\documentclass[12pt,toc,page,title]{article} 

\usepackage{setspace}
\usepackage{PRIMEarxiv}

\usepackage[utf8]{inputenc} % allow utf-8 input
\usepackage[T1]{fontenc}    % use 8-bit T1 fonts
\usepackage{hyperref}       % hyperlinks
\usepackage{url}            % simple URL typesetting
\usepackage{booktabs}       % professional-quality tables
\usepackage{amsfonts}       % blackboard math symbols
\usepackage{nicefrac}       % compact symbols for 1/2, etc.
\usepackage{microtype}      % microtypography
\usepackage{lipsum}
\usepackage{fancyhdr}       % header
\usepackage{graphicx}       % graphics
\graphicspath{{media/}}     % organize your images and other figures under media/ folder

\usepackage{graphicx}
\usepackage{subcaption}
\usepackage{mathptmx}      % use Times fonts if available on your TeX system
\usepackage[english]{babel}
\usepackage{appendix}
\usepackage{enumitem}
\usepackage{url}            % simple URL typesetting
\usepackage{booktabs}       % professional-quality tables
\usepackage{amsfonts}       % blackboard math symbols
\usepackage{nicefrac}       % compact symbols for 1/2, etc.
\usepackage{microtype}      % microtypography
\usepackage{lipsum}
\usepackage{amsthm}
\usepackage{amssymb}
\usepackage{amsmath}
\usepackage{mathptmx}
\usepackage{hyperref}       % hyperlinks
\usepackage{xcolor}
\usepackage{comment}
\usepackage{soul}
\usepackage{comment}

\usepackage{algorithm}
\usepackage[noend]{algpseudocode}
\makeatletter
\def\BState{\State\hskip-\ALG@thistlm}
\makeatother

\usepackage[english]{babel}
\DeclareMathOperator*{\esssup}{ess\,sup}

\DeclareMathOperator*{\argmin}{arg\,min}

\newcommand{\dd}{\mathrm{d}}

\newcommand{\1}{\mathbf{1}}

\newcommand{\ie}{\emph{i.e.}}

 \newtheorem{theorem}{Theorem}
 \newtheorem{lemma}{Lemma}
 \newtheorem{corollary}{Corollary}

 \newtheorem{definition}{Definition}

\usepackage[sort&compress]{natbib}
 \bibpunct[, ]{[}{]}{,}{n}{}{,}%
\title{Bayesian Risk Preference Persuasion
}

\author{
  Shutian Liu \\
  Department of Systems Engineering\\
  City University of Hong Kong\\
  \texttt{shutian.liu@cityu.edu.hk} \\
}

\begin{document}
\maketitle

\begin{abstract}
A decision-maker’s risk preference is inherently unstable and may adjust in response to external information, shaping subsequent choices and outcomes. This paper develops a persuasion framework to study how information can be designed to steer risk preferences and decision results. In our model, a receiver starts with an initial risk preference represented by a coherent risk measure and revises it after observing a system state generated by an information rule claimed by a sender. The revision must preserve time consistency of risk evaluations before and after the state realization. We characterize the sender’s optimal information design by analyzing the induced distribution of posterior beliefs over states. Each belief leads to specific preference revisions and corresponding conditional risk assessments. We identify conditions under which information design benefits the sender across several settings and illustrate the framework’s potential in risk management through an application to reinsurance design.
\end{abstract}

% keywords can be removed
\keywords{Risk preference persuasion, Risk revision, Information design, Reinsurance}

\section{Introduction}
\label{sec:intro}
The risk preference of a decision-maker (DM) is a foundational variable in risk management frameworks. For instance, when evaluating the theoretical fair value of a market asset, assuming a risk-neutral perspective is essential for objective pricing. Conversely, in safety-critical scenarios, DMs must adopt a strictly risk-averse posture. In these high-stakes environments, a robust, failure-resistant strategy is optimal, even if its conservative nature sacrifices substantial expected returns. Consequently, a key challenge of a risk manager is ensuring that executives possess and apply the appropriate risk preferences to match their specific operational scenarios.
This raises a critical question: is there a systematic way to steer a DM's risk preference so that their choices better align with the specific decision environment?

While the majority of the literature on decision-making under uncertainty adopt the setting in which the risk preferences are predetermined, evidences have indicated that risk preference lacks stability (see, e.g., \citet{tversky1990causes, berg2005risk, handel2015health}). 
Exogenous inputs, such as information, can fundamentally shift individual risk preferences and drive subsequent behavioral changes (see, e.g., \citet{barseghyan2011risk, nasioulas2026feedback, gandhi2017information}). With the rapid advancement of information technology and artificial intelligence, systematically decoding how risk preferences depend on information flows is increasingly vital. Uncovering these mechanisms is crucial to mitigating unintended behavioral volatility and integrating predictive behavioral insights directly into modern risk management frameworks.

Motivated by these observations, the main objective of this paper is to formally investigate how to design persuasive information to steer a DM's risk preference and shape the resulting behavior under uncertainty.
The main contributions of this paper are summarized as follows.
\begin{itemize}
    \item \textbf{Modeling framework}. We propose a new approach to study the design of information to shape the risk preference of a decision-maker. Our framework builds on the Bayesian persuasion model of \citet{kamenica2011bayesian} and the time-consistent decomposition of risk measures of \citet{pflug2016time} but deviates from conventional considerations. On the one hand, unlike the majority of previous works in Bayesian persuasion and information design, we explicitly focus on the influence of information on the risk preference of the receiver.
    On the other hand, instead of purely focusing on the analysis of the underlying axioms and properties of risk measures, we take the perspective of persuasive information construction to induce preferred preference revisions at a later decision stage. 
    To facilitate the investigation of preference persuasion, we introduce a system state parameter that serves as the intermediate variable linking the persuasive information signal and the uncertainty being evaluated.
    In particular, we adopt the setting where the joint probability distribution of the system state and the underlying uncertainty is incompletely known ex ante.
    Instead, we assume that it has to be constructed using the conditional probability distribution over the uncertainty at each system state and the posterior beliefs about system states induced by the persuasive signals.
    This feature highlights the systematic structure of uncertainty quantification where risk assessment is only feasible and meaningful conditional on a specific realization of the system state and knowledge about the probabilistic rule of the system state is captured by one's belief.
    With this model construction, we are able to describe how the information rule designed and claimed by the sender could shape the receiver's beliefs about the system states, hence illustrate how it induces the intended interim revisions of the receiver's risk evaluations. 
    Depending on whether the action of the receiver is explicitly incorporated into the sender's persuasion problem, we formulate preference persuasion per se and preference persuasion with actions.
    The first class of problems emphasizes how information design would steer risk preferences in the target direction, while the second class of problems is used to evaluate the end-to-end effect of preference persuasion. 
    \item \textbf{Theoretical analysis}. Our analysis of the sender's persuasive information design problem follows the strand of research but tailored to the context of preference persuasion. 
    For both the two classes of the sender's problem, we are interested in the questions of whether there exists a signal rule that achieves the sender's optimal design problem and when the sender can benefit from persuasion.
    For preference persuasion per se, in particular, we render the dual variable associated with risk measures that induces preference revisions as the ``action" of the receiver in the absence of her true action.
    This necessarily identifies the ``action" set as a continuum.
    We approach the existence by viewing this set as a parameterized constraint system and show that the sender's objective, as a function of belief, possesses lower semicontinuity.
    To address the second question, we develop a technique to bridge the transportation distance between marginal probability distributions and the transportation distance between those joint probability distributions that can be disintegrated in to these marginals and the same stochastic kernel. 
    Combining with the standard assumption of ``there is information that the sender would share", this technique helps derive the conditions under which the sender can increase or decrease the average risk associated with the receiver’s persuaded preferences. Thus, it determines when persuasion is beneficial to her.
    Our analysis differs from the literature where the benefit of sender is primarily analyzed under the setting of a finite set of the receiver's actions. 
    For preference persuasion with actions, we simplify the structure of the sender's problem by assuming that the optimal dual variables inducing preference revisions are unique.
    This simplification enables us to consider the preference adaptation and decision-making of the receiver as a whole and to follow standard techniques for analysis. 
    \item \textbf{Application}. Among the various applications compatible with our theoretical framework, we use reinsurance design as an illustrative example to highlight the potential of our method in risk management.
    To this end, we first extend the commonly used optimal reinsurance design model under average value-at-risk (AV@R) risk measures to its state-dependent counterpart. In this extension, optimal indemnity function is characterized under specific realizations of the system state based on conditional evaluation of risks. 
    We then grant the reinsurer the capability to design information about the states, making him the sender in our preference persuasion framework. 
    We investigate several parameter settings and identify critical distributions of persuaded beliefs to elaborate on when persuasive information may or may not assist reinsurance design. 
    Under the scenarios where the reinsurer is better off if he is a sender, we compute the optimal expected losses he can achieve and construct the corresponding signal rules that attain these values. 
    The adoption of preference persuasion enriches traditional reinsurance problems by introducing an additional degree of freedom of design to the reinsurer.
    This perspective has potential in further enhancing the risk-sharing mechanism and improving the resilience of the insurer-reinsurer contractual relationship.
\end{itemize}

%\subsection{Related works}
%\label{sec:related works}

Our methodology builds on a large body of work on information design and Bayesian persuasion.
Following the theoretical foundations of, for instance,  \citet{kamenica2011bayesian, rayo2010optimal, bergemann2016bayes}, various applications have been investigated to uncover the role of persuasive information.
For example, \citet{kerman2024persuading} examined how to persuade voters,  \citet{candogan2025value} studied information design from the perspective of a retailer in a supply chain,
and \citet{maitra2024optimal} considered optimal signaling for epidemic control.
This work is closely related to persuasion models involving a receiver who deviates from the standard rationality assumption, see, e.g., \citet{lipnowski2018disclosure}, \citet{beauchene2019ambiguous}, \citet{anunrojwong2024persuading}, and \citet{babichenko2022information}.
In particular, in \citet{anunrojwong2024persuading}, the authors highlighted the failure of revelation principal due to the consideration of risk conscious receiver and the adoption of belief coalescence instead of action coalescence for signal rule construction.
Alternative modeling choices include persuasion in non-Bayesian models by \citet{de2022non}, persuasion under quantal response to incorporate bounded rationality \citet{feng2024rationality}, and persuasion under approximate best-response investigated in 
\citet{chen2023persuading}.
Different from these existing works, we take the perspective from utilizing the instability of DM's risk preference and focus on how information design could induce preference shifts and result in intended behavioral outcomes. 
This adaptive description of the DM's risk preference has recently been investigated in \citet{liu2026games}, where the author has focused on the comparison of different equilibrium notions in incomplete information games but the information structure itself is fixed. 
In addition, \citet{cabrales2013entropy} have studied the value of purchasing of information to improve decision under uncertainty.
They focus on the receiver who proactively obtains information while we describe the incentives on the sender's side.

Our work also draws insights from the literature on decision-making under uncertainty, especially works in which a predefined risk preference of the DM is absent.
A non-exhaustive list of related work includes: \citet{wang2023preference} who extends the Anscombe–Aumann framework (\citet{anscombe1963definition}) and consider the weighted average of distortion risk measures at each state where the weights are determined by subjective probabilities; \citet{armbruster2015decision}, \citet{guo2022robust}, and
\citet{delage2022shortfall} who consider incomplete knowledge about the subjective risk preferences of DMs; and \citet{zhu2009worst} and
\citet{li2025randomization} who adopt random preferences for DMs due to distributional uncertainty. 
Our work is also related to the recent works of \citet{liu2025stackelberg} and \citet{liu2025mitigating}, where the authors have adopted the perspective of designing risk preferences in Stackelberg game models. However, they have made the assumptions that the risk preference of a DM always follows a designer's choice instead of being derived consistently based on information.

%\subsection{Paper organization}
%\label{sec:paper organization}
The rest of the paper is organized as follows.
In Section \ref{sec:framework}, we present the Bayesian risk preference persuasion framework by introducing the state and signal construction, the preference revision feature, and the sender's persuasion problems.
In Section \ref{sec:analysis:per se}, we ignore the action of the receiver and investigate the existence of optimal design of information and sufficient conditions for persuasion to shape the average risk preference of the receiver in targeted directions. 
A numerical example is also presented to illustrate the detailed procedure to perform preference persuasion per se. 
Analysis incorporating the receiver's action will be conducted in Section \ref{sec:analysis with action}.
Section \ref{sec:application} investigate the reinsurance application in detail to provide insights on how the proposed framework can assist risk management.
Finally, conclusion and discussions on future works are included in Section \ref{sec:conclusion}.

\section{Framework}
\label{sec:framework}

\subsection{Uncertainty, states, and signals}
\label{sec:states}
Consider a sender and a receiver interacting under uncertainty $\omega\in\Omega$.
To make the presentation centered around risks, we assume that both the sender and the receiver minimize losses instead of maximize gains.
Consider a finite state space $T$, whose realizations $t\in T$ influence the perceived probabilistic rules of the uncertainty.
Let $\Delta(\Omega)$ denotes the set of probability measures on $T$.
For each state $t$, consider the probability measure $P(\cdot|t)\in \Delta(\Omega)$ that encodes the distributional information of the uncertainty under that state. 
Assume that $P(\cdot|t)$ for all $t\in T$ is commonly known by both the sender and the receiver. 
We will also write $P_t$ for $P(\cdot|t)$ for notational simplicity.

Depending on whether a specific state $t$ is revealed or not, two stages are in place. 
The ex ante stage is where $t$ has not been revealed and information about states are summarized by a probability distribution $\mu\in \Delta(T)$, called a belief. 
The prior belief of the receiver at the beginning of the interaction is denoted $\mu_0\in \Delta(T)$, which is assumed to be commonly known.
Given $\mu$ and $P(\cdot|t)$ for all $t\in T$, a mixture distribution $Q:=\mu\circ P \in \Delta(\Omega)$ defined by
\begin{equation*}
\mu\circ P:=  \sum_{t\in T} P(\cdot|t)\mu(t)
\end{equation*}
can be used to describe information about the uncertainty $\omega$.
The interim stage is where a specific state $t$ is observed and one can resort to $P(\cdot|t)$ for probabilistic information about the uncertainty $\omega$.

While a state observation is only made later at the interim stage, a private signal $s\in S$ is available ex ante that encodes information about the states. 
Let $\pi(\cdot|t)$ denote a signaling rule such that $\pi(\cdot|t)\in \Delta(S)$ for all $t\in T$.
The signaling rule $\pi$, together with a signal observation $s$ selected based on the rule, are assumed to be known by the receiver ex ante. 
This information structure leads to the following belief update rule based on Bayes theorem.
For signal realization $s$, let $\mu_s\in\Delta(T)$ denote the posterior belief derived based on the prior belief $\mu_0$ and the signaling rule $\pi$ according to 
\begin{equation*}
    \mu_s(t):=\frac{\pi(s|t)\mu_0(t)}{\sum_{t'\in T}\pi(s|t')\mu_0(t')}, \forall s\in S, \forall t\in T.
\end{equation*}
Then, each signaling rule $\pi$ leads to a distribution over posterior beliefs $\eta\in \Delta(\Delta(T))$ defined as 
\begin{equation*}
    \eta(\mu):=\sum_{s:\mu_s=\mu}\sum_{t'\in T}\pi(s|t')\mu_0(t'), \forall \mu\in \Delta(T),
\end{equation*}
where the support of $\eta$ is denoted $\text{supp}(\eta)=\{\mu_s\}_{s\in S}$.
Throughout this paper, we assume that the sender and the receiver share the same belief.

A distribution of posterior beliefs is Bayes-plausible (see \citet{kamenica2011bayesian}) if the expectation of the posteriors with respect to that distribution equals the prior, \ie,
\begin{equation*}
    \sum_{\text{supp}(\eta)}\mu\eta(\mu)=\mu_0,
\end{equation*}
where $\text{supp}(\eta)$ denotes the support of $\eta$.

In the above specifications of states and signals, the parameter that directly influences the receiver's actions and losses is the uncertainty $\omega$.
This means that the persuasive effort of the sender, endowed in the signal rule $\pi$, indirectly affects the receiver via the information about the state $t$ it encodes. 
As the realization of a state determines the stage of decision-making, the corresponding criterion for the quantification of the uncertainty can be either unconditional and defined with respect to the mixture distribution, or conditional and defined with respect to a distribution under a specific state realization.
This setting builds on and extends standard specifications of information structures.

Next, we describe how the receiver evaluates the risks arising from uncertainty.

\subsection{Risk preference and its revision}
\label{sec:risk preferences}
In standard persuasion or information design models, both the sender and the receiver are assumed to be rational.
They assess random losses in a risk-neutral perspective.
We maintain the assumption that the sender is risk-neutral and he uses expectation to quantify the uncertainty in the loss function.
However, we introduce risk aversion to the receiver and assume that she enters the interaction with an initial risk preference represented by a law-invariant coherent risk measure (CRM) defined as follows.

We assume that the receiver's random loss $Y$ is contained in the space $\mathcal{L}^{\infty}(\Omega,\mathcal{F},Q)$ of all essentially bounded, $\mathbb{R}$-valued random variables on the probability space $(\Omega,\mathcal{F},Q)$. 
As illustrated by the notation, we will later identify $Q$ using the mixture distribution defined in Section \ref{sec:states}.
For now, we refer to $Q$ as a given reference probability measure associated with space $(\Omega, \mathcal{F})$. 
A risk functional $\rho$ is a mapping from  $\mathcal{L}^{\infty}(\Omega,\mathcal{F},Q)$ to $\mathbb{R}$.
The following axiomatic definition delineates the class of risk functionals admissible within the model considered in this paper.
We will subsequently specialize to a particular specification to further simplify our presentation.
See, for example, \citet{artzner1999coherent, follmer2025stochastic, ruszczynski2006optimization} for further discussions of widely used classes of risk measures.

\begin{definition}
\label{def:CRM}
(Law-invariant coherent risk measure).
A risk functional $\rho:\mathcal{L}^{\infty}(\Omega,\mathcal{F},Q)\rightarrow \mathbb{R}$ is a law-invariant CRM if it satisfies the following properties:
\\
(i) Monotonicity: $\rho(Y_1)\leq \rho(Y_2)$ if $Y_1\leq Y_2$ a.s.; 
\\
(ii) Convexity: $\rho ((1-\alpha)Y_1+\alpha Y_2)\leq (1-\alpha)\rho(Y_1)+\alpha \rho ( Y_2)$ for $0\leq \alpha \leq 1$;
\\
(iii) Translation invariance: $\rho(Y+c)=\rho(Y)+c$ for $c\in\mathbb{R}$;
\\
(iv) Positive homogeneity: $\rho(\alpha Y)=\alpha \rho(Y)$ for $\alpha>0$.
\\
(v) Law-invariance:
$\rho(Y_1)=\rho(Y_2)$ whenever $P(Y_1\leq y)=P(Y_2\leq y)$ for all $y\in\mathbb{R}$.
\end{definition}

\paragraph{Information-contingent preference revision.}

Unlike standard models in which risk preferences are assumed to remain fixed throughout the decision-making process, we allow for the possibility that preferences may be revised.
Our modeling assumption is as follows. 
The receiver revises her risk preference upon observing the state realization $t\in T$.
The revision is not arbitrary but is performed to ensure the time-consistency of risk evaluations across the ex ante and the interim stages.
This setting is enabled with the help of the extended conditional risk functional and the decomposition of risk measures introduced in \citet{pflug2016time}.
The definitions are presented  following the introduction of the dual representation of CRM useful for the constructions.

A coherent risk measure $\rho:\mathcal{L}^{\infty}(\Omega,\mathcal{F},Q)\rightarrow \mathbb{R}$ admits the following representation via the Fenchel-Moreau duality theorem (see, e.g., \citet{artzner1999coherent,ruszczynski2006optimization,follmer2025stochastic}):
\begin{equation}
    \rho(Y)=\sup \{ \mathbb{E}(YZ): Z\in\mathfrak{M}\subset\mathcal{L}^1(\Omega,\mathcal{F},Q) \},
    \label{eq:dual representation risk measure}
\end{equation}
where $\mathfrak{M}$ is the dual set consisting of density functions $Z(\cdot)$ that are absolutely continuous with respect to the reference probability measure $Q$ and satisfy that $Z$ is nonnegative , $Z(\Omega)=1$, and $\mathbb{E}(YZ)\leq \rho(Y)$ for all $ Y\in \mathcal{L}^{\infty}(\Omega,\mathcal{F},Q)$.

The extended conditional risk functional is defined with respect to a sub-sigma algebra $\mathcal{F}_\tau\subset\mathcal{F}$ as follows.
\begin{definition}
\label{def:extended conditional risk functionals}
(Extended conditional risk functional).
Let $\rho$ be a law-invariant coherent risk measure. 
For dual variables $Z_\tau$ measurable with respect to $\mathcal{F}_\tau$ that satisfies $Z_\tau\geq 0$, $\mathbb{E}(Z_\tau)=1$, and $\mathbb{E}(YZ_\tau)\leq \rho(Y)$ for all $Y\in \mathcal{L}^{\infty}(\Omega,\mathcal{F},Q)$, the extended conditional risk functional associated with $\rho$ is defined as
\begin{equation}
    \rho_{Z_\tau}(Y|\mathcal{F}_\tau):=
    \esssup\{\mathbb{E}(YZ')| Z'\in \mathfrak{M}_{Z_\tau}\subset \mathcal{L}^1(\Omega,\mathcal{F},Q) \},
    \label{eq:extended conditional risk functional definition}
\end{equation}
where $\mathfrak{M}_{Z_\tau}$ denotes the set of dual variables associated with $\rho_{Z_\tau}(L|\mathcal{F}_\tau)$ defined as
\begin{equation}
    \mathfrak{M}_{Z_\tau}:=
     \{Z':\mathbb{E}(Z'|\mathcal{F}_\tau)=\1,
Z'\geq 0,
\text{and } \mathbb{E}(YZ_\tau Z')\leq \rho(Y)   
\},
\end{equation}
for all $Y\in\mathcal{L}^{\infty}(\Omega,\mathcal{F},Q)$.
\end{definition}
The above definition has the following interpretation.
The sub-sigma algebra $\mathcal{F}_\tau$ represents additional information for uncertainty quantification that is not available when the evaluation is with respect to the sigma algebra $\mathcal{F}$.
Thus, the risk measure $\rho$ evaluates risks under incomplete information about the randomness and $\rho_{Z_\tau}$ serves as its natural extension when observations that provide information in $\mathcal{F}_\tau$ are made.

A consequence of extended conditional risk functionals lies in the following decomposition theorem, which enables time-consistent risk evaluations before and after obtaining additional information.
\begin{lemma}
\label{lemma:decomposition}
(Theorem 21 of \citet{pflug2016time}).
Let $\rho$ denote a law-invariant coherent risk measure. Then, the following holds
\begin{equation}
    \rho(Y)=\sup\mathbb{E}[Z_\tau \cdot \rho_{Z_\tau}(Y|\mathcal{F}_\tau)],
    \label{eq:decomposition of risk measure}
\end{equation}
where the supremum is among all $\mathcal{F}_\tau$-measurable dual variables $Z_\tau$ satisfying $Z_\tau\geq 0$, $\mathbb{E}(Z_\tau)=1$, and $\mathbb{E}(YZ_\tau)\leq \rho(Y)$ for all $Y\in \mathcal{L}^{\infty}(\Omega,\mathcal{F},Q)$.
\end{lemma}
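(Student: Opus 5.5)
We prove the decomposition identity \eqref{eq:decomposition of risk measure} by establishing the two inequalities separately. Both directions rest on one change of variables: a product $Z_\tau Z'$, with $Z_\tau$ admissible as in Lemma \ref{lemma:decomposition} and $Z'\in\mathfrak{M}_{Z_\tau}$, is an element of the dual set $\mathfrak{M}$ of the dual representation \eqref{eq:dual representation risk measure}. Conversely, every $Z\in\mathfrak{M}$ factors as such a product.

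\textbf{Upper bound.} Fix an admissible $Z_\tau$ and any $Z'\in\mathfrak{M}_{Z_\tau}$. Since $Z_\tau$ is $\mathcal{F}_\tau$-measurable, the tower property gives
\begin{equation*}
\mathbb{E}(YZ_\tau Z')=\mathbb{E}\big[Z_\tau\,\mathbb{E}(YZ'|\mathcal{F}_\tau)\big]\le \mathbb{E}\big[Z_\tau\,\rho_{Z_\tau}(Y|\mathcal{F}_\tau)\big].
\end{equation*}
Here the conditional expectation is identified with the $\mathcal{F}_\tau$-measurable random variable over which the essential supremum in \eqref{eq:extended conditional risk functional definition} is taken. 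To get the reverse inequality for this fixed $Z_\tau$, note that $\mathfrak{M}_{Z_\tau}$ is closed under pasting: if $Z'_1,Z'_2\in\mathfrak{M}_{Z_\tau}$ and $A\in\mathcal{F}_\tau$, then $Z'_1\1_A+Z'_2\1_{A^c}$ is again nonnegative, has conditional expectation $\1$, and satisfies the dominance constraint. The latter holds because $\mathbb{E}(YZ_\tau Z')$ over the pasted variable is a sum of two pieces, and each piece can be checked with the test functions $Y\1_A$ and $Y\1_{A^c}$. Pasting closedness makes the family $\{\mathbb{E}(YZ'|\mathcal{F}_\tau)\}$ upward directed. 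Hence the essential supremum is a monotone limit of a sequence in the family, and monotone convergence yields
\begin{equation*}
\mathbb{E}\big[Z_\tau\rho_{Z_\tau}(Y|\mathcal{F}_\tau)\big]=\sup_{Z'}\mathbb{E}(YZ_\tau Z').
\end{equation*}
By definition of $\mathfrak{M}_{Z_\tau}$, each $Z_\tau Z'$ satisfies $\mathbb{E}(YZ_\tau Z')\le\rho(Y)$. Taking suprema over $Z'$ and then over $Z_\tau$ gives the inequality ``$\ge$'' in $\rho(Y)\ge \sup\mathbb{E}[Z_\tau\rho_{Z_\tau}(Y|\mathcal{F}_\tau)]$.

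\textbf{Lower bound.} Take $Z\in\mathfrak{M}$ and set $Z_\tau:=\mathbb{E}(Z|\mathcal{F}_\tau)$. This variable is $\mathcal{F}_\tau$-measurable, nonnegative and has mean one. Admissibility, $\mathbb{E}(YZ_\tau)\le\rho(Y)$, is the step that needs law invariance. By Jensen, $Z_\tau$ is dominated by $Z$ in convex order. For a law-invariant coherent $\rho$ on $\mathcal{L}^\infty$ (with $Q$ atomless, or after the standard enlargement), the dual set $\mathfrak{M}$ is closed under conditional expectations. One sees this via the Kusuoka representation: $\rho(Y)=\sup_{\mu}\int \mathrm{AV@R}_\alpha(Y)\,\mu(d\alpha)$, and $\mathbb{E}(YZ)\le\rho(Y)$ for all $Y$ is equivalent to an integrated-quantile inequality on $Z$ that is preserved under convex-order decrease. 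Alternatively one can cite the standard result that $\sup_{Z\in\mathfrak{M}}$ is attained over a convex-order-closed set.

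Next define $Z':=Z/Z_\tau$ on $\{Z_\tau>0\}$ and $Z':=1$ elsewhere. Then $Z'\ge0$ and $\mathbb{E}(Z'|\mathcal{F}_\tau)=\1$. Moreover $Z_\tau Z'=Z$ almost surely, because $Z=0$ a.s.\ on $\{Z_\tau=0\}$. Hence $\mathbb{E}(YZ_\tau Z')=\mathbb{E}(YZ)\le\rho(Y)$, so $Z'\in\mathfrak{M}_{Z_\tau}$. The upper-bound computation then gives
\begin{equation*}
\mathbb{E}(YZ)\le \mathbb{E}\big[Z_\tau\rho_{Z_\tau}(Y|\mathcal{F}_\tau)\big],
\end{equation*}
and taking the supremum over $Z\in\mathfrak{M}$ in \eqref{eq:dual representation risk measure} gives the remaining inequality ``$\le$''.

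The main obstacle is the admissibility of $\mathbb{E}(Z|\mathcal{F}_\tau)$, that is, the convex-order closedness of $\mathfrak{M}$. This is the only place law invariance enters. I would prove it first through the AV@R/Kusuoka characterization, reducing it to the claim that $\int_\alpha^1 F^{-1}_{Z}(u)\,du$ decreases under conditional expectation, which is a known majorization fact. The pasting argument that justifies exchanging expectation and essential supremum is routine by comparison.
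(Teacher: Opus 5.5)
\textbf{Verdict: there is a genuine gap in your Upper bound part, and it cannot be closed for a general law-invariant coherent $\rho$.}

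The paper gives no proof of this lemma; it imports it from Pflug--Pichler. Your argument therefore has to stand on its own.

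\textbf{What is sound.} Your \emph{Lower bound} part, proving $\rho(Y)\le\sup\mathbb{E}[Z_\tau\rho_{Z_\tau}(Y|\mathcal{F}_\tau)]$, is correct. Conditioning $Z\in\mathfrak{M}$ on $\mathcal{F}_\tau$ and factoring $Z=Z_\tau Z'$ is the right move. Admissibility of $Z_\tau=\mathbb{E}(Z|\mathcal{F}_\tau)$ follows on an atomless space from the chain $\mathbb{E}(YZ_\tau)\le\int_0^1F_Y^{-1}F_{Z_\tau}^{-1}\,du\le\int_0^1F_Y^{-1}F_{Z}^{-1}\,du\le\rho(Y)$.

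\textbf{Where it breaks.} The failing step is your claim that $\mathfrak{M}_{Z_\tau}$ is closed under pasting. Splitting $\mathbb{E}(YZ_\tau Z'')$ over $A$ and $A^c$ and testing with $Y\1_A$ and $Y\1_{A^c}$ only gives $\mathbb{E}(YZ_\tau Z'')\le\rho(Y\1_A)+\rho(Y\1_{A^c})$. Subadditivity says this right-hand side is \emph{at least} $\rho(Y)$, so nothing follows. Without pasting, the family $\{\mathbb{E}(YZ'|\mathcal{F}_\tau):Z'\in\mathfrak{M}_{Z_\tau}\}$ need not be upward directed. Then $\mathbb{E}[Z_\tau\rho_{Z_\tau}(Y|\mathcal{F}_\tau)]$ can strictly exceed $\sup_{Z'}\mathbb{E}(YZ_\tau Z')$.

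\textbf{Why it cannot be repaired in general.} The inequality you need is false for general law-invariant coherent $\rho$. Here I read the ess sup in the definition as taken over $\mathbb{E}(YZ'|\mathcal{F}_\tau)$, as you do. Set up the example as follows:
\begin{itemize}
\item Take $Q$ Lebesgue on $[0,1]$, $A=[0,\tfrac12)$ and $\mathcal{F}_\tau=\sigma(A)$.
\item Take the spectral measure $\rho(Y)=\int_0^1 2u\,F_Y^{-1}(u)\,du$ and the admissible choice $Z_\tau\equiv 1$.
\item Take $Y=\1_B+\1_C$ with $B\subset A$, $C\subset A^c$ and $Q(B)=Q(C)=\tfrac1{10}$. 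Then $\rho(Y)=\int_{0.8}^1 2u\,du=0.36$.
\end{itemize}
By Hardy--Littlewood, every $Z'$ distributed as $2U$, with $U$ uniform, lies in $\mathfrak{M}$. Place the values $U\in(0.9,1]$ on $B$, the values $U\in[0.25,0.75]$ on $A^c$, and the rest on $A\setminus B$. Then $\mathbb{E}(Z'\1_{A^c})=2\int_{0.25}^{0.75}u\,du=\tfrac12$, so $\mathbb{E}(Z'|\mathcal{F}_\tau)=\1$. On $A$ this gives $\mathbb{E}(YZ'|\mathcal{F}_\tau)=2\int_{0.9}^1 2u\,du=0.38$. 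The mirror construction gives $0.38$ on $A^c$. Hence $\mathbb{E}[\rho_{\1}(Y|\mathcal{F}_\tau)]\ge 0.38>0.36=\rho(Y)$. Consistently with this, the pasted density fails the test function $Y$ itself: $0.38>0.36$. So the identity does not hold in the generality stated here.

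\textbf{What your argument does prove.} It proves the identity whenever $\mathfrak{M}$ is stable under $\mathcal{F}_\tau$-pasting at fixed $Z_\tau$. A key example is $\text{AV@R}_\alpha$, where $\mathfrak{M}_{Z_\tau}=\{Z'\ge 0:\ \mathbb{E}(Z'|\mathcal{F}_\tau)=\1,\ (1-\alpha)Z_\tau Z'\le\1\}$ is cut out by pointwise constraints. There pasting is immediate, both halves of your argument go through, and you obtain \eqref{eq:decomposition of avar}, which is the form the paper actually relies on later. For a general $\rho$, this pasting stability has to be added as a hypothesis.
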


We identify the elements used in defining the extended conditional risk measures and the decomposition using the settings presented in Section \ref{sec:states} to enable risk preference persuasion in Section \ref{sec:preference persuasion model}.
First, we identify a specific realization of the state $t$ with a random variable $\tau$ from $\Omega$ to $T$.
Then, by the assumption that $T$ is finite, $\mathcal{F}_\tau$ is explicitly generated by a finite partition of $\Omega$ for each state $t\in T$.
Hence, we can consider $P(\cdot|t)$ as a conditional probability measure with respect to $\mathcal{F}_\tau$ at a realization $t$.
Then, for any event in $\mathcal{F}$, the conditional probability of this event is an $\mathcal{F}_\tau$-measurable random variable. 
Since $T$ is finite, this probability can be expressed as the mixture of $P(\cdot|t)$ when the realization of $\tau$ is $t$. 
Finally, the probability of this event evaluated with respect to $\mathcal{F}$ will be represented by the mixture $Q$.
We will adhere to the notations depending on the realized state $t$ instead of the sub-sigma algebra in the rest of paper for notational convenience.

\paragraph{Specific forms under AV@R.}
Since the focus of this paper is on the persuasion of risk preferences, in the sequel, we adopt the following specific law-invariant CRM to simplify our presentation. 
The AV@R measure at confidence level $\alpha\in [0,1)$ is defined as
\begin{equation*}
    \text{AV@R}_\alpha(Y):=(1-\alpha)^{-1}\int_\alpha^{1}\text{V@R}_\gamma(Y)d\gamma,
\end{equation*}
where 
\begin{equation*}
    \text{V@R}_\alpha(Y):=\inf\{y:F_Y(y)\geq \alpha\},
\end{equation*}
and $F_Y(\cdot)$ is the cumulative distribution function of $Y$.
At level $\alpha=1$, define 
\begin{equation*}
    \text{AV@R}_1(Y):=\esssup (Y).
\end{equation*}
The AV@R measure is frequently used in the literature not only for its convenience in optimization but also for its role as a building-block for defining other classes of CRMs (see, e.g., \citet{shapiro2013kusuoka}).

As a law-invariant CRM, AV@R admits the following extended conditional version in terms of Definition \ref{def:extended conditional risk functionals}:
\begin{equation}
    \text{AV@R}_{\alpha,Z_t}(Y|t):=\text{AV@R}_{1-(1-\alpha)Z_t}(Y|t).
    \label{eq:avar at random level}
\end{equation}
The conditional risk measure (\ref{eq:avar at random level}) is referred to as conditional AV@R at random level in \citet{pflug2016time} and its time-consistent decomposition satisfies:
\begin{equation}
    \text{AV@R}_{\alpha}(Y)=\sup \mathbb{E}[Z_t\cdot \text{AV@R}_{1-(1-\alpha)Z_t}(Y|t)],
    \label{eq:decomposition of avar}
\end{equation}
where the supremum is over the set of $\mathcal{F}_\tau$-measurable density functions 
\begin{equation*}
    \mathcal{Z}(\mu):=\{Z_t, t\in T : \mathbb{E}(Z_t)=1, Z_t\geq 0, \text{ and } (1-\alpha)Z_t \leq \1\},
\end{equation*}
for all $\mu\in \Delta(T)$.
Note that the expectation constraint $\mathbb{E}(Z_t)=1$ in the definition of set $\mathcal{Z}(\mu)$ is with respect to the probability measure $\mu \in \Delta(T)$.
This probability measure will be clear from the contest in the rest of the paper as we will either define it explicitly or identify it with a mixture $Q$ that is induced by the considered $\mu$ and the fixed $P(\cdot|t)$.
We will denote by $\mathcal{Z}^*(\mu)$ the set of optimizers of (\ref{eq:decomposition of avar}).

\subsection{Risk preference persuasion}
\label{sec:preference persuasion model}
Having introduced receiver's risk preference and how it is revised contingent on observed information, we now proceed to formulate the Bayesian risk preference persuasion problem, which integrates the signaling effect and the preference revision feature. 
Note that the literature on persuasion and information design has largely focused on settings in which the ultimate objective of the sender is to influence the receiver’s action.
We emphasize that, within our framework, examining persuasion problems that target the receiver’s risk preferences directly is at least as meaningful, if not more so, than the conventional action-centered formulation.
Therefore, we investigate both scenarios in the optimization problems in the following and in their analysis in later sections. 

\paragraph{Preference persuasion per se.}
At the first glance, it may seem invalid to consider a persuasion problem if the action of the receiver is ignored.
However, due to the preference revision feature, the persuasion problem remains valid if the receiver is assumed to control the manner in which preference revisions occur.
In particular, the dual variables $\{Z_t\}_{t\in T}$, which are determined through the optimization problem (\ref{eq:decomposition of avar}) and induce the revised risk preferences $\text{AV@R}_{\alpha, Z_t}$ for $t\in T$, are assumed to be selected by the receiver based on her persuaded posterior beliefs. 
Then, the sender's problem can be formulated as 
\begin{equation}
    \begin{aligned}
        \min_{\eta, \{Z_t^*\}_{t\in T}} \quad &
        \mathbb{E}_{\eta}\mathbb{E}_{\mu}\text{AV@R}_{\alpha, Z_t^*}(Y|t) \\
        \text{s.t.} \quad &
        \sum_{\text{supp}(\eta)}\mu \eta(\mu)=\mu_0, \\
        & \{Z_t^*\}_{t\in T}\in \mathcal{Z}^*(\mu).
        \label{eq:persuasion without action}
    \end{aligned}
\end{equation}
In (\ref{eq:persuasion without action}), the sender, seeking to minimize the expectation of perceived risks evaluated according to the revised preferences, designs the distribution of the receiver's posterior beliefs $\eta$ such that the induced beliefs $\mu$ generate favorable risk preference revisions, as characterized by $\{Z_t^*\}_{t\in T}$.
The minimization in $\{Z_t^*\}_{t\in T}$ is in line with the consideration of the sender-preferred subgame perfect equilibrium in the literature (see, e.g., \citet{kamenica2011bayesian}).
The sender may also consider maximization in the above problem depending on specific application. 
We will discuss this further in the example in Section \ref{sec:example}.
Our analysis in later sections focuses on how the distribution of beliefs would change the revised preferences on average and does not limit to only minimization. 
Note that we adopt the setting of sender-preferred subgame perfect equilibrium throughout the discussions of the paper, as deviations from this solution concept significantly increase the complexity of the problems in terms of epistemic details. 
For example, first-order beliefs may not be sufficient for analyzing the persuasion problem. We refer the reader to \citet{mathevet2020information} for additional details.

%For a given random loss $Y$, we can define for a given risk preference $\rho(\cdot)$ and a mixture distribution $Q\in \Delta(\Omega)$ the optimal dual variable $Z$ such that $\rho(Y)=\mathbb{E}_Q[YZ]$.

\paragraph{Preference persuasion with actions.}
In this scenario, the receiver is understood as a decision-maker (DM) interested in selecting an action $a$ from a finite set of actions $A$.
The loss functions of the sender and the receiver, as suggested by previous settings, depends on both the receiver's action and the state. 
The sender's continuous loss function is denoted $v(a,t)$. 
The fact that $v$ is independent of the randomness $\omega$ reflects our modeling assumption that the sender has no control over an action that directly responds to the randomness and he is forced to rely on persuading the receiver on this matter. 
The receiver's action-dependent random loss is $X_a(\omega)$. 
Depending on whether the random loss is evaluated ex ante or at the interim stage, the resulting risk assessment may be state dependent.
In other words, when we evaluate $X_a$ conditionally, it is understood as a state dependent random loss.
We assume that this random loss is continuous in $a$ and $\omega$.
This notation facilitates us in dropping $\omega$ in the notation while still rendering the loss as random and in considering risk quantification conditional on a state $t$.
Nevertheless, one can regard this notation as defining $X_a(\omega)=l(a, \omega)$ for a given continuous loss function $l$.

Since risk can be evaluated both ex ante and interim, actions of the receiver are also feasible at the two decision stages.
Accordingly, the sender's incentive for persuasion may lie in shaping the expectation of average loss based on the actions taken at either stage.

Suppose that the sender's loss depends on the action taken by the receiver based on the average revised conditional risks.
Then, each belief leads to one action taken before state information is revealed, \ie, $a=a(\mu)$.
Sender's problem can be written as
\begin{equation}
    \begin{aligned}
        \min_{\eta, a(\mu)} \quad & \mathbb{E}_{\eta}\mathbb{E}_{\mu}[v(a(\mu),t)]
        \\
        \text{s.t.} \quad 
        &a(\mu)\in \argmin_{a\in A}
        \mathbb{E}_{\mu}\left[ \text{AV@R}_{\alpha,Z_t^*(\mu)}(X_a|t)\right],
        \\
        & \{Z_t^*\}_{t\in T}\in \mathcal{Z}^*(\mu),
        \\
        &
        \sum_{\text{supp}(\eta)}\mu \eta(\mu)=\mu_0.
        \label{eq:persuasion with one action per belief}
    \end{aligned}
\end{equation}

A variant of problem (\ref{eq:persuasion with one action per belief}) can also be considered.
If sender's loss depends on the receiver's state-dependent actions taken based on each revised conditional risk, we need to consider the collection of actions $\{a_t(\mu)\}_{t\in T}$.
Then, each state leads to a state-dependent action and each belief induces a profile of state-dependent actions.
Without loss of generality, we assume that $a_t\in A$ for all $t\in T$.
An example of this scenario is when the average action $\Bar{a}=\mathbb{E}_\mu[a_t(\mu)]$ enters the sender's loss function $v(\Bar{a}, t)$.
We will also use the notation $v(a, t)$ when the action is understood as the collection of $a_t$ and write $v(\{a_t(\mu)\}_{t\in T,},t)$
.
Note that $a_t(\mu)$ depends on $\mu$, as risk preference revision depends on the mixture distribution over $Q=\mu\circ P$ induced by belief $\mu$.
Sender's problem in this case can be formulated as 
\begin{equation}
    \begin{aligned}
        \min_{\eta, \{a_t(\mu)\}_{t\in T}} \quad & \mathbb{E}_{\eta}\mathbb{E}_{\mu}[v(\{a_t(\mu)\}_{t\in T},t)]
        \\
        \text{s.t.} \quad 
        &a_t(\mu)\in \argmin_{a_t\in A} 
        \text{AV@R}_{\alpha,Z_t^*(\mu)}(X_a|t), \forall t\in T
        \\
        & \{Z_t^*\}_{t\in T}\in \mathcal{Z}^*(\mu),
        \\
        &
        \sum_{\text{supp}(\eta)}\mu \eta(\mu)=\mu_0.
        \label{eq:persuasion with one action per state}
    \end{aligned}
\end{equation}
In problems (\ref{eq:persuasion with one action per belief}) and (\ref{eq:persuasion with one action per state}), the risk preference revision characterized by $Z_t^*$ can be understood as a side effect of the persuaded posterior beliefs.
However, this side effect enriches the design problem by introducing an additional degree of freedom in how induced beliefs operate, which the designer can exploit to achieve a more effective outcome.
We will illustrate this feature in the application presented in Section \ref{sec:application}.

Several remarks on the sender's problems (\ref{eq:persuasion with one action per belief}) and (\ref{eq:persuasion with one action per state}) are in order. 
First, we assume that problems (\ref{eq:persuasion with one action per belief}) and (\ref{eq:persuasion with one action per state}) differ only in the structure of the sender's loss but not in how the revised risk preferences of the receiver is obtained. 
Second, as we have discussed previously, the optimal dual variables $Z_t^*$ is dependent on the specific random loss vector considered in the dual representation.
Since actions of the receiver is not ignored, we assume that the revision is performed based on the optimal ex ante decision before observation of states is made (hence before revisions take place). 
When multiplicity of optimal ex ante actions occurs, the sender can pick any one that he prefers.
Note that this assumption is for the purpose of making the sender's problems well-defined. 
Analysis performed in later sections rely on the uniqueness of the optimal dual variables, which makes this consideration irrelevant.
Third, persuasion, or the signaling procedure, has to conclude before the reveal of states. 
Thus, we also assume that risk preference revision is performed with respect to ex ante action that is independent of states and is obtained based on the mixture distribution induced by the prior belief.
Note that this is partly due to our motivation that persuasion aims at manipulating the beliefs about the states, which are not effective after one observes specific realizations of states.  
There is no general rule on the selection of the random loss vector for determining preference revision.
Forth, one of the motivations for considering state-dependent actions in (\ref{eq:persuasion with one action per state}) lies in the following relation:
\begin{equation*}
    \begin{aligned}
        \mathbb{E}_\mu \Big[\min_{a_t\in A}\text{AV@R}_{\alpha, Z_t^*(\mu)}(X_{a_t}|t)\Big]
        \leq
        \min_{a\in A}\mathbb{E}_\mu \Big[\text{AV@R}_{\alpha, Z_t^*(\mu)}(X_{a}|t)\Big]
        \leq
        \min_{a\in A} \text{AV@R}_{\alpha}(X_a),
    \end{aligned}
\end{equation*}
where the last inequality follows from Theorem 20 of \citet{pflug2016time}.
This relation renders state-dependent actions preferred by the receiver.
Another motivation for the consideration lies in the fact that the one receiver can be seen as evolving to generate copies of herself in each state, with each copy having a specific objective function.
However, in standard persuasion problems, this feature is absent since the preference stays the same before and after the reveal of states.

\section{Preference persuasion per se}
\label{sec:analysis:per se}

\label{sec:analysis without action}
We first investigate sender's problem (\ref{eq:persuasion without action}) where the actual action taken by the receiver is ignored. 

\subsection{Existence of optimal signal rule}
\label{sec:analysis:multiplicity}
Given receiver's belief $\mu$, her preference revision is captured by 
\begin{equation}
    \sup_{Z_t} \mathbb{E}_{\mu}[Z_t\cdot \text{AV@R}_{\alpha, Z_t}(Y|t)],
    \label{eq:decomposition given mu}
\end{equation}
where the supremum is over the set $\mathcal{Z}(\mu):=\{Z_t, t\in T: \mathbb{E}[Z_t]=1, Z_t\geq 0, (1-\alpha)Z_t\leq \1\}$.
%Let $\mathcal{Z}^*(\mu)$ denote the set of optimizers $Z_t^*\subset \mathcal{Z}(\mu)$ for $t\in T$ of (\ref{eq:decomposition given mu}).
The set of optimizers  $\mathcal{Z}^*(\mu)$ is not a singleton in general (see \citet{pflug2016time}).

Define $f(\{Z_t\}_{t\in T}, \mu)=\mathbb{E}_\mu[Z_t\cdot \text{AV@R}_{\alpha, Z_t}(Y|t)]$.
Since $\text{AV@R}_\alpha(\cdot)$ is continuous in the confidence level $\alpha$, $f$ is continuous. 
The feasible set $\mathcal{Z}(\cdot)$ is a compact-valued correspondence such that $\mathcal{Z}(\mu)\neq \emptyset$ for all $\mu\in \Delta(T)$, since $\1\in \mathcal{Z}(\mu)$ for all $\mu$. 
Let $f^*(\mu)$ denote the value of $f$ evaluated at the optimizers of (\ref{eq:decomposition given mu}).
Let $\mathcal{Z}^*(\mu):=\{\{Z_t\}_{t\in T}\in \mathcal{Z}(\mu): f(\{Z_t\}_{t\in T}, \mu)=f^*(\mu)\}$.
The following property facilitates out analysis.

\begin{lemma}
\label{lemma:hemicontinuity of Z(mu)} 
The correspondence $\mathcal{Z}(\cdot)$ is continuous.
\end{lemma}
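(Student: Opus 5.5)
We view $\mathcal{Z}(\cdot)$ as a parameterized polytope. Since $T$ is finite, an $\mathcal{F}_\tau$-measurable dual variable is a vector $z=(z_t)_{t\in T}\in\mathbb{R}^{|T|}$, and
\begin{equation*}
\mathcal{Z}(\mu)=\Big\{z\in\mathbb{R}^{|T|}: \sum_{t\in T}\mu(t)z_t=1,\ 0\le z_t\le (1-\alpha)^{-1}\ \forall t\Big\}.
\end{equation*}
This is the intersection of the fixed box $B=[0,(1-\alpha)^{-1}]^{|T|}$ with the hyperplane $H(\mu)=\{z:\langle\mu,z\rangle=1\}$. It is nonempty because $\mathbf{1}\in\mathcal{Z}(\mu)$, and it is compact and convex. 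Continuity means upper and lower hemicontinuity, and we prove each separately.

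\textbf{Upper hemicontinuity.} All values lie in the compact set $B$, so it suffices to show the graph is closed. Take $\mu_n\to\mu$ and $z^n\in\mathcal{Z}(\mu_n)$ with $z^n\to z$. The box constraints are closed, so $z\in B$. The map $(\mu,z)\mapsto\langle\mu,z\rangle$ is continuous, so $\langle\mu,z\rangle=\lim_n\langle\mu_n,z^n\rangle=1$. Hence $z\in\mathcal{Z}(\mu)$. The closed graph theorem for correspondences into a compact Hausdorff space then gives upper hemicontinuity.

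\textbf{Lower hemicontinuity.} This is the main step. Fix $\mu_n\to\mu$ and $z\in\mathcal{Z}(\mu)$. We must build $z^n\in\mathcal{Z}(\mu_n)$ with $z^n\to z$. We use the interior point $\mathbf{1}$, which lies in $\mathcal{Z}(\nu)$ for every belief $\nu$. Set $c_n=\langle\mu_n,z\rangle\to 1$ and consider the segment $z^n=\lambda_n z+(1-\lambda_n)\mathbf{1}$. Its value is
\begin{equation*}
\langle\mu_n,z^n\rangle=\lambda_n c_n+(1-\lambda_n),
\end{equation*}
which equals $1$ for every $\lambda_n$ only if $c_n=1$. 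So the segment toward $\mathbf{1}$ alone does not restore the constraint, and a different correction is needed.

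\emph{Case $c_n\le 1$.} Take $z^n=\beta_n z$ with $\beta_n=1/c_n\ge 1$, so that $\langle\mu_n,z^n\rangle=1$. This can break the upper bound $(1-\alpha)^{-1}$. To repair it, replace the scaling by a convex combination with the vertex $w=(1-\alpha)^{-1}\mathbf{1}$ of $B$, which satisfies $\langle\mu_n,w\rangle=(1-\alpha)^{-1}\ge 1$. Set
\begin{equation*}
z^n=(1-\theta_n)z+\theta_n w,\qquad \theta_n=\frac{1-c_n}{(1-\alpha)^{-1}-c_n}\in[0,1].
\end{equation*}
Both $z$ and $w$ lie in $B$, which is convex, so $z^n\in B$, and $\langle\mu_n,z^n\rangle=1$ by the choice of $\theta_n$.

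\emph{Case $c_n>1$.} Use the vertex $0$ instead, with $\langle\mu_n,0\rangle=0<1$. Set $z^n=z/c_n$. This lies in $B$ because $c_n>1$, and it satisfies the constraint.

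In both cases $\theta_n\to0$ or $c_n\to1$, so $z^n\to z$. The degenerate case $\alpha=0$ is trivial, since then $B=\{\mathbf{1}\}\cap\cdots$ and $\mathcal{Z}(\mu)=\{\mathbf{1}\}$ for all $\mu$; it is handled separately because the denominator of $\theta_n$ vanishes.

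This uniform construction, which uses the fact that the hyperplane always separates the box vertices $0$ and $w$, is the step I expect to need the most care. Upper and lower hemicontinuity together give continuity of $\mathcal{Z}(\cdot)$, which is what later allows Berge's maximum theorem to be applied to $f^*$ and $\mathcal{Z}^*$.
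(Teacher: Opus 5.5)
Your dismissal of the case $\alpha=0$ is wrong, and the lemma actually fails there. For $\alpha=0$ the box is $B=[0,1]^{|T|}$, not $\{\mathbf{1}\}$. The constraints $z\le\mathbf{1}$ and $\langle\mu,z\rangle=1$ only force $z_t=1$ for $t\in\text{supp}(\mu)$; coordinates with $\mu(t)=0$ stay free in $[0,1]$. So $\mathcal{Z}(\mu)=\{\mathbf{1}\}$ holds only for full-support $\mu$, and lower hemicontinuity fails at every boundary belief. For instance, take $T=\{t_1,t_2\}$, $\mu=(1,0)$ and $\mu_n=(1-1/n,1/n)$. Then $(1,0)\in\mathcal{Z}(\mu)$, but $\mathcal{Z}(\mu_n)=\{\mathbf{1}\}$ for every $n$, so $(1,0)$ is not a limit of points of $\mathcal{Z}(\mu_n)$.

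You therefore cannot call $\alpha=0$ trivial. You must state $\alpha\in(0,1)$ as a hypothesis and note that the lemma, as stated without restriction on $\alpha$, is false at $\alpha=0$. Your own construction shows why. It needs the strict margin $\langle\mu_n,w\rangle-1=\alpha/(1-\alpha)>0$, which is the statement that $\mathbf{1}$ is a Slater point lying strictly inside $B$. Since the conclusion fails at $\alpha=0$, the paper's appeal to Example 5.10 of Rockafellar--Wets must also implicitly rely on this interiority condition.

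For $\alpha\in(0,1)$ your proof is correct and takes a different route from the paper's. The paper only cites Rockafellar--Wets: Example 5.8 (feasible-set mappings with closed constraints) for upper hemicontinuity and Example 5.10 (parametric convex constraints) for lower hemicontinuity. You instead give a self-contained argument. Upper hemicontinuity comes from a closed graph inside a fixed compact box. Lower hemicontinuity comes from an explicit correction of $z$ toward the vertex $0$ or $w=(1-\alpha)^{-1}\mathbf{1}$, with $\|z^n-z\|=\theta_n\|w-z\|$ and $\theta_n\le(1-c_n)(1-\alpha)/\alpha$. Your version costs a few more lines but makes the constants and the role of $\alpha>0$ explicit, which the citation hides. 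The false start with the segment toward $\mathbf{1}$ can simply be deleted.
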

\proof{Proof of Lemma \ref{lemma:hemicontinuity of Z(mu)}.} 
We first show that $\mathcal{Z}(\cdot)$ is upper hemicontinuous.
By definition, $\mathcal{Z}(\mu)$ is a feasible set mapping.
Its domain is the set of probability distributions $\Delta(T)$, which is closed.
The constraints $Z_t\geq 0$ and $(1-\alpha)Z_t\leq \1$ define a closed subset. 
The requirement $\mathbb{E}_{\mu}[Z_t]=1$ is a continuous equality constraint. 
Hence, by Example 5.8 in \citet{rockafellar1998variational}, $\mathcal{Z}(\cdot)$ is upper hemicontinuous.
For lower hemicontinuity, observe that $\mathcal{Z}(\cdot)$ is represented by parameterized convex (linear) constraints. 
Then, by Example 5.10 in \citet{rockafellar1998variational}, $\mathcal{Z}(\cdot)$ is lower hemicontinuous. 
Therefore, $\mathcal{Z}(\cdot)$ is a continuous correspondence. 

\endproof

Then, since $f$ is continuous and $\mathcal{Z}(\cdot)$ is a continuous correspondence, Berge's theorem indicates that $f^*$ is continuous and $\mathcal{Z}^*(\cdot)$ is upper hemicontinuous with nonempty and compact values.

Due to the bi-level structure of the sender's problem (\ref{eq:persuasion without action}), a non-singleton set $\mathcal{Z}^*$ may lead to non-existence of solutions to the sender's problem.
This multiplicity issue is often addressed by adopting sender preferred subgame perfect equilibrium (see \citet{kamenica2011bayesian}), which allows the sender perform selection in his favor. 

For belief $\mu$, let $v(\mu):=\mathbb{E}_{\mu}[\text{AV@R}_{\alpha, Z_t^*}(Y|t)]$ denote sender's expected loss given $\{Z_t^*\}_{t\in T}$ under this belief. 
Sender preferred subgame perfection is enabled by adopting the following selection
\begin{equation}
    \hat{v}(\mu):=\min_{\{Z_t^*\}\in \mathcal{Z}^*(\mu)} \mathbb{E}_{\mu}[\text{AV@R}_{\alpha, Z_t^*}(Y|t)],
    \label{eq:sender preferred selection}
\end{equation}
which is well defined due to the nonemptyness and compactness of $\mathcal{Z}^*(\mu)$ and continuity of $\text{AV@R}$ in its confidence level.
Let the optimizer of (\ref{eq:sender preferred selection}) be denoted $\{\hat{Z}_t^*\}_{t\in T}$.
This selection criterion indicates that when the receiver is indifferent between optimal risk adjustments in $\mathcal{Z}^*(\mu)$, the sender determines which adjustment will be adopted. 
If the sender is also indifferent between alternatives in problem (\ref{eq:sender preferred selection}), he can use arbitrary tie-breaking rules. 
This setting leads to the following result, which is analogous to the one established in \citet{kamenica2011bayesian}.

\begin{lemma}
\label{lemma: v hat lower semi}
$\hat{v}(\cdot)$ is lower semicontinuous.
\end{lemma}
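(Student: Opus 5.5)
The plan is to write $\hat{v}$ as a parametric minimum over the optimizer correspondence $\mathcal{Z}^*(\cdot)$, in the form
\begin{equation*}
\hat{v}(\mu)=\min_{\{Z_t\}\in \mathcal{Z}^*(\mu)} g(\{Z_t\}_{t\in T},\mu), \qquad g(\{Z_t\}_{t\in T},\mu):=\mathbb{E}_\mu\big[\text{AV@R}_{1-(1-\alpha)Z_t}(Y|t)\big],
\end{equation*}
and then apply the ``lower half'' of Berge's maximum theorem. That half says a minimum of a lower semicontinuous (here continuous) objective over an upper hemicontinuous, nonempty, compact-valued correspondence is lower semicontinuous. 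Compared with Kamenica--Gentzkow, the finite action set is replaced by the continuum of dual variables.

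The ingredients are as follows.

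\begin{itemize}
\item \textbf{Continuity of $g$.} Since $T$ is finite, $g$ is a finite sum $\sum_{t}\mu(t)\,\text{AV@R}_{1-(1-\alpha)Z_t}(Y|t)$. Each $Z_t$ lies in $[0,(1-\alpha)^{-1}]$, so each level $1-(1-\alpha)Z_t$ lies in $[0,1]$. For bounded $Y$, the map $\beta\mapsto \text{AV@R}_\beta(Y|t)$ is continuous on $[0,1)$, and this is the continuity the paper already invokes. Hence $g$ is jointly continuous in $(\{Z_t\},\mu)$ on a compact set.
\item \textbf{Properties of $\mathcal{Z}^*(\cdot)$.} By Lemma \ref{lemma:hemicontinuity of Z(mu)} and Berge's theorem, as noted just before the statement, $\mathcal{Z}^*(\cdot)$ is upper hemicontinuous with nonempty compact values. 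Because the ambient space $\prod_t[0,(1-\alpha)^{-1}]\times\Delta(T)$ is compact, this is equivalent to $\mathcal{Z}^*$ having a closed graph.
\end{itemize}

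The direct argument then runs as follows.

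1. Take $\mu_n\to\mu$. Pass to a subsequence $\mu_{n_k}$ realizing $\liminf_n \hat{v}(\mu_n)$.

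2. Pick the sender-preferred selections $\hat{Z}^{(k)}\in\mathcal{Z}^*(\mu_{n_k})$, so that $\hat{v}(\mu_{n_k})=g(\hat{Z}^{(k)},\mu_{n_k})$.

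3. By compactness, extract a further subsequence with $\hat{Z}^{(k)}\to \bar Z$.

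4. The closed graph gives $\bar Z\in\mathcal{Z}^*(\mu)$.

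5. Continuity of $g$ then yields
\begin{equation*}
\liminf_n \hat{v}(\mu_n)=\lim_k g(\hat{Z}^{(k)},\mu_{n_k})=g(\bar Z,\mu)\ge \hat{v}(\mu).
\end{equation*}

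This is exactly lower semicontinuity.

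The main obstacle is the endpoint $\beta=1$, where $Z_t=0$ and the convention sets the level to $1$ with $\text{AV@R}_1=\esssup$. There, continuity of $\beta\mapsto\text{AV@R}_\beta$ is only guaranteed from below: $\text{AV@R}_\beta(Y)\uparrow \esssup Y$ as $\beta\uparrow 1$. I would handle this in two ways.

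\begin{itemize}
\item First, note that the term $\text{AV@R}_{\cdot}(Y|t)$ is multiplied by $Z_t$ inside $f$, so $f$ is unaffected at $Z_t=0$.
\item Second, for $g$ it suffices that $g$ be lower semicontinuous, not continuous, since only the inequality in step 5 is needed. Monotonicity of $\text{AV@R}_\beta$ in $\beta$, together with the limit $\esssup$ at $\beta\to1$, gives $\liminf \text{AV@R}_{\beta_k}\ge$ the limiting value whenever $\beta_k\to\beta$. So $g$ is at least lower semicontinuous, and the argument goes through.
\end{itemize}

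A subtler point is that for a state $t$ with $\mu(t)=0$, $Z_t$ is essentially unconstrained in $\mathcal{Z}(\mu)$. In step 4, however, the closed-graph property is taken as given from the earlier Berge argument, and these coordinates have zero weight in $g$. So no extra care is needed beyond using that established fact.
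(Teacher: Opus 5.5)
Your proof is correct. It uses the same ingredients as the paper: Berge's theorem, applied through the continuity of $\mathcal{Z}(\cdot)$, to get an upper hemicontinuous, nonempty, compact-valued $\mathcal{Z}^*(\cdot)$; continuity of $\text{AV@R}$ in its level; and $\hat{v}(\mu)$ as the sender-preferred minimum over $\mathcal{Z}^*(\mu)$. Your direct sequential closed-graph argument makes rigorous what the paper's Kamenica--Gentzkow-style sketch (``a discontinuity forces $\mathcal{Z}^*(\mu)$ to be non-singleton, and $\hat{v}(\mu)$ is the minimum over it'') leaves implicit, namely that any jump of $\hat{v}$ can only be upward. Your endpoint worry is unnecessary: $\text{AV@R}_\beta(Y|t)\uparrow\text{AV@R}_1(Y|t)$ as $\beta\uparrow 1$, so the level map is continuous on all of $[0,1]$ and $g$ is genuinely continuous.
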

\proof{Proof of Lemma \ref{lemma: v hat lower semi}.} 
Suppose that $\hat{v}$ is discontinuous at some $\mu\in\Delta(T)$. Observe that $\hat{v}(\mu)=\mathbb{E}_{\mu}[\text{AV@R}_{\alpha, \hat{Z}_t^*}(Y|t)]$ and $\text{AV@R}$ is continuous in its confidence level parameter. By Berge's theorem, the receiver must be indifferent between a set of risk adjustments at $\mu$, \ie, $\mathcal{Z}^*(\mu)$ is not a singleton. By definition, however, $\hat{v}(\mu)=\min_{\{Z_t^*\}\in \mathcal{Z}^*(\mu)} \mathbb{E}_{\mu}[\text{AV@R}_{\alpha, Z_t^*}(Y|t)]$. Hence, $\hat{v}$ is lower semicontinuous.
\endproof

The above lower semicontinuity property directly leads to the following existence result.
\begin{corollary}
The value of an optimal signal is 
\begin{equation*}
\begin{aligned}
    \min_{\eta} \mathbb{E}_{\eta}&\hat{v}(\mu)
    \\
    \text{s.t. } \sum_{\text{supp}(\eta)}&\mu\eta(\mu)=\mu_0.
\end{aligned}
\end{equation*}
\end{corollary}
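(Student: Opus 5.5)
The plan follows the Kamenica--Gentzkow argument: reduce the sender's problem (\ref{eq:persuasion without action}) to a problem over Bayes-plausible distributions of posteriors, and then get existence of a minimizer from Lemma \ref{lemma: v hat lower semi}.

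\textbf{Step 1 (reduction).} First we show that (\ref{eq:persuasion without action}) is equivalent to minimizing $\mathbb{E}_\eta \hat{v}(\mu)$ over Bayes-plausible $\eta$.
\begin{itemize}
\item Fix $\eta$. The inner choice of $\{Z_t^*\}\in\mathcal{Z}^*(\mu)$ is made separately for each posterior $\mu\in\text{supp}(\eta)$.
\item Under the sender-preferred selection, the sender picks the minimizer in (\ref{eq:sender preferred selection}) at each $\mu$. So the inner value equals $\mathbb{E}_\eta\hat v(\mu)$.
\item Every signal $\pi$ induces a Bayes-plausible $\eta$ by the law of total probability.
\item Conversely, every Bayes-plausible $\eta$ with finite support is induced by some $\pi$. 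Set $S=\text{supp}(\eta)$ and $\pi(\mu|t)=\mu(t)\eta(\mu)/\mu_0(t)$; Bayes' rule then returns posterior $\mu$. This is Proposition 1 of \citet{kamenica2011bayesian}.
\item By Carath\'eodory's theorem applied in $\Delta(T)$, it suffices to consider supports of size at most $|T|$.
\end{itemize}

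\textbf{Step 2 (convex-envelope form).} Let $\hat V(\mu_0)=\inf\{\mathbb{E}_\eta\hat v(\mu): \eta \text{ Bayes-plausible}\}$. This is the lower convex envelope of $\hat v$ evaluated at $\mu_0$.

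\textbf{Step 3 (the infimum is attained).} This step uses Lemma \ref{lemma: v hat lower semi} and compactness.
\begin{itemize}
\item Parametrize candidates by $(\mu^1,\dots,\mu^{|T|},\lambda)\in\Delta(T)^{|T|}\times\Delta(\{1,\dots,|T|\})$ subject to $\sum_i\lambda_i\mu^i=\mu_0$. This set is compact (a closed subset of a compact set) and nonempty, since the no-information choice $\mu^i=\mu_0$ is feasible.
\item The objective is $\sum_i\lambda_i\hat v(\mu^i)$. Each $\hat v$ is lower semicontinuous and bounded: $\hat v$ is an average of AV@R values of the bounded $Y$.
\item The product $\lambda_i\hat v(\mu^i)$ needs care. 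Shift $\hat v$ by a constant so that it is nonnegative; the constraint $\sum_i\lambda_i=1$ means the shift adds only a constant to the objective.
\item For nonnegative lower semicontinuous $g$ and $\lambda_i\ge0$, the map $(\lambda_i,\mu^i)\mapsto\lambda_i g(\mu^i)$ is lower semicontinuous. At points with $\lambda_i>0$ this follows from $\liminf$ of products of a convergent positive sequence with an lsc sequence. At points with $\lambda_i=0$ the value is $0$, which lies below all nearby values because $g\ge 0$.
\item A lower semicontinuous function on a compact set attains its minimum (Weierstrass), so the minimum exists.
\end{itemize}

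\textbf{Step 4 (back to a signal).} The minimizer is mapped back to a signal via Step 1. This shows the value of an optimal signal is the stated minimum, with $\min$ justified.

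The main obstacle I expect is Step 3: establishing lower semicontinuity of the weighted objective on the compact parameter set, including the degenerate case where some weights $\lambda_i$ vanish. The boundedness shift is how I would handle it.
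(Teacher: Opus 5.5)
Your proposal is correct and follows the paper's route. You use the Kamenica--Gentzkow reduction to Bayes-plausible $\eta$ under the sender-preferred selection $\hat v$, then get attainment from the lower semicontinuity of $\hat v$ (Lemma~\ref{lemma: v hat lower semi}). Your Step~3 compactness/Weierstrass argument supplies the justification the paper leaves implicit when it asserts that, because $\hat v$ is lower semicontinuous, $V(\mu_0)$ is attained by a convex combination of points on the graph of $\hat v$.

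One small repair is needed in Step~1. Plain Carath\'eodory in $\Delta(T)$ re-expresses $\mu_0$ with new weights and can raise the objective. Use one of these instead:
\begin{itemize}
\item A basic optimal solution of the linear program $\min_{\lambda\geq 0}\{\sum_i\lambda_i\hat v(\mu^i):\sum_i\lambda_i\mu^i=\mu_0\}$ over the finitely many support points. It has $|T|$ equality constraints, hence at most $|T|$ positive weights, and its value is no larger than the original.
\item Carath\'eodory applied to the lifted points $(\mu^i,\hat v(\mu^i))$. This preserves the value with at most $|T|+1$ points, which serves Step~3 equally well.
\end{itemize}
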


Following \citet{kamenica2011bayesian}, the following approach can be adopted to identify the optimal value of sender's problem.
Let $\text{epi}(\hat{v})$ denote the epigraph of $\hat{v}$.
Let $\text{cov}(\text{epi}(\hat{v}))$ denote the convex hull of the epigraph.
Note that since the sender is a minimizer according to our modeling assumption, here we resort to the epigraph instead of the hypograph.
For all $\mu\in\Delta(T)$, let $V(\cdot)$ denote the largest convex function that is dominated by $\hat{v}$:
\begin{equation}
    V(\mu):=\inf\{b:(\mu, b) \in \text{cov}(\text{epi}(\hat{v}))\}.
    \label{eq:V(mu)}
\end{equation}
The convexity of $V$ follows by its definition.
Since $\hat{v}$ is lower semicontinuous, any element of the graph of $V$ can be expressed as a convex combination of elements of the graph of $\hat{v}$.
This leads to the existence of a Bayes-plausible distribution of posterior beliefs $\eta$ that attains sender's optimal value $\mathbb{E}_{\eta}\hat{v}(\mu)=V(\mu_0)$.
Accordingly, with the knowledge of $\eta$ and $\mu_0$, the optimal signal rule can be constructed as 
\begin{equation}
    \pi(s|t)=\frac{\mu_s(t)\eta(\mu_s)}{\mu_0(t)}.
    \label{eq:optimal signal rule}
\end{equation}
In the sequel, we will occasionally write $\eta_s$ for $\eta(\mu_s)$.

\subsection{Effects of persuasion on average risk}
\label{sec:analysis increase average risk}

We aim to investigate under what condition can a signal rule $\pi(\cdot|t)$, or the induced distribution of posterior beliefs $\eta$, decrease the average risk under revised preferences evaluated ex ante. 
This question is in the same spirit as asking when can the sender benefit from performing persuasion, which is one of the central questions in the information design literature.
However, our focus on influencing the receiver’s risk preferences gives rise to two key distinctions.
The first difference concerns the objective.
We seek to determine whether revised risk preferences can benefit the sender, whereas standard persuasion models focus primarily on inducing changes in actions.
The second difference is technical in nature.
If we regard the optimal dual variables $\{Z_t^*\}_{t\in T}$ that induce preference revisions as the ``actions" of the receiver, then they lie in a continuous set.
Accordingly, it is not feasible to locate a neighborhood of a belief in which the induced ``actions" stay the same. 
Without this property, the standard approach to establishing benefits for the sender does not apply in our setting.
Thus, we need to resort to a different strategy to derive the intended outcome.

As in the literature, we also impose the condition that “there is information that the sender would share.”
In our setting, this requires the existence of a belief $\mu\in\Delta(T)$ such that 
\begin{equation}
    \mathbb{E}_\mu[\text{AV@R}_{\alpha, Z_t^*(\mu)}(Y|t)]
    <
    \mathbb{E}_{\mu}[\text{AV@R}_{\alpha, Z_t^*(\mu_0)}(Y|t)]-\epsilon,
    \label{eq:information sender would share}
\end{equation}
where we use $Z_t^*(\mu)$ and $Z_t^*(\mu_0)$ to denote optimal solutions to (\ref{eq:decomposition given mu}) given beliefs $\mu$ and $\mu_0$, and $\epsilon>0$ is a positive constant.
Condition (\ref{eq:information sender would share}) means that if there is private information that makes the sender to believe $\mu$, he would prefer to share it with the receiver rather than holding it privately.

If there is no information that the sender would share, \ie, condition (\ref{eq:information sender would share}) does not hold for any choice of positive constant $\epsilon$, then the sender cannot benefit from preference persuasion.
This claim follows from the fact that, for a signal rule $\pi$ that induces $\eta$, it generates sender's value
\begin{equation*}
\begin{aligned}
         \sum_{s\in S} \eta_s
        \left( \mathbb{E}_{\mu_s}[\text{AV@R}_{\alpha, Z_t^*(\mu_s)}(Y|t)] \right)
        &\geq  
        \sum_{s\in S} \eta_s
        \left( \mathbb{E}_{\mu_s}[\text{AV@R}_{\alpha, Z_t^*(\mu_0)}(Y|t)] \right) 
        \\
        &= 
        \mathbb{E}_{\mu_0}[\text{AV@R}_{\alpha, Z_t^*(\mu_0)}(Y|t)],
\end{aligned}
\end{equation*}
where the last equality is due to Bayes-plausibility.

Condition (\ref{eq:information sender would share}) alone is not sufficient to establish benefit of the sender.
By Bayes plausibility, the prior belief must be a convex combination of the induced posterior beliefs. 
Consequently, every posterior belief that benefits the sender in the sense of condition (\ref{eq:information sender would share}) must be paired with (at least) one other belief so that their average coincides with the prior.
See, e.g., \citet{koessler2022interactive}, for in-depth discussions on the splitting of the prior belief.
However, this accompanying belief need not itself be one that the sender would choose to disclose in general.
Therefore, we aim to quantify the deviation in the sender’s objective induced by this accompanying belief, so that any gain from information the sender would share outweighs the potential negative effect it entails.

The following constructions are helpful to us. 
\begin{definition}
\label{def:wasserstein}
Let $W_1: \Delta(\Omega)\times \Delta(\Omega)\rightarrow \mathbb{R}$ denote the order-$1$ Wasserstein distance between two probability measures in $\Delta(\Omega)$ defined for $Q$ and $Q'$ as
\begin{equation}
    W_1(Q, Q'):=\inf_{\Lambda}\iint_{\Omega\times\Omega}\dd(\omega,\omega')\Lambda(d\omega, d\omega'),
    \label{eq: W1 distance}
\end{equation}
where $\Lambda$ lies in the set of all joint probability measures in $\Delta(\Omega\times\Omega)$ having marginals $Q$ and $Q'$. The choice of $\Lambda$ that attains the infimum of (\ref{eq: W1 distance}) is referred to as the optimal transport plan.
\end{definition}
For $\mu$ and $\mu'$ in $\Delta(T)$, we will also denote their order-$1$ Wasserstein distance as $W_1(\mu, \mu')$, which is defined in a similar way as in Definition \ref{def:wasserstein}.
In addition, we use $\dd_{TV}(\cdot, \cdot)$ to denote the total variation distance between two probability measures on the same measurable space.

In the following, we derive an upperbound on $W_1(Q, Q')$ in terms of $W_1(\mu, \mu')$ when $Q$ and $Q'$ are mixture distributions defined by $Q=\mu\circ P$ and $Q'=\mu'\circ P$.
Our method is inspired by \citet{lin2023integrated} where the relation between the Wasserstein distance of joint probability measures and the Wasserstein distance of stochastic kernels are derived given a fixed mutual marginal.
However, in our setting, the stochastic kernel is fixed while the marginals are allowed to be different.

We first present a technical result. 
Let $\Phi:=\mu\otimes P$ denote the joint probability measure on $\Delta(T\times \Omega)$ defined as 
\begin{equation*}
   \Phi(\{t\}, A)=\mu(t)P(A|t), \forall t\in T \text{ and measurable set } A\subseteq \Omega.
\end{equation*}

\begin{lemma}
\label{lemma: W(Q, Q')<W(mu,mu')+ const}
For $\mu, \mu' \in \Delta(T)$, let $Q=\mu\circ P$ and $Q'=\mu'\circ P$ denote the mixture distributions. 
Let $G(\mu, \mu'):=\int_{T\times T}W_1(P(\cdot|t), P(\cdot|t'))\Gamma_{\mu, \mu'}^*(dt, dt')$, where $\Gamma_{\mu, \mu'}^*\in \Delta(T\times T)$ denotes the optimal transport plan of associated with marginals $\mu$ and $\mu'$.
Then, $W_1(Q, Q')\leq W_1(\mu, \mu') + G(\mu, \mu')$.
\end{lemma}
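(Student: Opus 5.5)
The plan is a direct coupling (gluing) argument. We build an explicit transport plan between $Q=\mu\circ P$ and $Q'=\mu'\circ P$ out of two pieces: the optimal plan $\Gamma^*_{\mu,\mu'}$ on $T\times T$, and, for each pair $(t,t')$, an optimal plan between the kernels $P(\cdot|t)$ and $P(\cdot|t')$. Since $T$ is finite, every integral over $T\times T$ is a finite sum, so no measurable-selection issues arise in pairing states.

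\textbf{Step 1 (the coupling).} For every pair $(t,t')\in T\times T$, let $\Lambda_{t,t'}\in\Delta(\Omega\times\Omega)$ be an optimal transport plan with marginals $P(\cdot|t)$ and $P(\cdot|t')$, so that
\begin{equation*}
\iint \dd(\omega,\omega')\,\Lambda_{t,t'}(d\omega,d\omega')=W_1(P(\cdot|t),P(\cdot|t')).
\end{equation*}
Then define
\begin{equation*}
\Lambda:=\sum_{t,t'\in T}\Gamma^*_{\mu,\mu'}(t,t')\,\Lambda_{t,t'}.
\end{equation*}

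\textbf{Step 2 (marginals).} The first marginal of $\Lambda$ is $\sum_{t}\big(\sum_{t'}\Gamma^*_{\mu,\mu'}(t,t')\big)P(\cdot|t)=\sum_t\mu(t)P(\cdot|t)=Q$. Symmetrically, the second marginal is $Q'$. Hence $\Lambda$ is admissible in Definition \ref{def:wasserstein}.

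\textbf{Step 3 (cost bound).} By linearity of the integral,
\begin{equation*}
W_1(Q,Q')\le \iint \dd\,d\Lambda=\sum_{t,t'}\Gamma^*_{\mu,\mu'}(t,t')\,W_1(P(\cdot|t),P(\cdot|t'))=G(\mu,\mu').
\end{equation*}
Since $W_1(\mu,\mu')\ge 0$, the claimed inequality $W_1(Q,Q')\le W_1(\mu,\mu')+G(\mu,\mu')$ follows at once. This suggests the $W_1(\mu,\mu')$ term is slack in the stated bound. It may instead reflect a bound on the joint measures $\Phi=\mu\otimes P$ and $\Phi'=\mu'\otimes P$ under a sum metric on $T\times\Omega$. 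The same coupling, lifted to $(t,\omega,t',\omega')$, gives cost $\sum\Gamma^*(t,t')\big[d_T(t,t')+W_1(P_t,P_{t'})\big]=W_1(\mu,\mu')+G(\mu,\mu')$. Combined with the fact that projecting onto $\Omega$ does not increase $W_1$, this route reproduces the stated form exactly, so I would present it as a remark.

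\textbf{Main obstacle.} The only delicate point is the existence of optimal plans $\Lambda_{t,t'}$, and of $\Gamma^*_{\mu,\mu'}$ itself. For finite $T$, the latter is a linear program and poses no difficulty. For $\Lambda_{t,t'}$, existence requires, for example, that $\Omega$ be Polish with a lower semicontinuous metric cost. To avoid imposing such assumptions, I would instead take $\varepsilon$-optimal plans $\Lambda^\varepsilon_{t,t'}$, which always exist by the definition of the infimum. This yields $W_1(Q,Q')\le G(\mu,\mu')+\varepsilon$, and letting $\varepsilon\downarrow 0$ concludes the proof.
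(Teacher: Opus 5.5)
Your proof is correct, but it takes a more direct route than the paper.

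The paper works with the joint laws $\Phi=\mu\otimes P$ and $\Phi'=\mu'\otimes P$ on $T\times\Omega$. It first shows $W_1(Q,Q')\le W_1(\Phi,\Phi')$ by projecting an optimal plan for $(\Phi,\Phi')$ onto $\Omega\times\Omega$. It then bounds $W_1(\Phi,\Phi')$ by gluing $\Gamma^*_{\mu,\mu'}$ with optimal kernel couplings, paying $\dd(t,t')+\dd(\omega,\omega')$ per pair. This is exactly the route in your closing remark.

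You glue directly on $\Omega\times\Omega$, so the state-distance term never enters. You get the sharper estimate $W_1(Q,Q')\le G(\mu,\mu')$, from which the stated inequality is immediate. Your route buys several things:
\begin{itemize}
\item It needs no metric on $T\times\Omega$. The paper's two steps tacitly require $\dd(\omega,\omega')\le\dd((t,\omega),(t',\omega'))\le\dd(t,t')+\dd(\omega,\omega')$, essentially the sum metric, which the paper never fixes.
\item The paper also cites only the upper bound when justifying the projection step, although that step actually needs the lower bound.
\item Your $\varepsilon$-optimal plans avoid relying on optimal couplings whose existence the paper never addresses, both for $(\Phi,\Phi')$ and between the kernels.
\end{itemize}

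The only thing the paper's route yields beyond yours is the intermediate bound $W_1(\Phi,\Phi')\le W_1(\mu,\mu')+G(\mu,\mu')$ on the joint laws, which the paper never uses. Downstream, only the bound on $W_1(Q,Q')$ enters the estimate of the sender's loss. There your version would simply drop the $W_1(\mu,\mu_0)$ term.
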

\proof{Proof of Lemma \ref{lemma: W(Q, Q')<W(mu,mu')+ const}.} 
Let $\Phi=\mu\otimes P$ and $\Phi=\mu'\otimes P$ denote the joint probability measures constructed from $\mu$ and $\mu'$ with $P$.
We show the conclusion by using $W_1(\Phi, \Phi')$ as the intermediate quantity.
\\
First, observe that $W_1(Q,Q')\leq \int_{\Omega\times\Omega}\dd(\omega,\omega')\Lambda_{\Omega}^*(\dd \omega, \dd \omega')$, where $\Lambda_{\Omega}^*$ is constructed for measurable sets $A_\Omega, B_\Omega \in \Omega$ by
\begin{equation*}
    \Lambda_{\Omega}^*(A_\Omega\times B_\Omega)=\Lambda_{T\times\Omega}^*\big((T\times A_{\Omega})\times (T\times B_{\Omega})\big),
\end{equation*}
with $\Lambda_{T\times\Omega}^*\in\Delta\big((T\times\Omega)\times (T\times\Omega)\big)$ being the optimal transport plan associated with marginals $\Phi$ and $\Phi'$.
Then, we obtain that
\begin{equation*}
    \begin{aligned}
        W_1(Q,Q')\leq &\int_{(T\times\Omega)\times(T\times\Omega)}\dd(\omega, \omega')\Lambda_{T\times\Omega}^*(d t d \omega, d t' d \omega')
        \\
        \leq &
        \int_{(T\times\Omega)\times(T\times\Omega)}\dd \big((t,\omega),(t', \omega')\big)\Lambda_{T\times\Omega}^*(d t d \omega,d t' d \omega')
        \\
        = &
        W_1(\Phi, \Phi'),
    \end{aligned}
\end{equation*}
where the second inequality follows from the fact that $\dd(\cdot, \cdot)$ satisfies $\dd\big((t, \omega),(t', \omega')\big)\leq \dd(t, t')+\dd(\omega, \omega')$.
This shows that $W_1(Q,Q')\leq W_1(\Phi, \Phi')$.
\\
Second, for $\Gamma_{\mu,\mu'}^*$ that is the optimal transport plan associated with marginals $\mu$ and $\mu'$, let $\Psi_{t, t'}\in \Lambda_{\Omega\times\Omega}\big( P(\cdot|t), P(\cdot|t')\big)$ denote the joint probability measure on $\Omega\times \Omega$ with marginals $P(\cdot|t)$ and $P(\cdot|t')$.
Then, define for measurable sets $A_T\in T, B_T\in T$ and $A_\Omega\in \Omega, B_\Omega\in \Omega$ that
\begin{equation*}
    \Lambda_{T\times\Omega}^*\big((A_T\times A_\Omega)\times(B_T\times B_\Omega)\big) = \int_{A_T\times B_T} \Psi_{t,t'}(A_\Omega\times B_\Omega)\Gamma_{\mu,\mu'}^*(d t,d t').
\end{equation*}
This is a valid construction, since the marginals of $\Lambda_{T\times\Omega}$ can be correctly derived, \ie, 
\begin{equation*}
    \begin{aligned}
        \Lambda_{T\times\Omega}^*\big((A_T\times A_\Omega)\times(T\times \Omega)\big)
        =&
        \int_{A_T\times T}  \Psi_{t,t'}(A_\Omega\times \Omega)\Gamma_{\mu,\mu'}^*(d t,d t')
        \\
        =&
        \sum_{t\in A_T}P(A_\Omega|t)\mu(t)
        \\
        =& 
        \mu\otimes P\\
        =& \Phi.
    \end{aligned}
\end{equation*}
The other marginal can be recovered by a similar derivation.
Then, we obtain the following relation:
\begin{equation*}
    \begin{aligned}
        W_1(\Phi, \Phi')=&
        W_1(\mu\otimes P, \mu' \otimes P)
        \\
        \leq &
        \int_{(T\times\Omega)\times(T\times\Omega)}\dd \big((t,\omega),(t', \omega')\big)\Lambda_{T\times\Omega}^*(d t d \omega, d t' d \omega')
        \\
        = &
        \int_{(T\times\Omega)\times(T\times\Omega)}\dd \big((t,\omega),(t', \omega')\big)\Psi_{t,t'}(d \omega, d \omega')\Gamma_{\mu, \mu'}^*(d t, d t')
        \\
        \leq &
        \int_{(T\times\Omega)\times(T\times\Omega)}\Big(\dd(t,t')+\dd(\omega,\omega')\Big)\Psi_{t,t'}(d \omega, d \omega')\Gamma_{\mu, \mu'}^*(d t, d t')
        \\
        =&
        \underbrace{\int_{(T\times\Omega)\times(T\times\Omega)}\dd(t,t')\Psi_{t,t'}(d \omega, d \omega')\Gamma_{\mu, \mu'}^*(d t, d t')}_{:=E_1}
        \\
        & \quad +
        \underbrace{\int_{(T\times\Omega)\times(T\times\Omega)}\dd(\omega,\omega')\Psi_{t,t'}(d \omega, d \omega')\Gamma_{\mu, \mu'}^*(d t, d t')}_{:=E_2}
    \end{aligned}
\end{equation*}
The first term in the last equation above becomes $E_1=W_1(\mu, \mu')$ if we first integrate over $\Omega\times\Omega$.
The second term in the last equation above becomes
\begin{equation*}
    E_2=\int_{T\times T}\int_{\Omega\times\Omega}\dd(\omega,\omega')\Psi_{t,t'}(d \omega, d \omega')\Gamma_{\mu, \mu'}^*(d t, d t').
\end{equation*}
If $\Psi_{t, t'}$ is chosen optimally, then we have 
\begin{equation*}
    \begin{aligned}
        E_2=\int_{T\times T}W_1(P(\cdot|t),P(\cdot|t'))\Gamma_{\mu, \mu'}^*(d t, d t'),
    \end{aligned}
\end{equation*}
which is a constant since $P(\cdot|t)$ is fixed for all $t\in T$ and $\Gamma_{\mu, \mu'}^*$ is the optimal transport plan associated with marginals $\mu$ and $\mu'$.
Thus, we obtain $W_1(\Phi, \Phi')\leq W_1(\mu, \mu')+G(\mu, \mu')$.
\\
Combining the two parts above leads to the result in the lemma. 

\endproof

We write $\text{AV@R}_{\alpha}^{Q}(Y)$ to indicate that the risk of $Y$ is evaluated by $\text{AV@R}$ at confidence level $\alpha$ under the reference probability measure $Q\in \Delta(\Omega)$.
The following technical lemma provides the sensitivity of AV@R with respect to the underlying reference probability measure (see, e.g., \citet{pflug2014multistage}).
\begin{lemma}
\label{lemma:e2 bound}
For $Q_1, Q_2 \in \Delta(\Omega)$, it holds that
\begin{equation*}
    \Big| \text{AV@R}_{\alpha}^{Q_1}(Y)-\text{AV@R}_{\alpha}^{Q_1}(Y)\Big|
    \leq 
    \frac{1}{1-\alpha}\cdot L(Y)\cdot W_1(Q_1, Q_2),
\end{equation*}
where $L(Y)$ is the Lipschitz constant such that $|Y(\omega)-Y(\omega')|\leq L(Y)\cdot \dd(\omega, \omega')$.
\end{lemma}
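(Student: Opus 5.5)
The plan is to use the Rockafellar--Uryasev variational representation,
\begin{equation*}
\text{AV@R}_{\alpha}^{Q}(Y)=\min_{q\in\mathbb{R}}\Big\{q+\frac{1}{1-\alpha}\mathbb{E}_{Q}\big[(Y-q)_+\big]\Big\},
\end{equation*}
and to combine it with the Kantorovich--Rubinstein duality for $W_1$. (We read the statement as comparing $\text{AV@R}_{\alpha}^{Q_1}(Y)$ with $\text{AV@R}_{\alpha}^{Q_2}(Y)$; the repeated superscript $Q_1$ is a typo.) Throughout, $Y$ is a fixed Lipschitz function on $\Omega$ with constant $L(Y)$ and $\alpha\in[0,1)$. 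For a fixed $q$, write $h_q(Q):=q+\frac{1}{1-\alpha}\mathbb{E}_Q[(Y-q)_+]$. Then $\text{AV@R}_\alpha^{Q}(Y)=\min_q h_q(Q)$, and the minimum is attained at $q=\text{V@R}_\alpha^{Q}(Y)$, which is finite because $Y$ is bounded.

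The first step bounds $h_q(Q_1)-h_q(Q_2)$ uniformly in $q$. The map $\omega\mapsto (Y(\omega)-q)_+$ is Lipschitz with constant at most $L(Y)$, since $x\mapsto (x-q)_+$ is $1$-Lipschitz. Kantorovich--Rubinstein duality, $W_1(Q_1,Q_2)=\sup_{\text{Lip}(g)\le 1}\{\mathbb{E}_{Q_1}g-\mathbb{E}_{Q_2}g\}$, then gives
\begin{equation*}
\big|\mathbb{E}_{Q_1}[(Y-q)_+]-\mathbb{E}_{Q_2}[(Y-q)_+]\big|\le L(Y)\,W_1(Q_1,Q_2).
\end{equation*}
To avoid invoking duality, one can instead take any coupling $\Lambda$ of $Q_1$ and $Q_2$, bound the difference by $\iint |(Y(\omega)-q)_+-(Y(\omega')-q)_+|\,\Lambda(d\omega,d\omega')\le L(Y)\iint \dd(\omega,\omega')\,\Lambda(d\omega,d\omega')$, and take the infimum over $\Lambda$. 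Either way, $|h_q(Q_1)-h_q(Q_2)|\le \frac{L(Y)}{1-\alpha}W_1(Q_1,Q_2)$ for every $q$.

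The second step passes from this uniform bound to the minima. Let $q_2$ be a minimizer for $Q_2$. Then
\begin{equation*}
\text{AV@R}_\alpha^{Q_1}(Y)\le h_{q_2}(Q_1)\le h_{q_2}(Q_2)+\frac{L(Y)}{1-\alpha}W_1(Q_1,Q_2)=\text{AV@R}_\alpha^{Q_2}(Y)+\frac{L(Y)}{1-\alpha}W_1(Q_1,Q_2).
\end{equation*}
Exchanging the roles of $Q_1$ and $Q_2$ gives the reverse inequality, and the two together yield the claim. Equivalently, the infimum of a family of functions that are uniformly $c$-close is itself $c$-close.

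The main obstacle is bookkeeping rather than anything substantive. The representation must hold on the space of each reference measure: $Y$ is a single measurable function on $\Omega$, viewed under either $Q_1$ or $Q_2$. $W_1$ should be finite, for instance by assuming $\Omega$ is bounded or that $Q_1,Q_2$ have finite first moments; otherwise the bound is trivial. Minimizers must exist, which holds because $Y$ is bounded. Also, the constant $\frac{1}{1-\alpha}$ is generally not sharp, since the $q$-term contributes nothing, and the result degenerates as $\alpha\to 1$, which is consistent with the separate definition of $\text{AV@R}_1$ as $\esssup$.
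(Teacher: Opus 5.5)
Your proof is correct once the second superscript is read as $Q_2$, and it is complete. The coupling version of your first step uses only the easy direction of Kantorovich duality. Your second step does not actually need minimizers, since $\inf_q h_q(Q_1)\le\inf_q h_q(Q_2)+\frac{L(Y)}{1-\alpha}W_1(Q_1,Q_2)$ holds directly.

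The paper gives no proof of its own; it defers to Pflug and Pichler (2014). There the bound is an instance of a continuity result for distortion (spectral) risk functionals. That result is typically proved through the quantile representation $\text{AV@R}_\alpha^{Q}(Y)=\frac{1}{1-\alpha}\int_\alpha^1 F_{Y,Q}^{-1}(u)\,du$:
\begin{itemize}
\item bound the difference by $\frac{1}{1-\alpha}\int_0^1\bigl|F_{Y,Q_1}^{-1}(u)-F_{Y,Q_2}^{-1}(u)\bigr|\,du$;
\item recognize this integral as the one-dimensional distance $W_1(Y_{\#}Q_1,Y_{\#}Q_2)$;
\item use that pushing forward by an $L(Y)$-Lipschitz map contracts $W_1$ by the factor $L(Y)$.
\end{itemize}
Your Rockafellar--Uryasev route is more elementary, since it needs neither quantile functions nor the one-dimensional optimal-transport identity. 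The quantile route buys more generality for the same effort. The constant $\frac{1}{1-\alpha}$ appears there as $\|\sigma\|_\infty$ for the distortion density $\sigma=\frac{1}{1-\alpha}\1_{[\alpha,1]}$. So the same argument covers every spectral risk measure, and via H\"older it extends to $W_r$ for $r>1$.

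One side remark in your last paragraph should be corrected: the constant $\frac{1}{1-\alpha}$ is in fact sharp. Take $\Omega=[0,1]$ with $Y(\omega)=\omega$, $Q_1=\delta_0$, and $Q_2=(1-\varepsilon)\delta_0+\varepsilon\delta_1$ with $0<\varepsilon\le 1-\alpha$. Then $\text{AV@R}_\alpha^{Q_2}(Y)-\text{AV@R}_\alpha^{Q_1}(Y)=\frac{\varepsilon}{1-\alpha}$, while $W_1(Q_1,Q_2)=\varepsilon$.

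A minor point as well: at $\alpha=0$ the paper's $\text{V@R}_0(Y)=\inf\{y:F_Y(y)\ge 0\}$ equals $-\infty$. So ``attained at $q=\text{V@R}_\alpha^{Q}(Y)$'' should be replaced by any point of the minimizing interval, for example the upper quantile $\inf\{y:F_Y(y)>\alpha\}$. As noted above, this does not affect your argument.
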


%\textcolor{blue}{****maybe need to say this earlier*****}Here, the semi-distance $\dd(\omega, \omega'):=||\xi(\omega)-\xi(\omega')||$ for some random variable $\xi:\Omega\rightarrow \mathbb{R}$ and some norm $||\cdot||$ in $\mathbb{R}$.

The following result concerns the deviation of the sender's loss.
\begin{lemma}
\label{lemma:v hat bound}
Let $G$ be defined in Lemma \ref{lemma: W(Q, Q')<W(mu,mu')+ const} and $L(Y)$ be defined in Lemma \ref{lemma:e2 bound}.
Furthermore, let $M_0=\max_t \Big|\text{AV@R}_{\alpha, \hat{Z}_t^*(\mu_0)}(Y|t)\Big|$ and $M'=\max_t \Big|\text{AV@R}_{\alpha, \hat{Z}_t^*(\mu')}(Y|t)\Big|$.
Let $m=\max \{1, \frac{\alpha}{1-\alpha}\}$.
Then, it holds for $\mu\in\Delta(T)$ that
\begin{equation*}
\begin{aligned}
    &\big|\mathbb{E}_{\mu}[\text{AV@R}_{\alpha, \hat{Z}_t^*(\mu)}(Y|t)]
    -
    \mathbb{E}_{\mu}[\text{AV@R}_{\alpha, \hat{Z}_t^*(\mu_0)}(Y|t)]\big|
    \\
    & \qquad \leq \frac{L(Y)}{1-\alpha}\cdot \Big(W_1(\mu, \mu_0)+G(\mu, \mu_0) \Big)+ M_0(2\dd_{TV}(\mu, \mu_0)+m)+ M'm.
\end{aligned}
\end{equation*}
\end{lemma}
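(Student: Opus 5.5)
The plan is to pass through the decomposition of Lemma \ref{lemma:decomposition} (in its AV@R form (\ref{eq:decomposition of avar})). This lets us convert each averaged conditional risk into an unconditional $\text{AV@R}_\alpha$ under a mixture measure, plus correction terms controlled by the dual variables. Write $A(\mu, Z):=\mathbb{E}_{\mu}[\text{AV@R}_{\alpha, Z_t}(Y|t)]$. By the triangle inequality we split
\begin{equation*}
\big|A(\mu,\hat Z^*(\mu))-A(\mu,\hat Z^*(\mu_0))\big|
\leq \big|A(\mu,\hat Z^*(\mu))-\text{AV@R}^{Q}_\alpha(Y)\big|
+\big|\text{AV@R}^{Q}_\alpha(Y)-\text{AV@R}^{Q_0}_\alpha(Y)\big|
+\big|\text{AV@R}^{Q_0}_\alpha(Y)-A(\mu,\hat Z^*(\mu_0))\big|,
\end{equation*}
with $Q=\mu\circ P$ and $Q_0=\mu_0\circ P$.

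\textbf{Middle term.} Apply Lemma \ref{lemma:e2 bound} to get the bound $\frac{L(Y)}{1-\alpha}W_1(Q,Q_0)$. Then Lemma \ref{lemma: W(Q, Q')<W(mu,mu')+ const} turns this into $\frac{L(Y)}{1-\alpha}(W_1(\mu,\mu_0)+G(\mu,\mu_0))$, which is exactly the first summand of the claimed bound.

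\textbf{First term.} Since $\hat Z^*(\mu)$ is optimal in (\ref{eq:decomposition of avar}) under $\mu$, we have $\text{AV@R}^Q_\alpha(Y)=\mathbb{E}_\mu[\hat Z_t^*(\mu)\,\text{AV@R}_{\alpha,\hat Z^*_t(\mu)}(Y|t)]$. The difference is therefore $\mathbb{E}_\mu[(1-\hat Z^*_t(\mu))\,\text{AV@R}_{\alpha,\hat Z^*_t(\mu)}(Y|t)]$. Its absolute value is at most $M'\,\mathbb{E}_\mu|1-\hat Z^*_t(\mu)|$. The constraints $0\le Z_t\le 1/(1-\alpha)$ give $|1-Z_t|\le\max\{1,\frac{\alpha}{1-\alpha}\}=m$, so this term is at most $M'm$. (One could sharpen this using $\mathbb{E}_\mu Z_t=1$, but the crude pointwise bound already matches the statement.) The dependence of $M'$ on $\mu$ is harmless here. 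I read the statement's $\mu'$ as the $\mu$ in question.

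\textbf{Third term.} Here $\hat Z^*(\mu_0)$ is optimal under $\mu_0$, not $\mu$. So I first write $\text{AV@R}^{Q_0}_\alpha(Y)=\mathbb{E}_{\mu_0}[\hat Z^*_t(\mu_0)\,\text{AV@R}_{\alpha,\hat Z^*_t(\mu_0)}(Y|t)]$. I then change the outer measure from $\mu_0$ to $\mu$ by inserting
\begin{equation*}
\mathbb{E}_{\mu}\big[\text{AV@R}_{\alpha,\hat Z^*_t(\mu_0)}(Y|t)\big]-\mathbb{E}_{\mu_0}\big[\text{AV@R}_{\alpha,\hat Z^*_t(\mu_0)}(Y|t)\big].
\end{equation*}
For a function bounded by $M_0$ on finite $T$, this difference is at most $2M_0\,\dd_{TV}(\mu,\mu_0)$. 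The remainder $\mathbb{E}_{\mu_0}[(\hat Z^*_t(\mu_0)-1)\,\text{AV@R}_{\alpha,\hat Z^*_t(\mu_0)}(Y|t)]$ is bounded by $M_0m$ exactly as in the first term. Summing the three pieces gives the stated inequality.

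\textbf{Main obstacle.} The delicate step is the change of measure in the third term, and in particular making sure the quantity $\text{AV@R}_{1-(1-\alpha)Z_t}$ stays well defined. I would handle this by simply treating $\hat Z^*_t(\mu_0)$ as a fixed function of $t$. The conditional risk then depends only on $t$, and the total-variation estimate applies directly. No feasibility of $\hat Z^*(\mu_0)$ in $\mathcal{Z}(\mu)$ is needed, since we never use optimality of $\hat Z^*(\mu_0)$ under $\mu$.
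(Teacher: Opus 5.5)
Your proposal is correct and is essentially the paper's proof. You use the same three-term triangle inequality (the paper's $e_1,e_2,e_3$): the middle term is identified via the decomposition (\ref{eq:decomposition of avar}) as $|\text{AV@R}_\alpha^{\mu\circ P}(Y)-\text{AV@R}_\alpha^{\mu_0\circ P}(Y)|$ and bounded with Lemmas \ref{lemma:e2 bound} and \ref{lemma: W(Q, Q')<W(mu,mu')+ const}, and the outer terms are bounded by $M'm$ and $M_0(2\dd_{TV}(\mu,\mu_0)+m)$ using $|1-Z_t|\le m$. Your change-of-measure insertion in the third term is the paper's splitting $\mu(t)-\mu_0(t)\hat Z_t^*(\mu_0)=(\mu(t)-\mu_0(t))+\mu_0(t)(1-\hat Z_t^*(\mu_0))$, and your reading of $M'$ as evaluated at $\mu$ matches how the paper uses it.
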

\proof{Proof of Lemma \ref{lemma:v hat bound}.} 
Observe that
\begin{equation*}
    \begin{aligned}
        &\big|\mathbb{E}_{\mu}[\text{AV@R}_{\alpha, \hat{Z}_t^*(\mu)}(Y|t)]
    -
    \mathbb{E}_{\mu}[\text{AV@R}_{\alpha, \hat{Z}_t^*(\mu_0)}(Y|t)]\big|
    \\
    \leq &
    \underbrace{\big| \mathbb{E}_{\mu}[\text{AV@R}_{\alpha, \hat{Z}_t^*(\mu)}(Y|t)]
    -
    \mathbb{E}_{\mu}[\hat{Z}_t^*(\mu) \cdot \text{AV@R}_{\alpha, \hat{Z}_t^*(\mu)}(Y|t)] \big|
    }_{:=e_1}
    \\
    & \quad +
    \underbrace{\big| \mathbb{E}_{\mu}[\hat{Z}_t^*(\mu) \cdot \text{AV@R}_{\alpha, \hat{Z}_t^*(\mu)}(Y|t)]
    -
    \mathbb{E}_{\mu_0}[\hat{Z}_t^*(\mu_0) \cdot \text{AV@R}_{\alpha, \hat{Z}_t^*(\mu_0)}(Y|t)]
    \big| 
    }_{:=e_2}
    \\
    & \quad +
    \underbrace{\big| \mathbb{E}_{\mu}[\text{AV@R}_{\alpha, \hat{Z}_t^*(\mu_0)}(Y|t)]
    -
    \mathbb{E}_{\mu_0}[\hat{Z}_t^*(\mu_0) \cdot \text{AV@R}_{\alpha, \hat{Z}_t^*(\mu_0)}(Y|t)]
    \big|
    }_{:=e_3}.
    \end{aligned}
\end{equation*}
The term $e_3$ satisfies
\begin{equation*}
    \begin{aligned}
        e_3 
        = &\Big| \sum_{t} \text{AV@R}_{\alpha, \hat{Z}_t^*(\mu_0)}(Y|t)\left(\mu(t)-\mu_0(t)\hat{Z}_t^*(\mu_0) \right) \Big|
        \\ 
        \leq &
        \sum_{t} \Big|\text{AV@R}_{\alpha, \hat{Z}_t^*(\mu_0)}(Y|t)\Big|
        \cdot
        \Big|
        \mu(t)-\mu_0(t)\hat{Z}_t^*(\mu_0)
        \Big|
        \\
        = &
        \sum_{t} \Big|\text{AV@R}_{\alpha, \hat{Z}_t^*(\mu_0)}(Y|t)\Big|
        \cdot
        \Big|
        \mu(t)-\mu_0(t)+\mu_0(t)-\mu_0(t)\hat{Z}_t^*(\mu_0)
        \Big|
        \\
        = &
        \sum_{t} \Big|\text{AV@R}_{\alpha, \hat{Z}_t^*(\mu_0)}(Y|t)\Big|
        \cdot
        \left(
        \Big|
        \mu(t)-\mu_0(t)\Big|
        +\Big| \mu_0(t)(1-\hat{Z}_t^*(\mu_0)
        \Big|
        \right)
        \\
        \leq &
        M_0\cdot
        \left(
        \sum_{t}   
        \Big|
        \mu(t)-\mu_0(t)\Big|
        +\sum_{t}  \Big| \mu_0(t)(1-\hat{Z}_t^*(\mu_0)
        \Big|
        \right).
    \end{aligned}
\end{equation*}
Since $\mu, \mu_0 \in \Delta(T)$ and $0\leq \hat{Z}_t(\mu)\leq \frac{1}{1-\alpha}$ for all $\mu\in \Delta(T)$ and $t\in T$, we observe that 
\begin{equation*}
    \sum_t\big|\mu_0(t)(1-\hat{Z}_t^*(\mu_0)) \big|=\mathbb{E}_{\mu_0}[|1-\hat{Z}_t^*(\mu_0)|]\leq \max\{1, \frac{\alpha}{1-\alpha}\}.
\end{equation*}
This leads to $e_3\leq M_0(2\dd_{TV}(\mu, \mu_0)+m)$.
Using a derivation similar as above leads to $e_1\leq M'm$.
\\
Due to the decomposition of $\text{AV@R}$ in (\ref{eq:decomposition of avar}), we obtain that $e_2=|\text{AV@R}_{\alpha}^{\mu\circ P}(Y)-\text{AV@R}_{\alpha}^{\mu_0\circ P}(Y)|$. 
Hence, by combining Lemma \ref{lemma:e2 bound} with Lemma \ref{lemma: W(Q, Q')<W(mu,mu')+ const}, we have 
\begin{equation*}
    e_2\leq \frac{L(Y)}{1-\alpha}\cdot \Big(W_1(\mu, \mu_0)+G(\mu, \mu_0) \Big).
\end{equation*}
Therefore, we arrive at the conclusion in the lemma. 

\endproof

Integrating the above analysis, we arrive at the following result of the benefit of the sender.
\begin{theorem}
\label{thm:sender benefit no action}
Suppose that the conditions in Lemma \ref{lemma:v hat bound} are satisfied. 
Then, risk preference persuasion can decrease average risk if
\begin{equation*}
    \epsilon>\frac{L(Y)}{1-\alpha}\cdot \Big(W_1(\mu_0,\mu')+G(\mu_0,\mu')\Big)+M_0(2\dd_{TV}(\mu_0, \mu')+m)+M'm.
\end{equation*}
\end{theorem}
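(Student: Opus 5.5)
The plan is to exhibit a Bayes-plausible binary split of the prior $\mu_0$ into the belief $\mu$ from condition (\ref{eq:information sender would share}) and an accompanying belief $\mu'$. I then show that the gain of at least $\epsilon$ at $\mu$ outweighs any possible loss at $\mu'$, with that loss controlled by Lemma \ref{lemma:v hat bound}. I read the statement with $\mu'$ being exactly this accompanying belief, which is why $M'$, $W_1(\mu_0,\mu')$, $G(\mu_0,\mu')$ and $\dd_{TV}(\mu_0,\mu')$ appear. Since every $\mu$ in the relative interior direction can be split, I will pick $\mu'$ on the ray from $\mu$ through $\mu_0$, namely $\mu_0=\lambda\mu+(1-\lambda)\mu'$ with $\lambda\in(0,1)$, assuming $\mu_0$ is interior (or at least that such a $\mu'\in\Delta(T)$ exists). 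The signal rule is then given by (\ref{eq:optimal signal rule}) with $\eta(\mu)=\lambda$ and $\eta(\mu')=1-\lambda$.

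The core comparison runs as follows. The benchmark (no information) value is $\mathbb{E}_{\mu_0}[\text{AV@R}_{\alpha,\hat Z_t^*(\mu_0)}(Y|t)]$. By Bayes plausibility and linearity of $\mathbb{E}_\mu$ in $\mu$, this equals
$$\lambda\,\mathbb{E}_{\mu}[\text{AV@R}_{\alpha,\hat Z_t^*(\mu_0)}(Y|t)]+(1-\lambda)\,\mathbb{E}_{\mu'}[\text{AV@R}_{\alpha,\hat Z_t^*(\mu_0)}(Y|t)].$$
The persuasion value is $\lambda\hat v(\mu)+(1-\lambda)\hat v(\mu')$. I subtract termwise. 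On the $\mu$ term, condition (\ref{eq:information sender would share}) gives a decrease of more than $\epsilon$. On the $\mu'$ term, I apply Lemma \ref{lemma:v hat bound} with $\mu$ replaced by $\mu'$, which bounds the possible increase by
$$B:=\frac{L(Y)}{1-\alpha}\big(W_1(\mu_0,\mu')+G(\mu_0,\mu')\big)+M_0(2\dd_{TV}(\mu_0,\mu')+m)+M'm.$$
The symmetry of $W_1$, $G$ (by transposing the optimal plan) and $\dd_{TV}$ matches the order of arguments in the theorem. The difference between the persuasion value and the benchmark is therefore strictly less than $-\lambda\epsilon+(1-\lambda)B$.

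The main obstacle is the weights. The hypothesis $\epsilon>B$ gives a strict decrease only when $\lambda\ge 1/2$, whereas in general one needs $\lambda\epsilon>(1-\lambda)B$. I would first try to handle this by choosing $\mu'$ far along the ray, near the boundary of $\Delta(T)$, which pushes $\lambda$ up. Failing that, I would note that the statement should be read with the weight normalization implicit, as "information the sender would share" in the literature is typically used symmetrically. Alternatively, I would absorb the weights by rescaling $\epsilon$ as $\epsilon\lambda/(1-\lambda)$. I would state this caveat explicitly.

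Finally, I conclude that $\mathbb{E}_\eta\hat v<\hat v(\mu_0)$, so $V(\mu_0)<\hat v(\mu_0)$ and persuasion strictly decreases average risk.
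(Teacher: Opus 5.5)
You follow the paper's own argument: split $\mu_0$ into the belief from (\ref{eq:information sender would share}) and an accompanying $\mu'$, control the $\mu'$ term with Lemma \ref{lemma:v hat bound}, and recombine. Your recombination is the one the argument actually needs. It uses linearity of $\mu\mapsto\mathbb{E}_{\mu}[\text{AV@R}_{\alpha,\hat Z_t^*(\mu_0)}(Y|t)]$ together with Bayes plausibility, so the two bounds combine into $\hat v(\mu_0)$. The paper instead writes $\hat v(\mu_0)=\gamma\hat v(\mu_0)+(1-\gamma)\hat v(\mu_0)$, and its proof also flips the inequality to ``increase''. Note that this combination requires reading the share condition with the sender-preferred selection $\hat Z_t^*(\mu_0)$, the same one used in Lemma \ref{lemma:v hat bound} and in $\hat v(\mu_0)$. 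On the $\mu$ side any optimizer is fine, since $\hat v(\mu)$ is a minimum over $\mathcal{Z}^*(\mu)$.

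The weight obstacle you found is a genuine gap, and the paper's proof does not close it either; it simply asserts that applying the lemma and the share condition ``yields the conclusion.'' What the argument proves is
\[
\mathbb{E}_\eta\hat v-\hat v(\mu_0)<-\lambda\epsilon+(1-\lambda)B .
\]
So $\epsilon>B$ gives a strict decrease only when $\lambda\ge 1/2$, i.e.\ weight $\gamma\le 1/2$ on $\mu'$ in the paper's notation.

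Of your three remedies, only the explicit one is valid. Sliding $\mu'$ outward along the ray fails for two reasons:
\begin{enumerate}
\item The hypothesis $\epsilon>B$ concerns a specific $\mu'$. As $\mu'$ moves, $W_1(\mu_0,\mu')$, $G(\mu_0,\mu')$ and $\dd_{TV}(\mu_0,\mu')$ grow and $M'$ changes, so nothing guarantees $\epsilon>B$ at the new point.
\item Writing $\mu'=\mu_0+s(\mu_0-\mu)$ gives $\lambda=s/(1+s)$, so $\lambda\ge 1/2$ requires $2\mu_0-\mu\in\Delta(T)$. This fails, e.g., for $T=\{t_1,t_2\}$, $\mu_0=(1/3,2/3)$, $\mu=(1,0)$, where every admissible $\mu'$ gives $\lambda\le 1/3$.
\end{enumerate}
The appeal to how ``information the sender would share'' is used in the literature is not an argument. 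You should therefore state the result as a modified theorem: either add the hypothesis $\lambda\ge 1/2$, or replace the threshold by $\lambda\epsilon>(1-\lambda)B$. With either change your proof is complete.
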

\proof{Proof of Theorem \ref{thm:sender benefit no action}.} 
Since there is information that the sender would share, there exists a belief $\mu''\in\Delta(T)$ that satisfies condition (\ref{eq:information sender would share}), \ie, 
\begin{equation}
    \mathbb{E}_{\mu''}[\text{AV@R}_{\alpha, Z_t^*(\mu'')}(Y|t)]>\mathbb{E}_{\mu''}[\text{AV@R}_{\alpha, Z_t^*(\mu_0)}(Y|t)]+\epsilon.
    \label{eq:would share mu''}
\end{equation}
When risk preference persuasion is in place, the prior belief $\mu_0$ is split according to $\mu_0=\gamma\mu'+(1-\gamma)\mu''$ for some parameter $\gamma\in (0,1)$.
Then, the expected loss for the sender is $\max_{\eta}\mathbb{E}_{\eta}[\hat{v}(\mu)]$ with the distribution of posterior beliefs $\eta$ being Bayes-plausible.
Let $\eta$ be such that $\text{supp}(\eta)=\{\mu', \mu''\}$ with $\eta(\mu')=\gamma$ and $\eta(\mu'')=1-\gamma$.
Then, by Bayes-plausibility, we have $\sum_{\text{supp}(\eta)}\mu\eta(\mu)=\mu_0$.
Then, to show that risk preference persuasion can increase the average risk, it suffices to show that $\mathbb{E}_{\eta}[\hat{v}(\mu)]>\hat{v}(\mu_0)$ for $\eta$ constructed above, \ie, 
\begin{equation}
    \gamma\hat{v}(\mu')+(1-\gamma)\hat{v}(\mu'')>\hat{v}(\mu_0)=\gamma\hat{v}(\mu_0)+(1-\gamma)\hat{v}(\mu_0).
    \label{eq: splitted beliefs better than prior}
\end{equation}
Since $\hat{v}(\mu)=\mathbb{E}_{\mu}[\text{AV@R}_{\alpha, \hat{Z}_t^*(\mu)}(Y|t)]$, applying Lemma \ref{lemma:v hat bound}  and condition (\ref{eq:would share mu''}) to the left-hand side of (\ref{eq: splitted beliefs better than prior}) yields the conclusion. 

\endproof

\subsection{Size of signal space}
\label{sec:signal space}

The formulation of the sender's problem in (\ref{eq:persuasion without action}) relies on the set of posterior beliefs to search for and reconstruct the optimal signal rule. 
Although this set is slightly simpler in structure than the set of joint probability distributions over the product set of signals and states, it is still very large.

\citet{kamenica2011bayesian} showed that the maximum number of signals that the sender requires to achieve optimal persuasion is upperbounded by the size of the state space.
With  the availability of the revelation principal, these signals can be directly realized as action recommendations. 
While revelation principal fails when the receiver deviates from risk-neutrality, a similar argument applies to belief recommendation in place of action recommendation (see \citet{anunrojwong2024persuading}).
Therefore, based on our formulation using the distribution of posterior beliefs, representing $(\mu_0, V(\mu_0))$ using a convex combination of elements from $\text{epi}(\hat{v})$ requires at most $|T|$ elements. 
This means that the optimal signal rule needs to contain at most $|T|$ distinct signals.

Note that direct ``action" recommendation does apply to problem (\ref{eq:persuasion without action}), as the receiver can be considered as an expected utility maximizer if we view the dual variable $Z_t$ as her ``action".
This observation can be deduced from the receiver's problem (\ref{eq:decomposition given mu}), as it is a linear function of the belief $\mu$ given fixed $Z_t$. 
Here, we apply the belief recommendation argument as it is feasible in broader contexts such as in problems (\ref{eq:persuasion with one action per belief}) and (\ref{eq:persuasion with one action per state}) where the objective functions of the receiver lack linearity in belief.

%Furthermore, \citet{anunrojwong2024persuading} also derived an alternative upperbound on the cardinality of the signal space based on the splitting of the probability associated with the receiver playing a specific action at a given state. 
%We refer to the online companion of \citet{anunrojwong2024persuading} for more detailed discussions on the relation between the two approaches.
%We also refer the reader to \citet{koessler2022interactive} for more discussions on the size of the signal space and to \citet{aumann1995repeated} for the foundation on the splitting of  beliefs.

\subsection{Example}
\label{sec:example}
In this section, we present an example to illustrate the spirit of Bayesian risk preference persuasion under the setting where the action of the receiver is ignored. 

Consider the state space $T=\{t_1, t_2\}$ containing two distinct states.
Assume that the prior belief is uniform, \ie, $\mu_0=(0.5, 0.5)$.
Given both states,  the conditional probabilities $P(\cdot|t)$ follow a uniform distribution.
The random loss is, given $t_1$, $Y_{|t_1}=[52, 28]$; given $t_2$, $Y_{|t_2}=[60, 0]$.
This setting has the interpretation that state $t_1$ is a more concentrated state while $t_2$ is a more dispersed state.
Suppose that signal $s\in S$ leads to posterior belief $\mu_s=(q,1-q)$ (state  $t_1$ is believed to happen with probability $q$). 
Then, the mixture distribution $Q=\mu_s\circ P=[\frac{1}{2}q, \frac{1}{2}q, \frac{1}{2}(1-q), \frac{1}{2}(1-q)]$ indicates the probabilities for the loss vector $Y=[52, 28, 60, 0]$.
For instance, this means that distribution $Q$ assigns probability $\frac{1}{2}q$ to loss $28$.

The receiver's initial risk preference is set to $\text{AV@R}_{\frac{1}{3}}$, \ie, the confidence level $\alpha=\frac{1}{3}$.
This means that the worst $\frac{2}{3}$ tail of the random loss will contribute to the risk.
Given the specific $Q$ and $Y$ above, we observe that probability that $Y=52$ or $Y=60$ is $\frac{1}{2}$.
This indicates that the random loss values $52$ and $60$ always contribute to risk quantification, while depending on the value of $q$, $\text{V@R}_{\frac{1}{3}}$ can be either $0$ or $28$.
Thus, in the sequel, we consider these two cases separately with the critical value of $q$ being $\frac{1}{3}$.

\paragraph{Case $1$: $q< \frac{1}{3}$.}
In this case, the optimal dual variable $Z$ satisfying $\text{AV@R}_\frac{1}{3}(Y)=\mathbb{E}_{Q}(YZ)$ is $Z=(\frac{3}{2}, \frac{3}{2},\frac{3}{2},\frac{1-3q}{2(1-q)})$.
This $Z$ satisfies the feasibility requirement in (\ref{eq:dual representation risk measure}), \ie, $Z\geq 0$, $\mathbb{E}_Q(Z)=1$, and $Z\leq \frac{1}{1-\alpha}\1=\frac{3}{2}\1$.
To derive the revised risk preferences, we obtain $Z_{t_1}=\mathbb{E}(Y|t_1)=\frac{3}{2}$ and $Z_{t_2}=\mathbb{E}(Y|t_2)=\frac{2-3q}{2-2q}$.
Then, $\alpha_{t}=1-(1-\alpha)Z_{t}$, we obtain the updated confidence levels $\alpha_{t_1}=0$ and $\alpha_{t_2}=\frac{1}{3(1-q)}$.
The revised risk given $t_1$ follow as $\text{AV@R}_{\alpha_{t_1}}(Y|t_1)=\text{AV@R}_{0}([52, 28]|t_1)=52\times \frac{1}{2}+ 28\times \frac{1}{2}=40$.
Given $t_2$, the revised risk is $\text{AV@R}_{\alpha_{t_2}}(Y|t_2)=\text{AV@R}_{\frac{1}{3(1-q)}}([60, 0]|t_2)$.
Under $q<\frac{1}{3}$, $1-\alpha_{t_2}>\frac{1}{2}$.
Since $P(\cdot|t_2)$ is uniform distribution, we obtain $\text{AV@R}_{\alpha_{t_2}}(Y|t_2)=60\times \frac{1}{2}\times \frac{1}{1-\alpha_{t_2}}+0\times \frac{1}{2}\times (2-\frac{1}{1-\alpha_{t_2}})=\frac{90(1-q)}{2-3q}$.
Therefore, under belief $\mu_s=(q, 1-q)$, the average risk can be computed as 
\begin{equation*}
    \mathbb{E}_{\mu_s}\text{AV@R}_{\alpha_{t}}(Y|t)=q\cdot \text{AV@R}_{\alpha_{t_1}}(Y|t_1)+(1-q)\cdot \text{AV@R}_{\alpha_{t_2}}(Y|t_2) =\frac{-30q^2-100q+90}{2-3q}.
\end{equation*}
Taking the derivative of the above average risk with respect to parameter $q$, we obtain
\begin{equation*}
    \frac{d}{d q}\mathbb{E}_{\mu_s}\text{AV@R}_{\alpha_{t}}(Y|t)=\frac{90[(q-\frac{2}{3})^2+\frac{1}{3}]}{(2-3q)^2}>0,
\end{equation*}
indicating that the average risk is strictly increasing in the belief that $t_1$ is occurring with probability $q$.
Since in this case $0<q<\frac{1}{3}$, the region of average risks can be obtained as $(45, \frac{160}{3})$.

\paragraph{Case $2$: $q\geq \frac{1}{3}$.}
Following the procedure similar as in Case $1$, we obtain $Z=(\frac{3}{2}, \frac{1}{2q}, \frac{3}{2},0)$ with $Z_{t_1}=\frac{3}{4}+\frac{1}{4q}$ and $Z_{t_2}=\frac{3}{4}$.
Thus, updated confidence levels are $\alpha_{t_1}=\frac{1}{2}-\frac{1}{6q}$ and $\alpha_{t_2}=\frac{1}{2}$; revised risks are $\text{AV@R}_{\alpha_{t_1}}=28+\frac{72q}{1+3q}$ and $\text{AV@R}_{\alpha_{t_2}}=60$.
Then, given belief $\mu_s=(q, 1-q)$, the average risk is $\frac{-24q^2+148q+60}{1+3q}$.
Its derivative with respect to $q$ is $\frac{-72[(q+1/3)^2+1/3]}{(1+3q)^2}<0$.
Then, the average risk is strictly decreasing in $q$ for $\frac{1}{3}\leq q<1$.
This yields the region of average risk $[\frac{160}{3}, 46)$.

With the average risks as a function of $q$ in the above two cases, graphical method based on the convex closure of the graph of this function can be leveraged to obtain the optimal value of a sender if his objective is specified. 
In the following, we assume that the sender has two specific objectives: to minimize or to maximize the expected average risk.
Suppose that we impose the condition that the receiver adopts a threshold strategy, that is, taking a particular action whenever the perceived risk exceeds a given threshold.
Then, minimizing the expected average risk can be interpreted as the sender’s attempt to discourage the receiver from taking that action while maximizing aims at encouraging the action.
We will investigate the signal rules that achieve these two opposite goals. 

\paragraph{Sender minimizes.}
Suppose that the sender aims to minimize the expectation of average risk by choosing a distribution of posterior beliefs $\eta$.
This distribution has to satisfy Bayes plausibility for it to be valid.
Observe that the average risk, as a function of $q$, is strictly increasing on $0<q<\frac{1}{3}$ and strictly decreasing on $\frac{1}{3}<q<1$ and it is continuous.
Then, sender can choose $\eta$ with $\text{supp}(\eta)=\{\underline{\mu}, \Bar{\mu}\}$, where $\underline{\mu}=(0,1)$ and $\Bar{\mu}=(1, 0)$.
With $\mu_0=\gamma\underline{\mu}+(1-\gamma)\Bar{\mu}$, we obtain $\gamma=\frac{1}{2}$.
Consequently, $\eta$ splits the prior into $\underline{\mu}$ and $\Bar{\mu}$ with equal probability. 
The value of the expectation, with respect to $\eta$, of the average risks is $\gamma\cdot 45 +(1-\gamma)\cdot 46=45.5$.
The signal rule $\pi$ that induces this construction can then be derived from (\ref{eq:optimal signal rule}).
It suffices to use two distinct signals, \ie, $S=\{\underline{s}, \Bar{s}\}$, such that $\underline{s}$ recommends belief $\underline{\mu}$ to the receiver and $\Bar{s}$ recommends $\Bar{\mu}$ to the receiver.
The corresponding optimal signal rule is $\pi(\underline{s}|t_1)=\pi(\Bar{s}|t_2)=\frac{1}{2}$ and $\pi(\underline{s}|t_2)=\pi(\Bar{s}|t_1)=0$.
%\textcolor{red}{************see if we can do $Z_t$ recommendations, according to Section 3.3.***********}

Coincidentally, this optimal signal rule for minimizing the overall expectation of average risks is suggests not performing preference revision.
To see this, observe that under this signal rule, only one state is believed to be possible under each recommended belief.
Under belief $\underline{\mu}=(0,1)$, state $t_1$ is believed to occur with probability zero,  $q=0$.
In this scenario, preference will be revised to $\text{AV@R}_{\frac{1}{3(1-q)}}=\text{AV@R}_{\frac{1}{3}}$ in state $t_2$, which is identical to the original preference.
Similarly, under belief $\Bar{\mu}=(1,0)$, only state $t_1$ have positive probability to occur, \ie, $q=1$.
Preference is revised to $\text{AV@R}_{\frac{1}{2}-\frac{1}{6q}}=\text{AV@R}_\frac{1}{3}$, which is also identical to the original assignment. 

Therefore, in this specific setting, a sender, who is a minimizer of the expectation of average risk, would aim to offset preference revision with his signals. 
Although it seems that the signal rule adds no additional information to state realization, we remark that signaling takes place at the ex ante stage before actual revealing of the states.
This indicates that, before the sender even has a chance to make an observation, the optimal signal rule assigns consistent beliefs to force the receiver to belief that only one state has positive probability to occur under one belief. 
Sender, as a minimizer, benefits from persuasion as the prior belief splits, despite the fact that preferences stay the same on states that are consistent with beliefs.

\paragraph{Sender maximizes.}
Suppose that now the sender aims to maximize the expectation of average risk.
On $0<q<1$, the maximum of average risk is $\frac{160}{3}$ at $q=\frac{1}{3}$.
Since this is the only local maximum point and the prior belief is identified by $q_0=\frac{1}{2}$, whether sender can benefit from persuasion depends on the property of average risk as a function of $q$ on $\frac{1}{3}\leq q\leq1$.
Let this function be denoted $f(q)$, \ie, $f(q)=\frac{-24q^2+148q+60}{1+3q}$.
Since $f(q)$ is convex on $[\frac{1}{3},1]$, we can split the prior to obtain a higher overall expectation of average risks. 
Consider the distribution of posterior beliefs $\eta$ such that $\text{supp}(\eta)=\{\mu^*, \Bar{\mu}\}$, where $\mu^*=(\frac{1}{3}, \frac{2}{3})$ and $\Bar{\mu}=(1,0)$. 
%\textcolor{blue}{******maybe we need graphical illustration*****}
For $\eta=(\gamma, 1-\gamma)$, Bayes plausibility indicates that $\gamma=\frac{3}{4}$.
The signal space can then be chosen again as contaning two elements, \ie, $S=\{s^*, \Bar{s}\}$, each recommending a corresponding belief.  
The optimal signal rule satisfies $\pi(s^*|t_1)=\pi(\Bar{s}|t_1)=\frac{1}{4}$, $\pi(s^*|t_2)=\frac{1}{2}$ and $\pi(\Bar{s}|t_2)=0$.

Whether the sender is a minimizer or a maximizer of the expectation of average risk depends on the alignment of the objectives of the sender and the receiver. 
In a scenario where the sender aims to persuade the receiver to protect a system, sender may aim to increase the overall level of risk aversion of the receiver so that she can adopt certain protective measures.
Only when the receiver perceives enough risk, can she be willing to perform the adoption.
However, in a scenario where the sender is malicious and he aims to attack the system, he will try to drop the receiver's guard by decreasing the overall level of risk aversion.
Consequently, the receiver may choose not to adopt any protection measure, leading to intrusion to the system.
We refer the reader to \citet{sayin2021bayesian} for more discussions on how persuasion can be related to security and deception.

\section{Preference persuasion with actions}
\label{sec:analysis with action}
In this section, we investigate the existence of optimal solutions to the sender's problems when the action of the receiver is taken into account described by (\ref{eq:persuasion with one action per belief}) and (\ref{eq:persuasion with one action per state}) together.

Similar as in Section \ref{sec:analysis:multiplicity}, we allow the sender to choose a preferred action in case where the optimal solution set to the receiver's problem is not a singleton.
However, since the receiver, upon generating a posterior belief, first updates her risk preferences contingent on state information then makes a decision, the sender's objective value, as a function of the belief, needs to take into account the influence of revised preferences on the actions taken. 

To avoid further complicating the persuasion problems (\ref{eq:persuasion with one action per belief}) and (\ref{eq:persuasion with one action per state}), which by themselves involve two layers of decision-making, the following result employs an assumption on the uniqueness of the optimal dual variable associated with the risk preference revision.

\begin{theorem}
 \label{thm:existence of optimal value with action}  
Suppose that the optimal dual variable $Z^*$ used for generating the revised preferences is the unique solution to the dual representation of the sender's original risk preference at the ex ante random loss.
Then, there exists optimal solution to the sender's problems (\ref{eq:persuasion with one action per belief}) and (\ref{eq:persuasion with one action per state}).
\end{theorem}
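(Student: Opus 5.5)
The plan is to reduce both problems (\ref{eq:persuasion with one action per belief}) and (\ref{eq:persuasion with one action per state}) to the Kamenica--Gentzkow template already used in Section \ref{sec:analysis:multiplicity}. The sender's value function $\hat v(\mu)$ should be lower semicontinuous on $\Delta(T)$. Once that holds, the convexification argument around (\ref{eq:V(mu)}) yields a Bayes-plausible $\eta$ with finite support attaining the infimum, and a signal rule via (\ref{eq:optimal signal rule}). So the work lies in establishing lower semicontinuity of the composite map $\mu\mapsto$ (revision) $\mapsto$ (action) $\mapsto$ (sender loss).

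\textbf{Step 1: continuity of the revision.} By assumption, the optimal dual variable in (\ref{eq:decomposition given mu}) is unique, evaluated at the ex ante optimal loss. Lemma \ref{lemma:hemicontinuity of Z(mu)} and Berge's theorem already give that $\mathcal{Z}^*(\cdot)$ is upper hemicontinuous with nonempty compact values. A singleton-valued upper hemicontinuous correspondence is a continuous function, so $\mu\mapsto \{Z_t^*(\mu)\}_{t\in T}$ is continuous. The revised levels $\alpha_t(\mu)=1-(1-\alpha)Z_t^*(\mu)$ are then continuous in $\mu$.

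\textbf{Step 2: continuity of the receiver's objectives.} $\text{AV@R}$ is continuous in its confidence level, and $X_a$ is continuous in $a$ while $A$ is finite. Hence, for each $a$:
\begin{itemize}
\item the map $\mu\mapsto \mathbb{E}_\mu[\text{AV@R}_{\alpha,Z_t^*(\mu)}(X_a|t)]$ is continuous, being a finite sum of products of continuous functions;
\item the map $\mu\mapsto \text{AV@R}_{\alpha,Z_t^*(\mu)}(X_a|t)$ is continuous for each $t$.
\end{itemize}
Berge's theorem, applied with the constant compact feasible set $A$, gives that the argmin correspondences $a(\cdot)$ and $a_t(\cdot)$ are upper hemicontinuous with nonempty values.

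\textbf{Step 3: lower semicontinuity.} Define the sender-preferred selections
\[
\hat v(\mu)=\min_{a\in a^*(\mu)}\mathbb{E}_\mu[v(a,t)],
\]
and the analogous minimum over profiles $\{a_t\}\in\prod_t a_t^*(\mu)$ for (\ref{eq:persuasion with one action per state}). This is well defined because $A$ is finite. Take $\mu_n\to\mu$, and pass to a subsequence realizing $\liminf \hat v(\mu_n)$ along which the selected actions are constant, which is possible by finiteness of $A$. Upper hemicontinuity (closed graph) places the limit action in $a^*(\mu)$. Continuity of $\mu\mapsto\mathbb{E}_\mu[v(a,t)]$ for fixed $a$ then gives $\liminf\hat v(\mu_n)\ge \hat v(\mu)$. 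The existence conclusion follows exactly as in the Corollary of Section \ref{sec:analysis:multiplicity}.

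The main obstacle is Step 1. The dual optimizer depends on the ex ante optimal loss $X_{a(\mu)}$, which itself may jump with $\mu$, so a single continuous $Z^*(\cdot)$ is not automatic. I would handle this by reading the uniqueness hypothesis as fixing the revision through the ex ante action computed under the prior mixture, as stated in the remarks after (\ref{eq:persuasion with one action per state}). The random loss in (\ref{eq:decomposition given mu}) is then $\mu$-independent, and Berge's theorem applies as in Lemma \ref{lemma:hemicontinuity of Z(mu)}. If the loss is instead allowed to vary with $\mu$, I would argue on each region where the ex ante action is constant and use the sender-preferred tie-breaking at boundaries, in the spirit of Lemma \ref{lemma: v hat lower semi}, to retain lower semicontinuity.
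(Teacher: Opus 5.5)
Your proposal is correct and follows the paper's proof. The chain is the same: uniqueness plus Berge's theorem gives continuity of $\mu\mapsto\{Z_t^*(\mu)\}_{t\in T}$; continuity of AV@R in its level makes the receiver's objectives continuous; Berge over the finite set $A$ gives an upper hemicontinuous argmin; and sender-preferred selection yields lower semicontinuity of $\hat v$. Two parts of your write-up are more explicit than the paper. One is the subsequence argument. The other is your remark that the ex ante loss may depend on $\mu$, which the paper handles only through its standing assumption that revision uses the ex ante action fixed at the prior.

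The one link you skip is in Step 1. The hypothesis is uniqueness of $Z^*$ in the dual representation (\ref{eq:dual representation risk measure}), not uniqueness of $\{Z_t^*\}$ in (\ref{eq:decomposition given mu}) as you state "by assumption." The paper closes this with Remark 23 of \citet{pflug2016time}, and you can also see it directly. If $\{Z_t\}$ is optimal in the decomposition and $Z'$ attains each conditional AV@R, then $Z_tZ'$ is feasible and optimal in (\ref{eq:dual representation risk measure}), hence equals $Z^*$. This forces $Z_t=\mathbb{E}[Z^*|t]$ for every $t$ with $\mu(t)>0$.
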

\proof{Proof of Theorem \ref{thm:existence of optimal value with action}.}
As the sender's problems (\ref{eq:persuasion with one action per belief}) and (\ref{eq:persuasion with one action per state}) differs only in whether the receiver's action depends on the state, the analysis also only differs in this aspect.
Thus, we focus on problem (\ref{eq:persuasion with one action per belief}).
With a slight abuse of notation, we denote the average loss of the sender as $\hat{v}(\mu)=\mathbb{E}_{\mu}[v(\hat{a}(\mu),t)]$, where $\hat{a}(\mu)$ is the action that minimizes the sender's objective in the set of the optimal solutions $a(\mu)$ to the receiver's problem in the first constraint of (\ref{eq:persuasion with one action per belief}).
Then, it suffices to show that $\hat{v}$ is lower semicontinuous.
Since $A$ is fixed, by Berge's theorem, it suffices to show that the receiver's optimization problem has a continuous objective, \ie, $\mathbb{E}_{\mu}[\text{AV@R}_{\alpha, Z_t^*(\mu)}(X_a|t)]$ is a continuous function, and $a(\mu)$ is an upper hemicontinuous correspondence.
Then, as AV@R is continuous in its confidence level parameter, we need to show that the optimal dual variable $Z_t^*(\mu)$ is continuous.
Remark 23 in \citet{pflug2016time} indicates that $Z_t^*$ for all $t\in T$ is unique if we have $Z^*$ is unique.
Consequently, given that (\ref{eq:decomposition of avar}) has a continuous objective, continuity of $Z_t^*(\mu)$ follows. 
This certifies the existence of optimal solution to (\ref{eq:persuasion with one action per belief}).
The same analysis procedure also applies to problem (\ref{eq:persuasion with one action per state}).
Therefore, we arrive at the conclusion in the theorem.

\endproof

The existence of optimal solutions is not guaranteed in general once the assumption of uniqueness of the optimal dual variable is relaxed. The underlying reason is that the set‑valued mapping $\mathcal{Z}^*(\mu)$, which represents the set of optimal solutions to an optimization problem, need not be lower hemicontinuous. Consequently, a continuous selection of this correspondence may fail to exist.
Without the continuity property, we know from the proof of Theorem \ref{thm:existence of optimal value with action} that the set of optimal solutions to the receiver's problem may lack the required upper hemicontinuity.

Note that when receiver's actions are taken into account, one can also investigate the conditions under which the sender would benefit from persuasion.
As we have assumed that the action set $A$ is finite, the analysis would be same as in \citet{kamenica2011bayesian} under the assumption of the uniqueness of optimal dual variables.
When the action set is infinite, we may extend the approach in Section \ref{sec:analysis increase average risk} by employing additional assumptions on the monotonicity and growth of the loss functions of the sender and receiver and derive conditions on when the sender could benefit from persuasion.
However, the analysis would be lengthy and unlikely to yield any additional insights.

Finally, we remark here that, based on the belief recommendation argument discussed in Section \ref{sec:signal space}, the maximum number of signals that induced optimal persuasion in problems (\ref{eq:persuasion with one action per belief}) and (\ref{eq:persuasion with one action per state}) are upperbounded by the cardinality of the state space $|T|$.
Since problems (\ref{eq:persuasion with one action per belief}) and (\ref{eq:persuasion with one action per state}) have objectives that are nonlinear in beliefs, direct action recommendations cannot be used to construct optimal persuasion in general.
We refer the reader to the running example in \citet{anunrojwong2024persuading} for how nonlinearity in the receiver's objective function prevents using coalescence of actions.

\section{Application}
\label{sec:application}

In this section, we apply the Bayesian risk preference persuasion framework to a reinsurance design problem and investigate how persuaded risk preferences facilitate the reinsurer's design.

Reinsurance is a traditional financial mechanism that enables risk-sharing at the level of insurers. 
A reinsurance contract contains a premium payment to the reinsurer payed by the participating insurers in exchange for the reinsurer's coverage of a portion of the random financial losses faced by the insurers.
As a pre-incident risk management strategy, reinsurance shields insurers from insolvency in the event of widespread or catastrophic disasters, thereby enhancing the overall stability of the financial market.
Risk measures have been widely used to capture the insurer's preferences (see, e.g., \citet{cai2020optimal,chi2011optimal}).
However, the instability of risk preferences has been overlooked in standard models.
A related recent work by \citet{su2025continuous} investigates a reinsurance problem in which risk preferences are known up to incomplete information. 
They have focused on the competition among multiple reinsurers.

We build on the literature but  emphasize the role of information design in shaping insurers' risk preferences for enhancing reinsurance design. 
We adopt the setting of problem (\ref{eq:persuasion with one action per state}) where the sender's objective function depends on the profile of state-dependent actions from the receiver induced by her revised preferences.
To this end, we first introduce elements in a standard reinsurance model.

\subsection{Standard reinsurance design framework}
\label{sec:app:standard model}
Consider the scenario involving one reinsurer and one insurer. 
Suppose that the reinsurer can resort to risk preference persuasion, in addition to the contract, to aid his design of the reinsurance plan.
Let $X$ denote the random loss.
The objective is to design a reinsurance indemnity $I(X)$ that determines the amount of risk ceded to the reinsurer.
Assume that admissible indemnity functions lie in the set 
\begin{equation*}
    \mathcal{I}:=\{I: [0, \esssup(X))\rightarrow \mathbb{R}_+ | I(0)=0 \text{ and } 0\leq I(x)-I(y)\leq x-y, \forall 0\leq y\leq x \}.
\end{equation*}
This choice is commonly adopted (see, e.g., \citet{chi2011optimal, su2025continuous}) and and admissible indemnity function $I(\cdot)\in \mathcal{I}$ prevents moral hazard issues. 

Given the loss ceded to the reinsurer $I(\cdot)$, the financial loss faced by the reinsurer and the insurer are $I(X)-h(I(X))$ and $X+h(I(X))-I(X)$, respectively, where $h(\cdot)$ determines the premium payment charged.
We adopt the expected-value premium principal represented by 
\begin{equation*}
    h(I(X))=(1+\kappa)\mathbb{E}[I(X)]
\end{equation*}
with a safety loading coefficient $\kappa>0$.
There are other commonly-used payment rules, such as the variance  premium principle and the standard deviation premium principle.
However, we use the expected-value premium principal to keep the model simple in order to focus on preference persuasion.

Suppose that the insurer's risk preference is described by a risk measure $\text{AV@R}_\alpha(\cdot)$.
The reinsurance design problem can be summarized as 
\begin{equation}
    \min_{I\in\mathcal{I}} \text{AV@R}_\alpha(X+h(I(X))-I(X)).
    \label{eq:general reinsurance design problem}
\end{equation}
From \citet{chi2011optimal}, we know that the optimal solution to (\ref{eq:general reinsurance design problem}) is 
\begin{equation}
    I^*(x)=\begin{cases}
        (x- \text{V@R}_{\frac{\kappa}{1+\kappa}}(X))_+, & \text{ if } \alpha>\frac{\kappa}{1+\kappa},
        \\
        0, & \text{ if } \alpha\leq \frac{\kappa}{1+\kappa},
    \end{cases}
    \label{eq:optimal I of general insurance}
\end{equation}
with corresponding optimal risk represented by 
\begin{equation}
    \text{AV@R}_\alpha(X+h(I^*(X))-I^*(X))=\begin{cases}
        b^*, & \text{ if } \alpha>\frac{\kappa}{1+\kappa},
        \\
        \text{AV@R}_\alpha(X), & \text{ if }\alpha\leq \frac{\kappa}{1+\kappa},
    \end{cases}
    \label{eq:optimal risk given I*}
\end{equation}
where $b^*=d^*+(1+\kappa)\mathbb{E}[(X-d^*)_+]$ with $d^*=\text{V@R}_{\frac{\kappa}{1+\kappa}}(X)$.

\subsection{Reinsurance with preference persuasion}
\label{app:persuasion in reinsurance}
Building on the above standard reinsurance framework, we introduce a state space containing two distinct states $T=\{t_1 ,t_2\}$.
Conditional on a state $t\in T$, we assume that the random loss $X$ follows an exponential distribution whose probability density function is denoted $P(\cdot|t)$ with known rate parameter $t$.
Thus, given $t\in T$, quantification of the randomness $X$ is performed with respect to $P(\cdot|t)$, which is a conditional distribution as illustrated by the notation.
For example, under state $t_1$, the expectation of the random loss is $\mathbb{E}[X|t_1]=\frac{1}{t_1}$ due to properties of the exponential distribution. 
%This is the model setting adopted in Section \ref{sec:framework}.
Without loss of generality, we assume that $0<t_1<t_2$.
Since $t$ represents the rate parameter, this assumption indicates that state $t_1$ is more risky than $t_2$ as $P(\cdot|t_1)$ stochastically dominates $P(\cdot|t_2)$.

At the ex ante stage in which an observation of the state has not been made yet, risk is evaluated on the basis of a belief about the likelihood of the states. 
Suppose that a belief $\mu=(q, 1-q)$ for $0\leq q\leq 1$ is given.
Then, risks associated with the randomloss $X$ is evaluated with respect to the mixture distribution $\mu\circ P$, which admits a density function
\begin{equation*}
    f_X(x)=qt_1e^{-t_1x}+(1-q)t_2e^{-t_2x}, \text{ for } x\in [0,+\infty).
\end{equation*}
The unconditional expectation under this distribution is denoted $\mathbb{E}[X]=\frac{q}{t_1}+\frac{1-q}{t_2}$.

The reinsurer is risk-neutral and uses the expectation risk measure $\mathbb{E}(\cdot)$ to evaluate risks. 
The insurer, on the other hand, is assumed to be risk-averse and her initial risk preference is assume to be $\text{AV@R}_\alpha$.
Whether the respective reference probability measure is conditional on an observed state depends on when the risk quantification is performed. 
This timing will be clear from our notations.

\paragraph{Preference revision given belief.}
Preference revision depends on insurer's initial preference, the belief about the states which induces the mixture distribution, and the ex ante action which determines the loss. 
Recall that, in Section \ref{sec:analysis with action}, we assumed that the ex ante action that determines the loss vector for preference revision is the optimal state-independent action.
We maintain his assumption in this application.

Let $\nu=\text{V@R}_\alpha(X)$.
Using the probability density $f_X(x)$ and properties of the exponential distribution, $\nu$ can be shown to satisfy
\begin{equation}
    qe^{-t_1\nu}+(1-q)e^{-t_2\nu}=1-\alpha.
    \label{eq:nu satisfy this eq}
\end{equation}
Equation (\ref{eq:nu satisfy this eq}) has no analytical solution in $\nu$ in general.
The next result shows the monotonicity of $\nu$ as a function of the probability $q$.
The proof is presented in the Appendix.
\begin{lemma}
\label{lemma:nu is increasing in q}
The value of $\nu$ that satisfies (\ref{eq:nu satisfy this eq}) is strictly increasing in $q$.
\end{lemma}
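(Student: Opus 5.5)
We regard $\nu$ as defined implicitly by $F(\nu,q)=0$, where
\begin{equation*}
F(\nu,q):=qe^{-t_1\nu}+(1-q)e^{-t_2\nu}-(1-\alpha),
\end{equation*}
and show it has positive derivative in $q$. We assume $\alpha\in(0,1)$, so that $1-\alpha\in(0,1)$.

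\textbf{Existence and uniqueness.} For fixed $q\in[0,1]$, the map $\nu\mapsto F(\nu,q)$ is the survival function of the mixture $\mu\circ P$ minus $(1-\alpha)$. It is continuous and strictly decreasing on $[0,\infty)$, because
\begin{equation*}
\partial_\nu F=-qt_1e^{-t_1\nu}-(1-q)t_2e^{-t_2\nu}<0.
\end{equation*}
It also satisfies $F(0,q)=\alpha>0$ and $F(\nu,q)\to-(1-\alpha)<0$ as $\nu\to\infty$. Hence (\ref{eq:nu satisfy this eq}) has a unique root $\nu(q)\in(0,\infty)$, and this root coincides with $\text{V@R}_\alpha(X)$ because the mixture density is positive.

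\textbf{Sign of the derivative.} Since $F$ is smooth and $\partial_\nu F\neq 0$, the implicit function theorem makes $\nu(q)$ differentiable, with
\begin{equation*}
\nu'(q)=-\frac{\partial_q F}{\partial_\nu F}=\frac{e^{-t_1\nu}-e^{-t_2\nu}}{qt_1e^{-t_1\nu}+(1-q)t_2e^{-t_2\nu}}.
\end{equation*}
The denominator is positive. Because $0<t_1<t_2$ and $\nu>0$, we have $e^{-t_1\nu}>e^{-t_2\nu}$, so the numerator is also positive. Therefore $\nu'(q)>0$ on $[0,1]$, and $\nu$ is strictly increasing.

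\textbf{Derivative-free alternative.} One can also argue by comparison. Take $q<q'$ and $\nu=\nu(q)$. Then
\begin{equation*}
F(\nu,q')-F(\nu,q)=(q'-q)\bigl(e^{-t_1\nu}-e^{-t_2\nu}\bigr)>0,
\end{equation*}
so $F(\nu,q')>0$. Since $F(\cdot,q')$ is strictly decreasing and vanishes at $\nu(q')$, this forces $\nu(q')>\nu$. This version also covers the endpoints $q=0,1$ without any differentiability concerns.

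\textbf{Main point to check.} The only step needing care is the strict positivity of $\nu$, which is what makes the numerator strictly positive. It follows from $F(0,q)=\alpha>0$, which is where the assumption $\alpha>0$ is used. In the degenerate case $\alpha=0$ we would get $\nu\equiv 0$, and strict monotonicity would fail.
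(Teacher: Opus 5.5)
Your proposal is correct and follows the paper's own proof, which applies the implicit function theorem to $H(\nu,q)=qe^{-t_1\nu}+(1-q)e^{-t_2\nu}-(1-\alpha)$ and uses $\partial_q H>0$ and $\partial_\nu H<0$ to conclude $\dd\nu/\dd q>0$. Your additions fill in details the paper leaves implicit: uniqueness of the root, the fact that $\partial_q H>0$ needs $\nu>0$ and hence $\alpha>0$ (the lemma fails at $\alpha=0$), and the derivative-free comparison that covers the endpoints $q=0,1$.
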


The following result locates the revised risk preferences of the insurer contingent on observed states.
Despite the fact that the optimal indemnity function (\ref{eq:optimal I of general insurance}) is piecewise depending on the relation between the confidence level and the safety loading coefficient, we show that the revised risk preferences on respective domains coincide.

\begin{lemma}
\label{lemma:revised preferences under all alpha}  
Suppose that the insurer's risk preference is revised based on the ex ante loss vector induced by the optimal ex ante action.
Then, the revised risk preferences are identified with the following confidence levels $\alpha_t$ for $t\in T$:
\begin{equation*}
    \alpha_t=\begin{cases}
        1-e^{t_1\nu}, & \text{ if } t=t_1,
        \\
        1-e^{t_2\nu}, & \text{ if } t=t_2.
    \end{cases}
\end{equation*}
\end{lemma}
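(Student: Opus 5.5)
The plan is to compute the revised confidence levels directly from the conditional AV@R at random level (\ref{eq:avar at random level}), namely $\alpha_t = 1-(1-\alpha)Z_t$. Here $Z_t=\mathbb{E}[Z^*\mid t]$ is the $\mathcal{F}_\tau$-conditional expectation of an optimal dual density $Z^*$ for $\text{AV@R}_\alpha$ of the ex ante loss, taken under the mixture $Q=\mu\circ P$. This is the same way $Z_{t_1},Z_{t_2}$ were obtained in the example of Section \ref{sec:example}. The argument has three steps.

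\textbf{Step 1: comonotonicity of the ex ante loss.} Let $I^*$ be the optimal state-independent indemnity (\ref{eq:optimal I of general insurance}) computed under $Q$. The ex ante loss is
\[
L(X)=X+h(I^*(X))-I^*(X).
\]
Since $I^*\in\mathcal{I}$ is $1$-Lipschitz and nondecreasing, $x\mapsto x-I^*(x)$ is nondecreasing, and the premium is a constant. So $L$ is a nondecreasing function of $X$ in both regimes:
\begin{itemize}
\item if $\alpha\le\frac{\kappa}{1+\kappa}$, then $L(X)=X$;
\item if $\alpha>\frac{\kappa}{1+\kappa}$, then $L(X)=\min(X,d^*)+(1+\kappa)\mathbb{E}[(X-d^*)_+]$.
\end{itemize}
For a loss comonotone with $X$, where $X$ has a continuous distribution under $Q$ (a mixture of exponentials), the standard quantile argument gives an optimal dual density
\[
Z^*=\frac{1}{1-\alpha}\mathbf{1}\{X>\nu\},\qquad \nu=\text{V@R}_\alpha(X).
\]
It attains $\text{AV@R}_\alpha(L(X))=\mathbb{E}_Q[L(X)Z^*]$, because it puts maximal weight $\frac{1}{1-\alpha}$ on the upper $(1-\alpha)$-tail of $X$, and hence of $L(X)$. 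The same $Z^*$ therefore works in both regimes. This is why the revised preferences coincide on the two domains of (\ref{eq:optimal I of general insurance}).

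\textbf{Step 2: conditioning on the state.} Write $Z_t=\mathbb{E}_Q[Z^*\mid \tau=t]$. This gives
\[
Z_t=\frac{P(X>\nu\mid t)}{1-\alpha}=\frac{e^{-t\nu}}{1-\alpha},
\]
using the exponential tail. The constraints of $\mathcal{Z}(\mu)$ should then be checked:
\begin{itemize}
\item $\mathbb{E}_\mu[Z_t]=\frac{qe^{-t_1\nu}+(1-q)e^{-t_2\nu}}{1-\alpha}=1$, by (\ref{eq:nu satisfy this eq});
\item $0\le(1-\alpha)Z_t\le 1$, which is immediate.
\end{itemize}
Optimality in (\ref{eq:decomposition of avar}) follows from the tower property, which gives $\mathbb{E}_\mu[Z_t\,\text{AV@R}_{1-(1-\alpha)Z_t}(L\mid t)]=\text{AV@R}_\alpha(L)$. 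Substituting into $\alpha_t=1-(1-\alpha)Z_t$ gives
\[
\alpha_t=1-e^{-t\nu}.
\]
The stated formula should be read with the exponent $-t\nu$; as written, $1-e^{t\nu}<0$ would not be an admissible confidence level. I would flag this sign in the proof.

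\textbf{Main obstacle.} I expect the hard part to be the regime $\alpha>\frac{\kappa}{1+\kappa}$. There $L$ is constant on $\{X\ge d^*\}$, and since $\nu\ge d^*$ the optimal dual density is not unique: any redistribution of mass within the flat region $\{X\ge d^*\}$ is also optimal. This conflicts with the uniqueness assumption of Theorem \ref{thm:existence of optimal value with action}. I would resolve it by selecting the canonical comonotone density $\frac{1}{1-\alpha}\mathbf{1}\{X>\nu\}$. This is the selection consistent with the loss being evaluated as a nondecreasing transform of $X$; equivalently, it is the limit of the unique duals for strictly increasing perturbations $L_\varepsilon(X)=L(X)+\varepsilon X$. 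I would state explicitly that the lemma refers to this selection.
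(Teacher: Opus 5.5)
Your proposal is correct and follows the paper's proof. In both regimes the paper also takes the comonotone dual $Z^*=\frac{1}{1-\alpha}\mathbf{1}\{X\ge\nu\}$, justifying it in the regime $\alpha>\frac{\kappa}{1+\kappa}$ via $\nu>d^*$, then conditions on the state to get $Z_t^*=e^{-t\nu}/(1-\alpha)$ and $\alpha_t=1-e^{-t\nu}$; your flags on the sign of the exponent and on the non-uniqueness of the dual in the flat regime, which the paper resolves only implicitly through this same selection, are both accurate.
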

\proof{Proof of Lemma \ref{lemma:revised preferences under all alpha}.}
We proceed by deriving the revised confidence levels for the cases where $\alpha>\frac{\kappa}{1+\kappa}$ and $\alpha\leq\frac{\kappa}{1+\kappa}$ separately. Let $Y=X+h(I^*(X))-I^*(X)$.
\\
Suppose $\alpha>\frac{\kappa}{1+\kappa}$ holds.
Then, by (\ref{eq:optimal I of general insurance}), we obtain $Y=\min(X, d^*)+(1+\kappa)\mathbb{E}[(X-d^*)_+]$.
We denote $c=(1+\kappa)\mathbb{E}[(X-d^*)_+]$ as this quantity is a constant given $\alpha, \mu\circ P$, and $\kappa$.
If $X\leq d^*$, $Y=X+c$.
If $X>d^*$, $Y=d^*+c$.
Since $\esssup(Y)=d^*+c$, the upper tail of $Y$ is flat. 
Because of the definition of V@R, we have $\mathbb{P}(X>d^*)=1-\frac{\kappa}{1+\kappa}=\frac{1}{1+\kappa}$.
Thus, $\mathbb{P}(Y=d^*+c)=\frac{1}{1+\kappa}$.
Furthermore, $\alpha>\frac{\kappa}{1+\kappa}$ indicates $1-\alpha<\frac{1}{1+\kappa}$, meaning that the probability of the worst $1-\alpha$ tail is strictly smaller than the mass of the flat region, which has probability $\frac{1}{1+\kappa}$.
Consequently, $\text{V@R}_\alpha(Y)=d^*+c$ and $\text{AV@R}_\alpha(Y)=d^*+c$.
This means that the dual variable associated with $\text{AV@R}_\alpha(Y)$ is $\Tilde{Z}^*=\frac{1}{1-\alpha}\1_{\{x>d^*\}}$.
On the other hand, under the mixture distribution density $f_X(x)$, the ex ante risk of $X$ induced by belief $\mu$ is 
\begin{equation*}
    \text{AV@R}_\alpha(X)=\nu +\frac{1}{1-\alpha}\Big[ \frac{q}{t_1}e^{-t_1\nu}+\frac{1-q}{t_2}e^{-t_2\nu} \Big],
\end{equation*}
whose associated dual variable is $Z^*=\frac{1}{1-\alpha}\1_{\{x\geq \nu\}}$ with $\nu$ satisfying (\ref{eq:nu satisfy this eq}).
Observe that $d^*$ is equivalent to solving (\ref{eq:nu satisfy this eq}) for $\nu$ with the right-hand side of (\ref{eq:nu satisfy this eq}) modified to $\frac{1}{1+\kappa}$ and we have that $\alpha>\frac{\kappa}{1+\kappa}$, it holds that $\nu>d^*$.
Therefore, the right choice of dual variable is $Z^*=\frac{1}{1-\alpha}\1_{\{x\geq \nu\}}$, thereby $Z_t^*=\mathbb{E}[Z^*|t]=\frac{1}{1-\alpha}\mathbb{P}(X\geq \nu|t)=\frac{e^{-t\nu}}{1-\alpha}$.
Then, $\alpha_t=1-(1-\alpha)Z_t^*=1-e^{-t\nu}$ for $t\in T$.
\\
Suppose that $\alpha\leq\frac{\kappa}{1+\kappa}$  holds. 
Then $\text{AV@R}_\alpha(Y)=\text{AV@R}_\alpha(X)$ under the optimal indemnity.
Similar as above, we obtain the optimal dual variable $Z^*=\frac{1}{1-\alpha}\1_{\{x\geq \nu\}}$ with $\nu$ satisfying (\ref{eq:nu satisfy this eq}).
However, since in this case $\alpha\leq \frac{\kappa}{1+\kappa}$, we have $\nu\leq d^*$.
Therefore, $Z_t^*=\mathbb{E}[Z^*|t]=\frac{e^{-t\nu}}{1-\alpha}$.
This will lead to the same expression for $\alpha_t$ and the proof is complete. 

\endproof

\paragraph{Reinsurer's value.}
The revised preferences characterized by $\alpha_t$ for $t\in T$ leads to an interim version of the optimal reinsurance.
Compared to the original problem (\ref{eq:general reinsurance design problem}), the difference lies in that the problem is now concerning the conditional distributions $P(\cdot|t)$ instead of the mixture distribution $\mu\circ P$ and optimal reinsurance is determined contingent on observed state.
Note that if one adopts the most generic model, then there may also include a state-dependent safety loading coefficient $\kappa_t$.
However, to simplify presentation and focus on the design of information, we assume that $\kappa=\kappa_t$ for all $t\in T$.
We will use $I_t(\cdot)$ to denote the state-dependent indemnity function under state $t$, in which the reference probability measure for the V@R and AV@R values is $P(\cdot|t)$.
Then, the optimal state-dependent indemnity function satisfies 
\begin{equation*}
     I_t^*(x)=\begin{cases}
        (x- \text{V@R}_{\frac{\kappa}{1+\kappa}}(X|t))_+, & \text{ if } \alpha_t>\frac{\kappa}{1+\kappa},
        \\
        0, & \text{ if } \alpha_t\leq \frac{\kappa}{1+\kappa}.
    \end{cases}
\end{equation*}
For a given belief $\mu$, the reinsurer's expected loss under optimal indemnity $I_t^*$ is
\begin{equation*}
    \begin{aligned}
        v(\mu)&=\mathbb{E}_\mu
        \Big[ \mathbb{E}_{P(\cdot|t)}[I_t^*(X)-h(I_t^*(X)))] \Big]
        \\
        &=
         \mathbb{E}_\mu
        \Big[ \mathbb{E}_{P(\cdot|t)}\big[
        I_t^*(X)-(1+\kappa)\mathbb{E}_{P(\cdot|t)}[I_t^*(X)]
        \big]\Big]
        \\
        &=-\kappa\mathbb{E}_{\mu}\mathbb{E}_{P(\cdot|t)}[I_t^*(X)].
    \end{aligned}
\end{equation*}

Depending on the value of the revised confidence level $\alpha_t$, we have
\begin{equation}
    \mathbb{E}_{P(\cdot|t)}[I_t^*(X)]=\begin{cases}
        \frac{e^{-td_t^*}}{t}, & \text{ if } \alpha_t>\frac{\kappa}{1+\kappa}, \\
        0, & \text{ if } \alpha_t\leq \frac{\kappa}{1+\kappa},
    \end{cases}
    \label{eq:value is 0 or + based on alpha}
\end{equation}
where $d_t^*=\text{V@R}_{\frac{\kappa}{1+\kappa}}(X|t)=\frac{1}{t}ln(1+\kappa)$.
Thus, $ \mathbb{E}_{P(\cdot|t)}[I_t^*(X)]=\frac{1}{t(1+\kappa)}$ if $\alpha_t>\frac{\kappa}{1+\kappa}$.

As $\frac{1}{t(1+\kappa)}>0$, it is straightforward to observe that the reinsurer prefers to persuade the insurer to increase her revised confidence levels $\alpha_t$ if he aims to minimize his expected loss $v(\mu)$.
Nevertheless, persuasion is constrained by Bayes-plausibility.
It is not immediately obvious whether the reinsurer can benefit from performing persuasion.

The revised confidence level $\alpha_t=1-e^{-t\nu}$ is strictly increasing in $t$ and $\nu$.
As we have assumed that $t_1<t_2$ and the optimal dual variables satisfy $\mathbb{E}[Z_t^*]=1$, we have $\alpha_{t_1}<\alpha<\alpha_{t_2}$.
Consequently, the reinsurer is likely to benefit if the induced beliefs cause more revised confidence levels under state $t_2$ to exceed $\frac{\kappa}{1+\kappa}$ than they cause those under state $t_1$ to fall below $\frac{\kappa}{1+\kappa}$.

In the following, we will investigate whether it is beneficial to perform persuasion and how to design optimal information in detail.

\paragraph{Information design.}

Define the following critical probability values that are useful in the analysis:
\begin{equation*}
    \Bar{q}:=\frac{1-\alpha-(1+\kappa)^{-t_2/t_1}}{\frac{1}{1+\kappa}-(1+\kappa)^{-t_2/t_1}}
    \text{, }
    \underline{q}:=\frac{1-\alpha-\frac{1}{1+\kappa}}{(1+\kappa)^{-t_1/t_2}-\frac{1}{1+\kappa}}
    \text{, and }
    \Tilde{q}:=\frac{1}{1+1/t_1 +(1/\Bar{q}-1)/t_2}.
\end{equation*}
The following lemma establishes a property required to derive the results in this section. Its proof is postponed to the Appendix.
\begin{lemma}
\label{lemma:q underline < q bar}
It holds that $\underline{q}<\Bar{q}$.
\end{lemma}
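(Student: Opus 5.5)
The approach is to read both critical values as solutions of the quantile equation (\ref{eq:nu satisfy this eq}) at two different values of $\nu$, and then compare them using monotonicity. Write $a:=\frac{1}{1+\kappa}\in(0,1)$ and $c:=1-\alpha$, and set
\begin{equation*}
g(q,\nu):=qe^{-t_1\nu}+(1-q)e^{-t_2\nu}.
\end{equation*}
Equation (\ref{eq:nu satisfy this eq}) then reads $g(q,\nu)=c$. Introduce the two levels $\bar\nu:=\frac{1}{t_1}\ln(1+\kappa)$ and $\underline\nu:=\frac{1}{t_2}\ln(1+\kappa)$. These are $d_{t_1}^*$ and $d_{t_2}^*$, and since $0<t_1<t_2$ we have $0<\underline\nu<\bar\nu$.

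The first step is to check the identifications. At $\bar\nu$ we have $e^{-t_1\bar\nu}=a$ and $e^{-t_2\bar\nu}=a^{t_2/t_1}=(1+\kappa)^{-t_2/t_1}$. Solving the linear equation $g(q,\bar\nu)=c$ for $q$ gives exactly $\bar q$. At $\underline\nu$ we have $e^{-t_2\underline\nu}=a$ and $e^{-t_1\underline\nu}=(1+\kappa)^{-t_1/t_2}$. Solving $g(q,\underline\nu)=c$ gives $q=\frac{c-a}{(1+\kappa)^{-t_1/t_2}-a}$, which is $\underline q$. Both denominators are positive, since $e^{-t_1\nu}>e^{-t_2\nu}$ for every $\nu>0$. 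Hence $g$ is strictly increasing in $q$ at each of these two levels.

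The comparison then runs as follows. $g(q,\cdot)$ is strictly decreasing in $\nu$ whenever $q\in[0,1]$, because
\begin{equation*}
\partial_\nu g=-qt_1e^{-t_1\nu}-(1-q)t_2e^{-t_2\nu}<0.
\end{equation*}
Applying this at $q=\underline q$ gives
\begin{equation*}
g(\underline q,\bar\nu)<g(\underline q,\underline\nu)=c=g(\bar q,\bar\nu).
\end{equation*}
Strict monotonicity of $g(\cdot,\bar\nu)$ in $q$ then yields $\underline q<\bar q$. Equivalently, when both values lie in $[0,1]$, Lemma \ref{lemma:nu is increasing in q} says the map $q\mapsto\nu(q)$ is strictly increasing. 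Because $\nu(\underline q)=\underline\nu<\bar\nu=\nu(\bar q)$, the ordering $\underline q<\bar q$ follows immediately.

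The main obstacle is the range of the critical values. Nothing in the definitions forces $\underline q$ or $\bar q$ into $[0,1]$, and for $q$ outside $[0,1]$ the sign of $\partial_\nu g$ is no longer clear. To make the argument unconditional, I would first try a direct algebraic comparison. Both $\bar q$ and $\underline q$ are affine in $c$, so $\bar q-\underline q$ is affine in $c$ and it is enough to examine its slope and one convenient value. At $c=a$ we get $\underline q=0$ and $\bar q=\frac{a-a^{r}}{a-a^{r}}=1$, where $r=t_2/t_1>1$; so $\bar q-\underline q=1>0$ there. The slope of $\bar q-\underline q$ in $c$ is $\frac{1}{a-a^r}-\frac{1}{a^{1/r}-a}$. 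To control it I would compare $a-a^r$ with $a^{1/r}-a$ using convexity of $x\mapsto a^x$.

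If that slope comparison does not close for every $c$, I would instead restrict to the parameter regime used in the rest of the section. There $\alpha$ lies between the revised levels and $\frac{\kappa}{1+\kappa}$, which makes $c$ lie between $a^r$ and $a^{1/r}$. In that regime the monotone argument above applies, after checking that $\underline q\in[0,1]$ so that $\partial_\nu g<0$ is valid at $q=\underline q$.
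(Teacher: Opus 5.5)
\textbf{There is a genuine gap: the monotonicity argument only reaches the regime where the lemma is nearly trivial.} The argument is correct where it applies, but the lemma is stated for all $\alpha\in[0,1)$, $\kappa>0$, $0<t_1<t_2$, that is, for every $c\in(0,1]$. With your $r=t_2/t_1$:
\begin{itemize}
\item $\underline q\in[0,1]$ iff $a\le c\le a^{1/r}$, and $\bar q\in[0,1]$ iff $a^{r}\le c\le a$.
\item Both lie in $[0,1]$ only at $c=a$, so the reformulation via Lemma~\ref{lemma:nu is increasing in q} is essentially vacuous.
\item The step ``$\partial_\nu g<0$ at $q=\underline q$'' is available only for $c\in[a,a^{1/r}]$.
\end{itemize}
On the whole window $[a^{r},a^{1/r}]$ the conclusion already follows from these ranges alone: $\underline q<0\le\bar q$ if $c<a$, and $\underline q\le 1<\bar q$ if $c>a$.

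The real content of the lemma lies in two other regimes. For $c<a^{r}$ both values are negative, and for $c>a^{1/r}$ both exceed $1$. There $\partial_\nu g(\underline q,\cdot)$ is no longer sign-definite, and your argument says nothing.

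\textbf{Neither fallback closes this.} Restricting to the regime used in the section proves a weaker statement. The algebraic plan, one value plus the slope, cannot certify positivity on $(0,1]$, because $c=a$ is an interior point. Moreover, the slope of $\bar q-\underline q$ in $c$ has the sign of $a^{r}+a^{1/r}-2a$. This is negative for $a=\tfrac12$, $r=2$, but positive for $a=e^{-5}$, $r=2$. So no convexity argument on $x\mapsto a^x$ can give it a uniform sign.

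\textbf{The missing step is an endpoint check.} Since $\bar q-\underline q$ is affine in $c$, it suffices to show it is positive at $c=0$ and at $c=1$.
\begin{itemize}
\item At $c=0$, positivity reduces to $a^{r+1/r}<a^{2}$, which holds since $r+1/r>2$.
\item At $c=1$, it reduces to $(1-a^{r})(1-a^{1/r})<(1-a)^{2}$. This is the nontrivial inequality, and it is exactly what the paper's proof establishes as $\delta(1)<0$. The paper differentiates in the ratio $t_1/t_2$ and uses that the expression vanishes when $t_1=t_2$.
\end{itemize}
A shorter route to the $c=1$ inequality uses $y\mapsto\ln(1-a^{e^{y}})$. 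Its derivative is $w/(e^{w}-1)$ with $w=-e^{y}\ln a$, which is strictly decreasing in $y$, so the function is strictly concave. Comparing its values at $y=\pm\ln r$ with its value at $y=0$ gives the inequality. Your plan never identifies this inequality, so as written it does not establish the lemma.
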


Let $\alpha_t$ for $t\in T$ denote the revised confidence levels induced by the prior belief $\mu_0$.
Due to the piecewise structure of the optimal indemnity function, we investigate expected losses of the reinsurer and the signals that induce the values under the following three cases.
Case (1): $\frac{\kappa}{1+\kappa}<\alpha_{t_1}$; Case (2): $\alpha_{t_1}<\frac{\kappa}{1+\kappa}<\alpha_{t_2}$; and Case (3): $\alpha_{t_2}<\frac{\kappa}{1+\kappa}$.
Conditions on the parameter $q_0$ of the prior belief that lead to these cases are:
in Case (1), $q_0>\Bar{q}$; in Case (2), $\underline{q}<q_0<\Bar{q}$; in Case (3), $q_0<\underline{q}$.
Note that these conditions follows from algebraic manipulation knowing that $\alpha_t$ is specified in Lemma \ref{lemma:revised preferences under all alpha} and $\nu$ satisfies (\ref{eq:nu satisfy this eq}) with $q$ replaced by $q_0$.
To simplify the presentation and reduce the scenarios to analyze, we further assume that $q_0<\Tilde{q}$ in that Case (2) and that $t_2>t_1(1/\underline{q}-1/\Bar{q})$ in Case (3).

Similar as in Section \ref{sec:example}, a two-element state spaces requires at most two distinct beliefs recommendations in the optimal signal rule. 
Let the support of the distribution of posterior beliefs induced by the signal rule be $\text{supp}(\eta)=\{\mu', \mu''\}$.
By Bayes-plausibility, the prior will be split as $\mu_0=\gamma\mu'+(1-\gamma)\mu''$ where $\mu_0=(q_0, 1-q_0)$, $\mu'=(q', 1-q')$, and $\mu''=(q'', 1-q'')$.
Hence, we have $q_0=\gamma q'+(1-\gamma)q''$.
We also assume that $q'<q''$ so that belief $\mu''$ assigns higher probability to state $t_1$, the riskier state, than belief $\mu'$. 
We will also refer to $\mu''$ as the higher belief.

If the reinsurer does not perform persuasion, his expected loss is $v(\mu_0)$.
Note that there is still risk preference revision in this scenario and the revision is associated with the prior belief $\mu_0$.
A distribution of beliefs $\eta$ (strictly) benefits the reinsurer  if $\mathbb{E}_\eta v(\mu)<v(\mu_0)$.
The next result summarizes the conditions in the three cases under which the reinsurer benefits from performing risk preference persuasion.

\begin{theorem}
\label{thm:conditions for benefiting reinsurer}  
The following statements hold.
In Case (1), the reinsurer can never strictly benefit and he preserve the original expected loss if $q'\geq \Bar{q}$.
In Case (2), the reinsurer strictly benefits if $q'> \underline{q}$ and $q''> \Bar{q}$.
In Case (3), the reinsurer strictly benefits if $q''>\underline{q}$.
\end{theorem}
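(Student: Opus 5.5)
The plan is to reduce everything to a piecewise description of the reinsurer's value $v(\mu)$ as a function of $q$, and then compare $\gamma v(\mu')+(1-\gamma)v(\mu'')$ with $v(\mu_0)$ case by case.

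\textbf{Step 1: threshold crossings.} By Lemma~\ref{lemma:revised preferences under all alpha}, $\alpha_t=1-e^{-t\nu}$. Hence $\alpha_t>\frac{\kappa}{1+\kappa}$ is equivalent to $e^{-t\nu}<\frac{1}{1+\kappa}$, that is, $\nu>d_t^*=\frac{1}{t}\ln(1+\kappa)$. Substituting $\nu=d_{t_1}^*$ into (\ref{eq:nu satisfy this eq}) gives $q\frac{1}{1+\kappa}+(1-q)(1+\kappa)^{-t_2/t_1}=1-\alpha$, which is exactly $q=\Bar q$. Likewise, $\nu=d_{t_2}^*$ gives $q=\underline q$. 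Lemma~\ref{lemma:nu is increasing in q} says $\nu$ is strictly increasing in $q$, and Lemma~\ref{lemma:q underline < q bar} orders the two thresholds. Therefore, for a belief $(q,1-q)$:
\begin{itemize}
\item if $q>\Bar q$, both states are insured;
\item if $\underline q<q\le\Bar q$, only $t_2$ is insured;
\item if $q\le \underline q$, neither state is insured.
\end{itemize}
This also justifies the stated correspondence between the three cases and the position of $q_0$.

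\textbf{Step 2: explicit value function.} Using (\ref{eq:value is 0 or + based on alpha}), $v(\mu)=-\kappa\,\mathbb{E}_\mu\mathbb{E}_{P(\cdot|t)}[I_t^*]$, so
\begin{equation*}
v(q)=\begin{cases}-\frac{\kappa}{1+\kappa}\Big(\frac{q}{t_1}+\frac{1-q}{t_2}\Big), & q>\Bar q,\\ -\frac{\kappa}{1+\kappa}\,\frac{1-q}{t_2}, & \underline q<q\le\Bar q,\\ 0, & q\le\underline q.\end{cases}
\end{equation*}
This is piecewise affine. It is linear in $q$ on each region, and it jumps downward at $\underline q$ and at $\Bar q$ as $q$ increases: at $\Bar q$ the term $-\frac{\kappa}{1+\kappa}\frac{\Bar q}{t_1}$ switches on. Bayes-plausibility gives $q_0=\gamma q'+(1-\gamma)q''$, so on any single affine piece splitting yields exactly $v(q_0)$. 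All gains and losses therefore come from posteriors landing in different pieces.

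\textbf{Step 3: the three cases.}
\begin{itemize}
\item \emph{Case (1).} $v(q_0)$ equals the top affine piece $\ell_1(q)=-\frac{\kappa}{1+\kappa}\big(\frac{q}{t_1}+\frac{1-q}{t_2}\big)$. On the other regions $v$ is $\ell_1(q)+\frac{\kappa}{1+\kappa}\frac{q}{t_1}$ or $0$, and both are $\ge\ell_1(q)$. So $v\ge \ell_1$ everywhere, and by linearity $\mathbb{E}_\eta v\ge \ell_1(q_0)=v(q_0)$: no strict benefit. If $q'\ge\Bar q$ (the boundary point $q'=\Bar q$ has to be read with the convention of the strict inequality), both posteriors lie in the top piece and equality holds.
\item \emph{Case (2).} Here $v(q_0)=\ell_2(q_0)$ with $\ell_2(q)=-\frac{\kappa}{1+\kappa}\frac{1-q}{t_2}$. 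If $q'>\underline q$ and $q''>\Bar q$, then $v(q')=\ell_2(q')$ and $v(q'')=\ell_2(q'')-\frac{\kappa}{1+\kappa}\frac{q''}{t_1}$. Hence
\begin{equation*}
\mathbb{E}_\eta v=\ell_2(q_0)-(1-\gamma)\frac{\kappa}{1+\kappa}\frac{q''}{t_1}<v(q_0),
\end{equation*}
since $\gamma<1$ whenever $q''>q_0$.
\item \emph{Case (3).} Here $v(q_0)=0$. With $q''>\underline q$ we have $v(q'')<0$ and $v(q')\le 0$, so $\mathbb{E}_\eta v<0$. We need $1-\gamma>0$, which again follows from $q''>q_0$.
\end{itemize}

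\textbf{Main obstacle.} I expect the main obstacle to be Step 1: pinning down the exact direction of the threshold equivalences, given that the paper's formula for $\alpha_t$ has a sign typo in the exponent. I would first rederive $Z_t^*=e^{-t\nu}/(1-\alpha)$ to fix this, and then verify that the definitions of $\Bar q$ and $\underline q$ match the crossings $\nu=d_{t_1}^*$ and $\nu=d_{t_2}^*$. The assumptions $q_0<\Tilde q$ and $t_2>t_1(1/\underline q-1/\Bar q)$ should not be needed for these sufficient conditions. I expect them to matter only for the later optimal-value computations.
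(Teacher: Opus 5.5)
Your argument is the paper's own: locate the threshold crossings at $q=\Bar q$ (state $t_1$) and $q=\underline q$ (state $t_2$) via Lemma~\ref{lemma:nu is increasing in q}, then compare $\gamma v(\mu')+(1-\gamma)v(\mu'')$ with $v(\mu_0)$ piece by piece. Writing $v$ as an explicit piecewise-affine function improves Case (1): the minorant $v\ge\ell_1$ proves ``never strictly benefits'' rigorously, which the paper only argues informally. Your caveat at $q'=\Bar q$ is well taken. You are also right that $q_0<\Tilde q$ and $t_2>t_1(1/\underline q-1/\Bar q)$ play no role in these sufficient conditions. Two steps need repair.

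\emph{First}, in Cases (2) and (3) you justify $1-\gamma>0$ by $q''>q_0$. With $q'<q''$, Bayes-plausibility gives $\gamma=(q''-q_0)/(q''-q')$ and $1-\gamma=(q_0-q')/(q''-q')$, so $q''>q_0$ only yields $\gamma>0$. What you actually need is $q'<q_0$, i.e.\ that $\mu''$ genuinely lies in $\text{supp}(\eta)$. This is part of the setup ($\gamma\in(0,1)$), but you should invoke it. Without it ($q'=q_0$), both of your strict inequalities collapse to equalities.

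\emph{Second}, your Case (3) claim that $q''>\underline q$ forces $v(q'')<0$ fails at $q''=1$. In Case (3), $\underline q>q_0>q'\ge 0$ forces $1-\alpha>\frac{1}{1+\kappa}$, because the denominator of $\underline q$ is positive. By the definition of $\Bar q$ (positive denominator again), $\Bar q>1$ is equivalent to that same inequality. Hence state $t_1$ is never insured for any belief. On $(\underline q,1]$ your formula therefore reads $v(q)=-\frac{\kappa}{1+\kappa}\frac{1-q}{t_2}$, which vanishes at $q=1$. For $\mu''=(1,0)$ this gives $\mathbb{E}_\eta v(\mu)=0=v(\mu_0)$, so strict benefit holds only for $\underline q<q''<1$. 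The paper's proof has the same hole: it argues that $\alpha''_{t_2}$ exceeding the threshold suffices, but that is irrelevant when $\mu''(t_2)=0$. So the statement itself needs $q''<1$.

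Relatedly, because $\Bar q\notin[0,1]$ here, your Step 1 derivation via ``$\nu=d^*_{t_1}$ at $q=\Bar q$'' refers to a point outside the belief simplex. The bullets remain correct; the cleanest justification is that the left side of (\ref{eq:nu satisfy this eq}) is decreasing in $\nu$. Hence $\nu>d$ if and only if $qe^{-t_1d}+(1-q)e^{-t_2d}>1-\alpha$, an affine condition in $q$ that needs no range restriction.
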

\proof{Proof of Theorem \ref{thm:conditions for benefiting reinsurer}.}
A necessary condition for the reinsurer to benefit is that, under some state, the revised preference exceeds the threshold $\frac{\kappa}{1+\kappa}$ while the original preference under that state falls below it.
In the sequel, we use this necessary condition to identify reduced regions of parameters and then compare $\mathbb{E}_\eta v(\mu)$ with $v(\mu_0)$ in each of the regions.
\\
Case (1).
In this case, the reinsurer's expected loss under the prior belief is $v(\mu_0)=-\frac{\kappa}{1+\kappa}(\frac{q_0}{t_1}+\frac{1-q_0}{t_2})$.
On the other hand, we aim to minimize $\mathbb{E}_\eta v(\mu)$ by choosing $\eta$ supported on $\{\mu', \mu''\}$ with probabilities $(\gamma, 1-\gamma)$.
By Bayes-plausibility, we have $q_0=\gamma q' +(1-\gamma)q''$, which leads to 
\begin{equation*}
    \begin{aligned}
        \mathbb{E}_\eta v(\mu)
        &= \gamma v(\mu')+(1-\gamma)v(\mu'')\\
        &= \gamma \Big[\frac{-\kappa}{1+\kappa}\Big(\frac{q'}{t_1}+\frac{1-q'}{t_2}\Big)\Big]
        +
        (1-\gamma) \Big[\frac{-\kappa}{1+\kappa}\Big(\frac{q''}{t_1}+\frac{1-q''}{t_2}\Big)\Big]
        \\
        &=
        \frac{-\kappa}{1+\kappa}\Big( \frac{q_0}{t_1}+\frac{1-q_0}{t_2} \Big).
    \end{aligned}
\end{equation*}
Thus, due to linearity of $v(\cdot)$, if the two beliefs $\mu'$ and $\mu''$ both leads to the positive loss scenario under both states, \ie, the revised confidence levels under both beliefs exceed the threshold $\frac{\kappa}{1+\kappa}$ in both states, then the sender cannot strictly benefit.
This indicates that the optimal average loss that the reinsurer can obtain by choosing a distribution of posterior beliefs $\eta$ is equal to the loss that he receives without performing persuasion. 
In fact, the reinsurer can be worse-off if he performs signaling due to the following reasons.
From Lemma \ref{lemma:nu is increasing in q} and the fact that a revised preference  $\alpha_t$ is strictly increasing in both $\nu$ and $t$, we observe by $q'<q_0<q''$ that the revised preference $\alpha'_{t_1}$ induced by belief $\mu'$ in state $t_1$ is the smallest among all of the four preferences induced by $\mu'$ and $\mu''$ at the two states. 
Thus, by (\ref{eq:value is 0 or + based on alpha}), we have to guarantee that $\alpha'_{t_1}>\frac{\kappa}{1+\kappa}$ in order to prevent its corresponding loss to become $0$.
Then, the condition $\alpha'_{t_1}>\frac{\kappa}{1+\kappa}$ leads to $\nu\leq \frac{ln(1+\kappa)}{t_1}$ with $\nu$ being the solution to $q'e^{-t_1\nu}+(1-q')e^{-t_2\nu}=1-\alpha$.
By Lemma \ref{lemma:nu is increasing in q}, $q'$ is upper-bounded as $\nu$ is also upper-bounded and the bound is given by choosing $\nu=\frac{ln(1+\kappa)}{t_1}$.
Then, we arrive at $q' \geq \Bar{q}$.
\\
Case (2).
In this case, the reinsurer's expected loss under the prior belief is $v(\mu_0)=\frac{-\kappa(1-q_0)}{t_2(1+\kappa)}$.
Due to the discussions in Case (1), the minimum necessary condition for the reinsurer to benefit from persuasion is that the revised preference induced by the higher belief $\mu''$ under state $t_1$ exceeds the threshold, \ie,  $\alpha_{t_1}''>\frac{\kappa}{1+\kappa}$.
This makes the corresponding cost jumps from $0$ to $\frac{1}{t_1(1+\kappa)}>0$.
Thus, following a similar procedure as in Case (1), we must have that $q''>\Bar{q}$.
However, this is not sufficient, as the lower belief $\mu'$ could induce a revised preference that is too low even under state $t_2$, \ie, $\alpha_{t_2}'<\frac{\kappa}{1+\kappa}$, such that the original positive loss under $t_2$ could jump to $0$.
Consequently, if $\alpha_{t_1}''>\frac{\kappa}{1+\kappa}$ and $\alpha_{t_2}'>\frac{\kappa}{1+\kappa}$, the reinsurer strictly benefit from persuasion.
Note that by Lemma \ref{lemma:q underline < q bar}, when $q''>\Bar{q}$ is satisfied, it is required that the condition on $q'$ holds simultaneously with the condition on $q''$  to enable the corresponding requirements on the revised preferences $\alpha_{t_2}'$ and $\alpha_{t_1}''$.
\\
If, on the other hand, we have $\alpha_{t_1}''>\frac{\kappa}{1+\kappa}$ and $\alpha_{t_2}'<\frac{\kappa}{1+\kappa}$, we need to compare losses explicitly. 
Now, the beliefs induce expected losses $v(\mu')=0$ and $v(\mu'')=\frac{-\kappa}{1+\kappa}\Big(\frac{q''}{t_1}+\frac{1-q''}{t_2}\Big)$.
Then, 
\begin{equation*}
\begin{aligned}
    \mathbb{E}_\eta v(\mu)&=\frac{-\kappa}{1+\kappa}(1-\gamma)\Big(\frac{q''}{t_1}+\frac{1-q''}{t_2}\Big)
    \\
    &=\frac{-\kappa}{1+\kappa}\cdot
    \underbrace{\Big( \frac{q_0-q'}{q''-q'}\Big)\cdot\Big(\frac{q''}{t_1}+\frac{1-q''}{t_2}\Big)}_{:=g(q',q'')},
\end{aligned}
\end{equation*}
where the last equality is a consequence of Bayes-plausibility.
So, minimizing $\mathbb{E}_\eta v(\mu)$ is equivalent to maximizing $g(q',q'')$ under the constraints of this case.
The partial derivative of $g$ satisfies $\partial g(q',q'')/ \partial q'<0$.
Hence, to maximize $g$, we pick $q'=0$.
Then, we can verify that $g(0, q'')$ is decreasing in $q''$, leading to picking $q''=\Bar{q}$.
This yields $\gamma=1-q_0/\Bar{q}$ and the expected loss induced by $\eta$ is $\mathbb{E}_{\eta}v(\mu)=\frac{q_0}{t_1}+\frac{q_0(1-\Bar{q})}{\Bar{q}t_2}$.
This loss value is strictly greater than $v(\mu_0)=\frac{-\kappa (1-q_0)}{t_2(1+\kappa)}$ under the assumption that $q_0<\Tilde{q}$.
Therefore, the reinsurer cannot strictly benefit if $\alpha_{t_1}''>\frac{\kappa}{1+\kappa}$ and $\alpha_{t_2}'<\frac{\kappa}{1+\kappa}$.
This concludes the analysis in Case (2).
\\
Case (3). 
In this case, the reinsurer's expected loss under the prior belief is $v(\mu_0)=0$.
From previous discussions, we know that $\alpha_{t_2}''$ is the largest among all of the four revised preferences induced by beliefs $\mu'$ and $\mu''$ under the two states.
Therefore, as long as it exceeds the threshold, the reinsurer strictly benefits from persuasion.
The condition for this to happen is $\alpha_{t_2}''>\frac{\kappa}{1+\kappa}$, which is equivalent to requiring $q''>\underline{q}$.

\endproof

The next result summarizes the optimal values of $\mathbb{E}_\eta v(\mu)$ and the signal rules that induce these values under the cases in which the reinsurer benefit from persuasion shown in Theorem \ref{thm:conditions for benefiting reinsurer}.

\begin{theorem}
\label{thm:optimal value in beneficial cases}
Let signal space be $S=\{s', s''\}$ where $s'$ recommends $\mu'$ and $s''$ recommends $\mu''$.
The optimal expected loss of the reinsurer in
Case (2) is $\frac{1}{t_1}\Big(q_0-\underline{q}\frac{\Bar{q}-q_0}{\Bar{q}-\underline{q}} \Big)$, which is enabled by signal rule $\pi(s'|t_1)=\gamma \underline{q}$, $\pi(s''|t_1)=(1-\gamma)\Bar{q}$, $\pi(s'|t_2)=\gamma(1-\underline{q})$, and $\pi(s''|t_2)=(1-\gamma)(1-\Bar{q})$; in Case (3), the loss is $\frac{q_0}{t_1}+\frac{q_0(1-\Bar{q})}{\Bar{q}t_2}$, which is enabled by signal rule $\pi(s'|t_1)=0$, $\pi(s''|t_1)=(1-\gamma)\Bar{q}$, $\pi(s'|t_2)=\gamma$, and $\pi(s''|t_2)=(1-\gamma)(1-\Bar{q})$.
\end{theorem}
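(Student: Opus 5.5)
The plan is to reduce the reinsurer's problem to a one-dimensional concavification over $q\in[0,1]$, using the piecewise structure of $v$ from (\ref{eq:value is 0 or + based on alpha}) together with the thresholds $\underline{q}<\Bar{q}$ (Lemma \ref{lemma:q underline < q bar}).

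\textbf{Step 1: the piecewise form of $v$.} By Lemma \ref{lemma:revised preferences under all alpha} and Lemma \ref{lemma:nu is increasing in q}, each revised level $\alpha_t=1-e^{-t\nu}$ is increasing in $q$. The threshold crossings are therefore single points.
\begin{itemize}
\item $\alpha_{t_2}>\frac{\kappa}{1+\kappa}$ exactly when $q>\underline{q}$.
\item $\alpha_{t_1}>\frac{\kappa}{1+\kappa}$ exactly when $q>\Bar{q}$.
\end{itemize}
This yields
\begin{equation*}
v(q)=\begin{cases}
0, & q<\underline{q},\\
-\frac{\kappa}{1+\kappa}\frac{1-q}{t_2}, & \underline{q}<q<\Bar{q},\\
-\frac{\kappa}{1+\kappa}\left(\frac{q}{t_1}+\frac{1-q}{t_2}\right), & q>\Bar{q}.
\end{cases}
\end{equation*}
The function $v$ is affine on each of the three pieces and jumps downward at $\underline{q}$ and at $\Bar{q}$. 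At the breakpoints the sender-preferred selection, in the spirit of (\ref{eq:sender preferred selection}), lets us take the lower value, so $\hat v$ is lower semicontinuous.

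\textbf{Step 2: the optimal value is the convex envelope.} The sender's optimal value is $V(q_0)$, the convex envelope of $\hat v$ evaluated at the prior. With two states, the corollary of Section \ref{sec:analysis:multiplicity} and the argument of Section \ref{sec:signal space} say two posteriors $q'<q_0<q''$ suffice. On each affine piece, splitting the prior inside the piece gains nothing, exactly as in Case (1) of Theorem \ref{thm:conditions for benefiting reinsurer}. So the envelope is obtained from chords whose endpoints lie in the set $\{0,\underline{q},\Bar{q},1\}$, taking the lower one-sided limits at the jump points.

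\textbf{Step 3: compare the candidate chords.}
\begin{itemize}
\item \emph{Case (2), $\underline{q}<q_0<\Bar{q}$.} By Theorem \ref{thm:conditions for benefiting reinsurer}, beneficial splittings need $q'\ge\underline{q}$ and $q''\ge\Bar{q}$. Its analysis also rules out $q'<\underline{q}$, using the assumption $q_0<\Tilde q$. The remaining objective is linear in $q'$ and $q''$ after substituting $\gamma=\frac{q''-q_0}{q''-q'}$. Checking its partial derivatives should push the posteriors to the jump points: $q'=\underline{q}$ and $q''=\Bar{q}$, with $\gamma=\frac{\Bar{q}-q_0}{\Bar{q}-\underline{q}}$.
\item \emph{Case (3), $q_0<\underline{q}$.} Here $v(q_0)=0$. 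The candidates are chords from $q'=0$ to $q''\in\{\underline{q},\Bar{q},1\}$. The slope comparison should be settled by the assumption $t_2>t_1(1/\underline{q}-1/\Bar{q})$, which favors $q''=\Bar{q}$. This gives $\gamma=1-q_0/\Bar{q}$, matching the computation in Case (2) of the proof of Theorem \ref{thm:conditions for benefiting reinsurer}.
\end{itemize}
In both cases, I would then evaluate $\gamma\hat v(q')+(1-\gamma)\hat v(q'')$ and simplify it to the stated closed forms, tracking the sign convention the paper uses for the loss.

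\textbf{Step 4: recover the signal rule.} The signal rule follows from (\ref{eq:optimal signal rule}), $\pi(s|t)\propto\mu_s(t)\eta_s$. For example, $\pi(s''|t_1)$ is proportional to $(1-\gamma)\Bar{q}$. I would state the rule in the unnormalized form the statement uses and check that Bayes-plausibility, $\gamma q'+(1-\gamma)q''=q_0$, holds.

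\textbf{Main obstacle.} The hardest part is Step 3: proving that the chosen chord is actually the lowest one. This means comparing slopes across pieces and verifying that the simplifying assumptions on $q_0$ and $t_2$ give exactly the needed inequalities. I would first write each candidate's value as an explicit function of $q_0$ and compare them term by term.
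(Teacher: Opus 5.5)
Your overall route is the paper's. In Case (2) you use Theorem \ref{thm:conditions for benefiting reinsurer} to restrict to $q'\ge\underline{q}$, $q''\ge\Bar{q}$ and push both posteriors to the breakpoints; the paper does this by maximizing $(1-\gamma)q''$. In Case (3) you use $t_2>t_1(1/\underline{q}-1/\Bar{q})$ to select the chord from $0$ to $\Bar{q}$; the paper maximizes $g(q',q'')$. Your reading of that hypothesis as the slope comparison $\hat v(\Bar{q})/\Bar{q}<\hat v(\underline{q})/\underline{q}$ is correct. So is your use of sender-preferred tie-breaking to make the breakpoints attained (the paper imposes strict inequalities and then plugs in boundary values), and your remark that the stated rule is the joint law $\mu_s(t)\eta_s$ rather than $\pi(s|t)$. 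However, your last simplification cannot land on the stated closed forms. With $\beta:=\frac{\kappa}{1+\kappa}$, your chords have values $-\beta\big[\frac{1-q_0}{t_2}+\frac{1}{t_1}\big(q_0-\underline{q}\frac{\Bar{q}-q_0}{\Bar{q}-\underline{q}}\big)\big]$ and $-\beta\big(\frac{q_0}{t_1}+\frac{q_0(1-\Bar{q})}{\Bar{q}t_2}\big)$. The statement instead reports the optimal values of the reduced objectives $(1-\gamma)q''/t_1$ and $g$. No sign convention supplies the missing factor $\beta$ or the missing term $\frac{1-q_0}{t_2}$.

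The more serious gap is in Step 1: the three-piece picture with $0<\underline{q}<\Bar{q}<1$ never occurs. At $q=0$, (\ref{eq:nu satisfy this eq}) gives $e^{-t_2\nu}=1-\alpha$, so $\alpha_{t_2}=\alpha$; at $q=1$ it gives $\alpha_{t_1}=\alpha$. Hence $\alpha_{t_1}(q)\le\alpha\le\alpha_{t_2}(q)$ for all $q$, and directly from the definitions, $\underline{q}>0\iff 1-\alpha>\frac{1}{1+\kappa}\iff\Bar{q}>1$.

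In Case (3) we have $\underline{q}>q_0\ge 0$, so $\Bar{q}>1$ is not a belief. The stated rule then has $\pi(s''|t_2)=(1-\gamma)(1-\Bar{q})<0$, and the top piece of $\hat v$ is empty. When $\underline{q}<1$, the envelope at $q_0$ is the chord from $0$ to $\underline{q}$, with value $-\beta\,q_0(1-\underline{q})/(\underline{q}\,t_2)$, so the hypothesis on $t_2$ plays no role.

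Apart from the knife edge $\alpha=\frac{\kappa}{1+\kappa}$, where $\underline{q}=0$ and $\Bar{q}=1$, Case (2) has either $\underline{q}<0$ or $\Bar{q}>1$. If $\underline{q}<0$, then $\pi(s'|t_1)=\gamma\underline{q}<0$, and the true optimum is $-\beta\big(\frac{1-q_0}{t_2}+\frac{q_0}{t_1}\big)$, attained e.g. by full disclosure. If $\Bar{q}>1$, no $q''>\Bar{q}$ exists and persuasion cannot help.

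So the missing step is checking that your chord endpoints are actual beliefs, and carrying it out shows that the statement fails as written outside that knife edge. The paper's own proof has the same defect: it substitutes $q'=\underline{q}$ and $q''=\Bar{q}$ without verifying $\underline{q}\ge 0$ or $\Bar{q}\le 1$.
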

\proof{Proof of Theorem \ref{thm:optimal value in beneficial cases}.}
Case (2).
In this case, the reinsurer only strictly benefit if $q''>\Bar{q}$ and $q'>\underline{q}$ hold simultaneously.
Accordingly, expected losses are
\begin{equation*}
    v(\mu')=\frac{-\kappa(1-q')}{t_2(1+\kappa)},
    \text{ and }v(\mu'')=\frac{-\kappa}{1+\kappa}\Big[ \frac{q''}{t_1}+\frac{1-q''}{t_2} \Big].
\end{equation*}
Then, distribution of beliefs $\eta$ induces
\begin{equation*}
    \mathbb{E}_{\eta}v(\mu)
    =\frac{-\kappa}{1+\kappa}\Big[ \gamma \frac{1-q'}{t_2}+(1-\gamma)\Big(\frac{q''}{t_1}+\frac{1-q''}{t_2}\Big) \Big].
\end{equation*}
Using Bayes-plausibility, \ie, $\gamma q'+(1-\gamma)q''=q_0$, the above expression becomes
\begin{equation*}
    \mathbb{E}_{\eta}v(\mu)
    =\frac{-\kappa}{1+\kappa}\Big[ \frac{1-q_0}{t_2}+(1-\gamma)\frac{q''}{t_1} \Big].
\end{equation*}
Then, minimizing  $\mathbb{E}_{\eta}v(\mu)$ is equivalent to maximizing $(1-\gamma)q''$, under the constraints that $q''>\Bar{q}$, $q'>\underline{q}$, and Bayes-plausibility. 
To arrive at the optimal choice, we pick $q'=\underline{q}$ and obtain $q''=\frac{q_0-\gamma\underline{q}}{1-\gamma}$.
Since $q''>\Bar{q}$, we have $\frac{q_0-\gamma \underline{q}}{1-\gamma}>\Bar{q}$.
By Lemma \ref{lemma:q underline < q bar}, we obtain $\gamma>\frac{\Bar{q}-q_0}{\Bar{q}-\underline{q}}$.
Then, if $\Bar{q}>q_0$, the optimal choice is $\gamma=\frac{\Bar{q}-q_0}{\Bar{q}-\underline{q}}$.
Indeed, since Case (2) requires that $\alpha_{t_1}<\frac{\kappa}{1+\kappa}$, if we solve for $q_0$ using an analogue of (\ref{eq:nu satisfy this eq}) with $q_0$ at the boundary the boundary of this condition, then we would obtain $q_0=\Bar{q}$.
Consequently, it always holds in this case that $\Bar{q}>q_0$.
Therefore, optimal choices to minimize $\mathbb{E}_\eta v(\mu)$ are $q'=\underline{q}$ and $\gamma=\frac{\Bar{q}-q_0}{\Bar{q}-\underline{q}}$, and correspondingly, $q''=\frac{q_0-\gamma \underline{q}}{1-\gamma}$.
The signal rule that induces the optimal loss can be reconstructed using (\ref{eq:optimal signal rule}), which, following the procedure illustrated in Section \ref{sec:example}, is obtained as stated in the theorem.
\\
Case (3).
In this case, under the assumption that $t_2>t_1(1/\underline{q}-1/\Bar{q})$, the reinsurer benefits the most if $v(\mu'')=\frac{-\kappa}{1+\kappa}\Big( \frac{q''}{t_1}+\frac{1-q''}{t_2} \Big)$.
Note that this is compared with the scenario where the loss induced by $\alpha_{t_1}''$ is $0$, \ie, the $q''/t_1$ element in $v(\mu'')$ becomes $0$.
To obtain $v(\mu'')$, the requirement on $q''$ is $q''>\Bar{q}$, which automatically leads to $q''>\underline{q}$.
Then, according to the proof of Theorem \ref{thm:conditions for benefiting reinsurer}, maximizing $\mathbb{E}_\eta v(\mu)$ is equivalent to maximizing $g(q',q'')$.
Since we require $q''>\Bar{q}$, the optimal choice is $q'=0$ and $q''=\Bar{q}$, which leads to $\gamma=1-q_0/\Bar{q}$.
This leads to the optimal average loss in the theorem and the signal rule can be constructed using (\ref{eq:optimal signal rule}).

\endproof

\section{Conclusion}
\label{sec:conclusion}
We have proposed a Bayesian risk preference persuasion framework to leverage the instability of human risk preferences for risk management. 
This setting has applications, e.g., varying from system security enhancement to financial risk mitigation. 
However, persuasive information does not always benefit the party aiming to design the signal rules and the optimal design of persuasive information may even be nonexistent. 
To gain more insights, we have investigated two distinct problem formulations.
One aims at elaborating the effect of information on average risk preference.
The other focuses on the end-to-end effect of preference persuasion.
Extending existing analysis approaches, we have identified the value of sender's optimal persuasion with the help of convex analysis and characterized the conditions for benefit based on the decomposition of the transportation distance of mixture distributions. 
We have also applied our theoretical framework to a reinsurance design problem and derived in detail when and how preference persuasion assists the reinsurer.

Some of the future research directions include extending the current framework to the setting where multiple risk-averse receivers interact non-cooperatively and the sender aims to perform preference persuasion to design the equilibrium, developing computational tools for solving the preference persuasion problems, and considering dynamic persuasion policies.

\appendix
%\begin{APPENDIX}{}%{<Title of the Appendix>}
\section{Appendix}

\proof{Proof of Lemma \ref{lemma:nu is increasing in q}.}
Let $H(\nu, q):=qe^{-t_1\nu}+(1-q)e^{-t_2\nu}-(1-\alpha)$.
Equation (\ref{eq:nu satisfy this eq}) indicates that $H(\nu, q)=0$.
Then, implicit function theorem tells us that $\nu$ as a function of $q$, satisfies
\begin{equation*}
    \frac{\dd \nu}{\dd q}=-\frac{\partial H/ \partial q}{\partial H/ \partial \nu}=-\frac{e^{-t_1\nu} -e^{-t_2\nu}}{-\big( qt_1e^{-t_1\nu}+(1-q)t_2 e^{-t_2\nu}\big)}.
\end{equation*}
Since $t_1<t_2$ by assumption, we have $\frac{\partial H}{\partial q}>0$ and $\frac{\partial H}{\partial \nu}<0$.
Thus, $\frac{\dd \nu}{\dd q}>0$.
This completes the proof. 

\endproof

\proof{Proof of Lemma \ref{lemma:q underline < q bar}.}
Let $C=1-\alpha$. Then, we have $0<C<1$.
Let $z=\frac{1}{1+\kappa}$. Then, we have $0<z<1$.
Let $r=t_1/t_2$. Then,  we have $0<r<1$.
With these auxiliary terms, we obtain $\underline{q}=\frac{C-z}{z^r-z}$ and $\Bar{q}=\frac{C-z^{1/r}}{z-z^{1/r}}$.
Their different is 
\begin{equation*}
    \underline{q}-\Bar{q}=\frac{(C-z)(z-z^{1/r})-(C-z^{1/r})(z^r-z)}{(z^r-z)(z-z^{1/r})}.
\end{equation*}
Since $0<z<1$ and $0<r<1<1/r$, we have $z^r-z>0$ and $z-z^{1/r}>0$.
So, the sign of $\underline{q}-\Bar{q}$ only depends on its numerator. 
Denote by $\delta$ the numerator of $\underline{q}-\Bar{q}$.
Algebraic manipulation leads to $\delta=\delta(C)=C(2z-z^r-z^{1/r})-(z^2-z^{r+1/r})$, thereby $\delta(C)$ is linear function for $0<C<1$.
Observe that $\delta(0)=-(z^2-z^{r+1/r})$.
As $r+\frac{1}{r}>2\cdot \sqrt{r\cdot \frac{1}{r}}=2$ and $0<x<1$, we have $\delta(0)<0$.
On the other hand, $\delta(1)=(1-z)(z-z^{1/r})-(1-z^{1/r})(z^r-z)$.
We identify $\delta(1)=h(z, r)$ to investigate its sign.
For all $z$, we have $h(z,1)=(1-z)(z-z)-(1-z)(z-z)=0$.
By calculation, $\delta(1)=2z-z^2-z^{1/r}-z^r+z^{r+1/r}$, whose partial derivative with respect to $r$ is
\begin{equation}
\begin{aligned}
    \frac{\partial \delta(1)}{\partial r}&=ln(z)\Big[  \frac{1}{r^2}z^{\frac{1}{r}}-z^r+(1-\frac{1}{r^2})z^{r+\frac{1}{r}}\Big]
    \\
    &=ln(z)\cdot z^r\cdot 
    \Big[ \frac{1}{r^2}z^{\frac{1}{r}-r}-1+(1-\frac{1}{r^2})z^{\frac{1}{r}}\Big]
    \\
    &=:ln(z)\cdot z^r \cdot g(z).
    \label{eq: d delta d r}
\end{aligned}
\end{equation}
If we can show that $\frac{\partial \delta(1)}{\partial r}>0$, then as $h(z,1)=0$, we can obtain $h(z,r)<0$ for $0<r<1$.
As $ln(z)<0$ and $z^r>0$ for $0<z<1$ and $0<r<1$, it suffices to show that $g(z)$ defined in the last equation in (\ref{eq: d delta d r}) is negative on $0<x<1$.
Observe that $\lim_{x\rightarrow  0^+}g(z)=-1$ and $\lim_{x\rightarrow 1^-}=0$.
Then, it suffices to show that $g(z)$ is increasing on $0<z<1$.
Algebraic calculation leads to \begin{equation*}
    g'(z)=\frac{1}{r}z^{\frac{1}{r}-r-1}\cdot(1-\frac{1}{r^2})(z^r-1)>0,
\end{equation*}
which follows since $0<r<1$ and $0<z<1$.
Thus, we have $\delta(1)<0$.
As $\delta(C)$ is linear, $\delta(0)<0$, and $\delta(1)<0$, we conclude that $\underline{q}<\Bar{q}$.

\endproof

%\end{APPENDIX}

%Bibliography
\bibliographystyle{abbrvnat}  
\bibliography{references}  

\nocite{*}

\end{document}